\documentclass{amsart}

\usepackage[dvips]{graphics}
\usepackage{epsfig}
\usepackage{amssymb}
\usepackage{amscd}
\usepackage[mathscr]{eucal}
\usepackage{enumitem}
\usepackage{url}
\usepackage{color}

\usepackage{hyperref}

\newtheorem{theorem}{Theorem}
\newtheorem{proposition}[theorem]{Proposition}
\newtheorem{corollary}[theorem]{Corollary}
\newtheorem{lemma}[theorem]{Lemma}
\newtheorem{fact}[theorem]{Fact}

\newtheorem{definition}[theorem]{Definition}

\newtheorem{remark}[theorem]{Remark}

\newtheorem{question}[theorem]{Question}

\newtheorem{example}[theorem]{Example}
\newtheorem{rem/def}[theorem]{Remark/Definition}
\newtheorem{notation}[theorem]{Notation}

\numberwithin{theorem}{section}

\providecommand{\Th}{\operatorname{Th}}
\providecommand{\pr}{\operatorname{pr}}
\providecommand{\ch}{\mathop{char}}

\def\CC{{\mathcal C}}

\def\CL{{\mathcal L}}

\def\CR{{\mathcal R}}

\def\BN{{\mathbb N}}
\def\BZ{{\mathbb Z}}
\def\BQ{{\mathbb Q}}
\def\BR{{\mathbb R}}
\def\BC{{\mathbb C}}
\def\BF{{\mathbb F}}
\def\fm{\mathfrak{m}}

\def\dnu{\dot{\nu}}

\def\cfd{K^{\circ}}
\def\cnu{\nu^{\circ}}

\def\Hom{\mathrm{Hom}}

\def\id{\operatorname{Id}}
\def\ov{\overline}
\def\wi{\widetilde}
\def\L{\operatorname{L}}
\def\ob{\operatorname{Ob}}
\def\mor{\operatorname{Mor}}

\def\Th{\operatorname{Th}}

\subjclass[2010]{11U09 (primary)  13L05 (secondary) }

\keywords{finitely ramified valued fields, functorial property of the ring of Witt vectors, Krasner's lemma, lifting number, Ax-Kochen-Ershov prinicple}

\thanks{The first author was supported by Samsung Science Technology Foundation under Project Number SSTF-BA1301-03. 
The second author was supported by the Yonsei University Research Fund (Post Doc. Researcher Supporting Program) of 2017 (project no: 2017-12-0026).
He was also supported by the National Research Foundation of Korea (NRF) grant funded by the Korea government (MSIT) (No. 2019R1A2C1088609).
\\
\; The authors thank the anonymous referee for valuable comments and suggestions, which were very helpful to reorganize our paper more effectively. 
The authors thank Piotr Kowalski for helpful comments. 
Most of all, the authors thank Thomas Scanlon for detailed and valuable suggestions and comments, which encouraged us to keep writing this article.}

\begin{document}
\setlength{\baselineskip}{12 pt}
\title{On the structure of certain valued fields}

\author[J. Lee]{Junguk Lee}

\address{Instytut Matematyczny, Uniwersytet Wrocławski, pl. Grunwaldzki 2/4, 50-384 Wrocław, Poland
\newline \indent {\em Current address}: Department of Mathematical Sciences, KAIST, 291, Daehak-Ro, Yuseong-Gu, Daejeon, 34141, Republic of Korea}
\email{ljwhayo@kaist.ac.kr}

\author[W. Lee]{Wan Lee}
\address{
Department of Mathematics Yonsei University, 134 Sinchon-Dong, Seodaemun-Gu, Seoul, 120-749, Republic of Korea
\newline \indent {\em Current address}: Department of Mathematical Science, Ulsan National Institute of Science and Technology, Unist-gil 50, Ulsan 44919, Republic of Korea }

\email{wannim@unist.ac.kr}

\renewcommand{\abstractname}{Abstract}

\begin{abstract}
In this article, we study the structure of finitely ramified mixed characteristic valued fields. For any two complete discrete valued fields $K_1$ and $K_2$ of mixed characteristic with perfect residue fields, we show that if the $n$-th residue rings are isomorphic for each $n\ge 1$, then $K_1$ and $K_2$ are isometric and isomorphic.
More generally,
for $n_1\ge 1$,
 there is $n_2$ depending only on the ramification indices of $K_1$ and  $K_2$ such that any homomorphism from the $n_1$-th residue ring of $K_1$ to the $n_2$-th residue ring of $K_2$ can be lifted to a homomorphism between the valuation rings.
Moreover, we get a functor from the category of certain principal Artinian local rings of length $n$ to the category of certain complete discrete valuation rings of mixed characteristic with perfect residue fields, which naturally generalizes the functorial property of unramified complete discrete valuation rings. Our lifting result improves Basarab's relative completeness theorem for finitely ramified henselian valued fields, which solves a question posed by Basarab, in the case of perfect residue fields.
\end{abstract}

  \maketitle
\section{Introduction}
In this paper, we are interested in finitely ramified mixed characteristic valued fields (see Definition \ref{ramfication_index}).
In model theory of valued fields, one of the most important theorems is the AKE-principle, proved by Ax and Kochen in \cite{AK2, AK3}, and independently by Ershov in \cite{E2,E3}. The AKE-principle says that the theory of an unramified henselian valued field of characteristic $0$ is determined by the theory of the residue field and the theory of the value group.
\begin{fact}[The Ax-Kochen-Ershov principle]\label{AKE}\cite{AK2,AK3,E2,E3}
Let $(K_i,k_i,\Gamma_i)$ be an unramified henselian valued field of characteristic zero, where $k_i$ is the residue field and $\Gamma_i$ is the valuation group respectively, for $i=1,2$.
\begin{center}
$K_1\equiv K_2$ if and only if $k_1\equiv k_2$ and $\Gamma_1\equiv \Gamma_2$.
\end{center}
\end{fact}
\noindent Basarab in \cite{B1} generalized the AKE-principle to the finitely ramified case. Actually, he showed that the theory of a finitely ramified henselian valued fields of mixed characteristic is determiend by the theory of each $n$-th residue ring (see Definition \ref{nth_residuering}),  the quotient of the valuation ring by the $n$-th power of the maximal ideal and the theory of the valuation group.
\begin{fact}\label{AKE_Rn}\cite{B1}
Let $(K_i,R_{i,(n)},\Gamma_i)$ be finitely ramified henselian valued fields of mixed characteristic, where $R_{i,(n)}$ is the $n$-th residue ring and $\Gamma_i$ is the valuation group respectively for $i=1,2$. The following are equivalent:
\begin{enumerate}
	\item $K_1\equiv K_2$.
	\item $R_{1,(n)}\equiv R_{2,(n)}$ for each $n\ge 1$ and $\Gamma_1\equiv \Gamma_2$.
\end{enumerate}
\end{fact}

\noindent Motivated by Fact \ref{AKE_Rn}, we ask the following related question on isomorphisms.
\begin{question}\label{motivation_1}
Given two complete discrete valued fields $K_1$ and $K_2$ of mixed characteristic with perfect residue fields, if the $n$-th residue rings of $K_1$ and $K_2$ are isomorphic for each $n\ge 1$, then are $K_1$ and $K_2$ isomorphic\textup{?} Moreover, is there $N>0$ such that $K_1$ and $K_2$ are isomorphic if the $N$-th residue rings of $K_1$ and $K_2$ are isomorphic\textup{?}
\end{question}
\noindent
We give a comment on Question \ref{motivation_1}. Macintyre in \cite{V} raised the following question on the problem of lifting of homomorphisms of the $n$-th residue rings for more general  rings.
\begin{question}\label{modtivation_1'}
Are two complete local noetherian rings $A$ and $B$ isomorphic if the $n$-th residue rings of $A$ and $B$ are isomorphic for each $n\ge 1$\textup{?}
\end{question}
\noindent In \cite{V}, van den Dries gave a positive answer to Question \ref{modtivation_1'} in the case that the residue fields are algebraic over their prime fields. Furthermore, given complete local noetherian rings $A$ and $B$, it is enough to check whether the $N$-th residue rings of $A$ and $B$ are isomorphic for some $N=N(A,B)$ depending on $A$ and $B$. Note that van den Dries showed the existence of a non explicit bound $N$, and in general, there is a counter example by Gabber in \cite{V} for Question \ref{modtivation_1'}.\\

Next we recall the following well-known fact on unramified complete discrete valuation rings.
\begin{fact}\label{Witt_classic}\cite{S}
\begin{enumerate}
\item
Let $k$ be a perfect field of characteristic $p$. Then there exists a  complete discrete valuation ring of characteristic $0$ which is unramified and has $k$ as its residue field. Such a ring is unique up to isomorphism. This unique ring is called the ring of Witt vectors of $k$, denoted by $W(k)$.

\item
Let $R_1 $ and $R_2$ be complete discrete valuation rings of mixed characteristic with perfect residue fields $k_1$ and $k_2$ respectively. Suppose $R_1$ is unramified. Then for every homomorphism $\phi : k_1 \longrightarrow k_2 $, there exists a unique homomorphism $g:R_1 \longrightarrow R_2 $ making the following diagram commutative:
$$
\begin{CD}
R_1 @> {g} >>R_2 \\
@V{\pr_{1,1}} VV @V{\pr_{2,1}} VV\\
k_1 @> \phi >> k_2  ,
\end{CD}
$$
 where two vertical maps are the canonical epimorphisms.
\end{enumerate}
\end{fact}

\noindent In categorical setting, Fact \ref{Witt_classic} is equivalent to the following statement.
\begin{fact}\label{Witt_modern}
Let $\CC_p$ be the category of complete unramified discrete valuation  rings of mixed characteristic $(0,p)$ with perfect residue fields and $\CR_p$ the category of perfect fields of characteristic $p$. Then $\CC_p$ is equivalent to $\CR_p$. More precisely, there is a functor $\L':\ \CR_p\rightarrow\CC_p$ which satisfies:
 \begin{itemize}
 \item
 $\Pr \circ\L'$ is equivalent to the identity functor $\id_{\CR_p} $ where  $\Pr:\CC_p\longrightarrow \CR_p$ is the natural projection functor.
  \item
  $ \L' \circ\Pr$ is equivalent to $\id_{\CC_p}$.
  \end{itemize}
\end{fact}
\noindent Based on Question \ref{motivation_1} and Fact \ref{Witt_modern}, we ask the following generalized questions for the finitely ramified case.

\begin{question}\label{motivation_2}
\begin{enumerate}
\item
For a principal Artinian local ring $\ov{R}$ of length $n$ with a perfect residue field, is there a unique complete discrete valuation ring $R$ which has $\ov{R}$ as its $n$-th residue ring\textup{?} Moreover, if it has a positive answer, can   a lower bound for such $n$ be effectively computed in terms of the ramification index of $\ov{R}$\textup{?}
\item
Given complete discrete valuation rings $R_1$ and $R_2$ of mixed characteristic with perfect residue fields, let $R_{1, (n_1)}$ and $R_{2, (n_2)}$ be the $n_1$-th residue ring of $R_1$ and the $n_2$-th residue ring of $R_2$ respectively.
If $n_1$ and $n_2$ are large enough, is there a unique lifting homomorphism $g:R_1\longrightarrow R_2$ such that $g$ induces a given homomorphism $\phi:R_{1,(n_1)}\longrightarrow R_{2, (n_2)}$\textup{?} Moreover, can such lower bounds on $n_1$ and $n_2$  be effectively computed in terms of the  ramification indices of $R_1$ and $R_2$\textup{?}
\end{enumerate}

\end{question}

\begin{question}\label{motivation_3}
Let $\CC_{p,e}$ be the category of complete discrete valuation rings of mixed characteristic $(0,p)$ with perfect residue fields and ramification index $e$. For $n>e$, let $\CR_{p,e}^n$ be the category of principal Artinian local rings of length $n$ having ramification index $e$ and perfect residue fields (see at the beginning of \emph{Section \ref{Functoriality}} for the precise definition).
Let $\Pr_n:\CC_{p,e}\longrightarrow \CR_{p,e}^n$ be the natural projection functor. Is there a lifting functor $\L:\CR_{p,e}^n\longrightarrow \CC_{p,e}$ which satisfies:
\begin{itemize}
 \item
 $\Pr_n \circ\L$ is equivalent to $\id_{\CR_{p,e}^n} $.
  \item
  $ \L \circ\Pr_n$ is equivalent to $\id_{\CC_{p,e}}$.
  \end{itemize}
\end{question}

\noindent
In general, the answer for Question \ref{motivation_2}.(2) is not positive, that is, there is a homomorphism $\phi:R_{1,n_1} \longrightarrow R_{2,n_2} $ such that no homomorphism from $R_1$ into $ R_2$ induces $\phi$ (see Example \ref{example_tame optimal}).
Instead of finding a `usual' lifting in the sense of Question \ref{motivation_3}, we will show that for sufficiently large $n_2$, if there is a given homomorphism $\phi: R_{1, (n_1)}\longrightarrow R_{2, (n_2)}$, then there is an `approximate' lifting $g:R_1\longrightarrow R_2$ of $\phi$ (see Definition \ref{def:lifting_homomorphim}). \\

Let us come back to the question of elementary equivalence. In \cite{B1}, Basarab posed the following question (see \cite[page 23-24]{B1}):
\begin{question}\label{Basarab_rough_question}
For a finitely ramified henselian valued field $K$ of ramification index $e$, is there a finite integer $N'\ge 1$ depending on $K$ such that any finitely ramified henselian valued field of the same ramification index $e$ is elementarily equivalent to $K$ if their $N'$-th residue rings are elementarily equivalent and their value groups are elementarily equivalent\textup{?}
\end{question}
\noindent Given a finitely ramified henselian valued field $K$, Basarab in \cite{B1} denoted the minimal number $N'$, which satisfies the condition in Question \ref{Basarab_rough_question}, by  $\lambda(T)$ for the complete theory $T$ of $K$.  He showed that $\lambda(T)$ for a local field $K$ is finite but did not give any explicit value of $\lambda(T)$.\\

\medskip

The goal of this paper is to answer these questions when the residue fields are perfect. Its organization is as follows. In Section \ref{preliminaries}, we recall basic definitions and facts.
In Section \ref{LiftHom}, we answer Question \ref{motivation_1} positively for the perfect residue field case in Theorem \ref{main_theorem_construction}.
Our main result shows that if $n_2$ is sufficiently large, then for a given homomorphism $\phi: R_{1, (n_1)}\longrightarrow R_{2, (n_2)}$, there is a homomorphism  $\L(\phi):R_1\longrightarrow R_2$ satisfying a lifting property similar to that of the unramified case.
This provides an answer for Question \ref{motivation_1}. Also, the lifting map $\L$ provides an answer for Question \ref{motivation_2}.(2) and Question \ref{motivation_2}.(1).
In Section \ref{Functoriality}, we concentrate on Question \ref{motivation_3}.
We can show that $\L$ is compatible with the  composition of homomorphisms between residue rings.
More precisely, $\L(\phi_2 \circ \phi_1) =\L(\phi_2)\circ\L(\phi_1)$ for any $\phi_1:R_{1, (n_1)}\longrightarrow R_{2, (n_2)}$ and $\phi_2:R_{2, (n_2)} \longrightarrow R_{3,(n_3)}$.
This defines a functor $\L:\CR_{p,e}^n \longrightarrow \CC_{p,e}$ for sufficiently large $n$.
We prove that a lower bound for $n$ depends only on the ramification index $e$ and the prime number $p$.
Even though $\L$ does not give an equivalence between $\CR_{p,e}^n$ and $\CC_{p,e}$, it turns out that $\L$ satisfies a similar functorial property to that of $\L':\ \CR_p\rightarrow\CC_p$.
This provides an answer for Question \ref{motivation_3}. In Section \ref{AKE_Basarab}, we reduce the problem on elementary equivalence between finitely ramified henselian valued fields of mixed characteristic to the problem on isometricity between complete discrete valued fields of mixed characteristic.
Using results in Section \ref{Crietrion_isom}, we improve Basarab's result on the AKE-principle which gives a positive answer to Question \ref{Basarab_rough_question} when the residue fields are perfect.
Under certain conditions, we calculate $\lambda(T)$ explicitly for the tame case and get a lower bound for $\lambda(T)$ for the wild case.
Surprisingly we show that $\lambda(T)$ can be $1$ even when $K$ is not unramified.
As a special case, we conclude that $\lambda(T)$ is $1$ or $e+1$ if $p\nmid e$, and $\lambda(T)\ge e+1$ if $p \mid e$ when $K$ is a finitely ramified henselian subfield of $\BC_p$ with the ramification index $e$.

\section{Preliminaries}\label{preliminaries}
In this section, we introduce basic notations, terminologies, and several preliminary facts which will be used in this paper. We denote a valued field by a tuple $(K,R,\fm,\nu,k,\Gamma)$ consisting of the following data : $K$ is the underlying field, $R$ is the valuation ring, $\fm$ is the maximal ideal of $R$, $\nu$ is the valuation, $k$ is the residue field, and $\Gamma$ is the value group. Hereafter, the full tuple $(K,R,\fm,\nu,k,\Gamma)$ will be abbreviated in accordance with the situational need for the components. For any field $L$,  $L^{alg}$ denotes a fixed algebraic closure of $L$.

\begin{notation}\label{notation:normalized_valutation}
Let $(L,\nu)$ be a valued field of mixed characteristic $(0,p)$ whose value group is contained in $\mathbb{R}$. We define a normalized valuation $\ov{\nu}$ on $L$ of $\nu$ by the property  $\ov{\nu}(p)=1$, that is, $\nu(p)\ov{\nu}=\nu$. We denote an extended valuation of $\ov{\nu}$ on $L^{alg}$ by $\wi{\nu}$. Note that $\wi{\nu}$ is unique when $L$ is henselian.
\end{notation}

\begin{definition}\label{unramified_valuedfield}
Let $(K,\nu,k,\Gamma)$ be a valued field of characteristic zero. We say $(K,\nu)$ is {\em unramified} if $\ch(k)=0$, or $\ch(k)=p$ and $\nu(p)$ is the minimal positive element in $\Gamma$ for $p>0$. We say $(K,\nu)$ is {\em ramified} if it is not unramified.
\end{definition}

\begin{definition}\label{ramfication_index}
Let $(K,R, \nu,k,\Gamma)$ be a valued field whose residue field has prime characteristic $p$.
\begin{enumerate}
	\item We say $(K,R, \nu,k,\Gamma)$ is {\em finitely ramified} if $K$ is ramified and the set $\{\gamma\in \Gamma|\ 0< \gamma\le \nu(p)\}$ is finite. For $x\in R$, we write $e_{\nu}(x):=|\{\gamma\in \Gamma|\ 0<\gamma\le \nu(x)\}|$. If there is no confusion, we write $e(x)$ for $e_{\nu}(x)$. The number $e_{\nu}(p)$, which is the cardinality of $\{\gamma\in \Gamma|\ 0< \gamma\le \nu(p)\}$, is called {\em the ramification index} of $(K,\nu)$.
	\item Let $(K,R, \nu,k,\Gamma)$ be finitely ramified. If $p$ does not divide $e_{\nu}(p)$, we say $(K,\nu)$ is \emph{tamely ramified}. Otherwise, we say $(K,\nu)$ is \emph{wildly ramified}.
\end{enumerate}
\end{definition}
\noindent Note that if a valued field of mixed characteristic has a finite ramification index, then its value group has a minimum positive element.

\begin{definition}\label{def:totally_ramified}
Let $(R,\nu,k)$ be a complete discrete valuation ring of mixed characteristic with a perfect residue field. Let $(R',\nu',k')$ be a finite extension of $R$. Let $K$ and $K'$ be fraction fields of $R$ and $R'$ respectively. If $k=k'$, we say that $R'$ is a {\em totally ramified extension} of $R$, or $K'$ is a {\em totally ramified extension} of $K$.
\end{definition}

\begin{definition}
Let $(K_1,\nu_1)$ and $(K_2,\nu_2)$ be valued fields. Let $R_1'$ and $R_2'$ be subrings of $K_1$ and $K_2$ respectively. Let $f:R_1'\rightarrow R_2'$ be an injective ring homomorphism. We say $f$ is an {\em isometry} if for $a,b\in R_1'$, $$\nu_1(a)>\nu_1(b)\Leftrightarrow \nu_2(f(a))>\nu_2(f(b)).$$
\end{definition}

\begin{fact}\label{fact:valuationring_homo_isometry}
Let $(R_1,\nu_1)$ and $(R_2,\nu_2)$ be finitely ramified valuation rings of mixed characteristic $(0,p)$ whose value groups are isomorphic to $\BZ$. Let $f:R_1\rightarrow R_2$ be a ring homomorphism. Then we have the following.
\begin{enumerate}
  \item
$f:R_1\rightarrow R_2$ is an isometry.
  \item
Let $K_1$ and $K_2$ be the fraction fields of $R_1$ and $R_2$ respectively.
Then the homomorphism $K_1\longrightarrow K_2$ induced by $f$ is an isometry.
  \item
If both of valuation groups of $R_1$ and $R_2$ are contained in a common ordered abelian group and $\nu_1(p)=\nu_2(p)$, then $\nu_1(x)=\nu_2(f(x))$ for any $x\in R_1$.
\end{enumerate}
\end{fact}
\begin{proof}
(1) We have  $f(n)=n$ for all $n\in \BZ$.
Take $a\in R_1$. Since $f$ sends units to units, $\nu_2(f(a))=0$ if $\nu_1(a)=0$.
To show that $f$ is an isometry, it is enough to show that $\nu_2(f(a))>0$ if $\nu_1(a)>0$.
Suppose $\nu_1(a)>0$. Then there is $k\in R_1^{\times}$ such that $ka^n=p^m$ for some $n,m>0$ since $R_1$ is finitely ramified.
Since $f(p)=p$, we have that $p^m=f(p^m)=f(k)f(a)^n$.
Therefore, we have that $$\nu_1(a)=\frac{m}{n}\nu_1(p),\ \nu_2(f(a))=\frac{m}{n}\nu_2(p)\ (*)$$ and $f$ is injective. Thus, $f$ is an isometry.

(2) This follows directly from (1).

(3) This follows from $(*)$.
\end{proof}

\begin{fact}\label{isometry}
Let $(K_1,\nu_1)$ and $(K_2,\nu_2) $ be valued fields whose value groups are contained in $\BR$.
Let $f:K_1 \longrightarrow K_2$ be an isometry. Suppose $K_1$ is henselian. Let $\widetilde{f} :K_1^{alg}\longrightarrow K_2^{alg}$
be an extended homomorphism of $f$. Then $\widetilde{f}$ is an isometry.
\end{fact}

\begin{proof}
There are two valuations on $\widetilde{f}(K_1^{alg})$, $\widetilde{\nu_1}\circ \widetilde{f}^{-1}$ and $\widetilde{\nu_2}|_{\widetilde{f}(K_1^{alg})}$ where $\widetilde{\nu_2}|_{\widetilde{f}(K_1^{alg})}$ is the restriction of $\widetilde{\nu_2}$ to $\widetilde{f}(K_1^{alg})$.
Since $f$ is an isometry, the restrictions of $\widetilde{\nu_1}\circ \widetilde{f}^{-1}$ and $\widetilde{\nu_2}|_{\widetilde{f}(K_1^{alg})}$ to
$f(K_1)$ are equivalent, in fact, they are equal since  $(\widetilde{\nu_1}\circ \widetilde{f}^{-1})(p)=\widetilde{\nu_2}|_{\widetilde{f} (K_1^{alg})}(p)=1$. Since $K_1$ is henselian,  $f(K_1)$ is Henselian. Hence, $\widetilde{\nu_1}\circ \widetilde{f}^{-1}$ is equal to $\widetilde{\nu_2}|_{\widetilde{f}(K_1^{alg})}$ by the henselian property. This shows that $\widetilde{f}$ is an isometry.
\end{proof}

\begin{definition}\label{nth_residuering}
For a local ring $R$ with maximal ideal $\fm$, we denote $R/\fm^n$ by $R_{(n)}$, and we call $R_{(n)}$ the \emph{$n$-th residue ring} of $R$.
In particular, $R_{(1)}$ is the residue field of $R$.
For each $m>n$, we write $\pr_n:\ R\rightarrow R_{(n)}$ and $\pr^m_n:\ R_{(m)}\rightarrow R_{(n)}$ for the canonical epimorphisms respectively.
\end{definition}
\noindent For $R$-algebras $S_1$ and $S_2$,
we denote the set of $R$-algebra homomorphisms from $S_1$ to $S_2$ by	$\Hom_R(S_1,S_2)$, and we write $\Hom(S_1,S_2)$ for $\Hom_\mathbb{Z}(S_1,S_2)$.

We recall some facts on the structure of finite extensions of unramified complete valued fields.

\begin{fact}\label{witt_embedding}
Let $(R,\nu)$ be a complete discrete valuation ring of mixed characteristic $(0,p)$ with perfect residue field $k$ whose valuation group is $\BZ$. Then
$W(k)$ can be embedded as a subring of $R$ and $R$ is a free $W(k)$-module of rank $\nu(p)$. Moreover, $R$ is a $W(k)$-algebra generated by $\pi$, denoted by $W(k)[\pi]$, where $\pi$ is a uniformizer of $R$.

\end{fact}

\begin{proof}
Chapter 2, Section 5 of \cite{S}
\end{proof}

\begin{fact}\label{Teichmuller}
Let $A$ be a ring that is Hausdorff and complete for a topology defined by a decreasing sequence
$\mathfrak{a}_1 \supset \mathfrak{a}_2 \supset ...$ of ideals such that $\mathfrak{a}_n \cdot \mathfrak{a}_m \subset \mathfrak{a}_{n+m}$.
Assume that the residue ring $A_1=A/\mathfrak{a}_1 $ is a perfect field of characteristic $p$. Then:

\begin{enumerate}
\item
There exists a unique system of representatives $h: A_1 \longrightarrow A$ which commute with $p$-th powers:
$h(\lambda^p ) = h(\lambda)^p $. This system of representatives is called the set of Teichm\"{u}ller representatives.
\item
In order for $ a\in A $ to belong to $S= h(A_1)$, it is necessary and sufficient that $a$ be a $p^n$-th power for all $n\geq 0$.
\item
This system of representatives is multiplicative which means
$$
h( \lambda \mu ) = h(\lambda)h(\mu)
$$
for all $\lambda, \mu \in A_1$.
\item
$S$ contains $0$ and $1$.
 \item
 $S\setminus\{0\}$ is a subgroup of the unit group of $A$.
\end{enumerate}
\end{fact}

\begin{proof}
(1)(2)(3): Chapter 2, Section 4 of \cite{S}

(4): $0$ and $1$ satisfy (2).

(5): (3) and (4) show that $S\setminus\{0\}$ is a subgroup of the unit group of $A$.
\end{proof}

\begin{remark}\label{rem:Teichmuller_nth-residuering}
Let $(R,\fm)$ be a complete discrete valuation ring of mixed characteristic $(0,p)$ with perfect residue field.
By \emph{Fact} \ref{Teichmuller}, $R$ and $R_{(n)}$ have the sets $S$ and $S_n$ of Teichm\"{u}ller representatives respectively. Then, we have that $\pr_n(S)=S_n$.
\end{remark}
\begin{proof}
It is clear that $\pr_n(S)\subset S_n$.
Since each of $S_n$ and $S$ bijectively corresponds to $R/\fm$ by Fact \ref{Teichmuller}, the inclusion must be equality.
\end{proof}

\begin{remark}\label{rem:unique_expresseion_Teichmuller}
Let $(R,\nu)$ be a complete discrete valuation ring of mixed characteristic $(0,p)$ with perfect residue field. Let $S$ be the set of Teichm\"{u}ller representatives and let $\pi$ be a uniformizer. Then, for any $x\in R$, there is a unique infinite sequence $(\lambda_i)_{i\ge 0}$ of elements in $S$ such that $x=\sum_i\lambda_i \pi^i$.
\end{remark}
\begin{proof}
Fix $x\in R$.
By Fact \ref{Teichmuller}, we inductively choose $\lambda_i$'s in $S$ such that $\nu(x-\sum_{i=0}^n \lambda_i\pi^n)>\nu(\pi^n)$ for each $n\ge 0$. Then, we have that $x=\sum_i \lambda_i \pi^i$.
It remains to show that   such  a sequence is unique. Let $(\lambda_i')$ be a sequence of elements in $S$ such that $x=\sum_i \lambda_i'\pi^i$.
Suppose that $\lambda_i\neq \lambda_i'$ for some $i$. Let $i_0$ be the smallest index such that $\lambda_{i_0}\neq\lambda_{i_0}'$. Then, we have that
\begin{align*}
\pr_1\left(\lambda_{i_0}\right)&=\pr_1\left(\frac{x-\sum_{i<i_0}\lambda_i\pi^i}{\pi^{i_0}}\right)\\
&=\pr_1\left(\frac{x-\sum_{i<i_0}\lambda_i'\pi^i}{\pi^{i_0}}\right)\\
&=\pr_1(\lambda_{i_0}'),
\end{align*}
which implies that $\lambda_{i_0}=\lambda_{i_0}'$, a contradiction.
Thus, $(\lambda_i)=(\lambda_i')$.
\end{proof}

\noindent The following facts are useful to effectively compute $N$ in Question \ref{motivation_1} (see Theorem \ref{main_theorem_construction} and Theorem \ref{main_theorem_existence}).
\begin{fact}[Krasner's lemma]\label{Krasner}
Let $(K,\nu)$ be a henselian  valued field and let $a,b\in K^{alg}$. Suppose $a$ is separable over $K(b)$. Suppose that for all embeddings $\sigma(\neq id)$ of $K(a)$ over $K$, we have
$$\wi{\nu}(b-a)>\wi{\nu}\big(\sigma (a)-a\big).
$$
 Then $K(a)\subset K(b)$.
\end{fact}
\begin{proof}
See Chapter 2 of \cite{L} or Theorem 4.1.7 of \cite{EP}.
\end{proof}

\begin{fact}\label{different bound}

Let $(R,\fm_R) \subset (S,\fm_S)$ be discrete valuation rings. Suppose $S=R[\alpha]$ for some $\alpha\in S$ and $S$ is a finitely generated $R$-module so that $\fm_R S=\fm_S^e$ for a positive integer $e$. Suppose the residue fields of $R$ and $S$ are of characteristic $p>0$. Let $f(x)$ in $R[x]$ be a monic irreducible polynomial of $\alpha$ over $R$.

\begin{enumerate}
\item
The different $\mathfrak{D}_{S/R}$ of $S/R$ is a principal ideal generated by $f'(\alpha)$

\item
Let $\nu_{S}$ be the valuation corresponding to $S$. Let $s$ be the power which satisfies $\mathfrak{m}_S^s=\mathfrak{D}_{S/R}$. Then one has
$$\left\{
  \begin{array}{ll}
    s= e-1, & \hbox{if} \  S \ \textrm{is tamely ramified over R, that is, }p\nmid e; \\
    e \le s\le e-1 +\nu_{S}(e), & \textrm{if} \ S \textrm{ is wildly ramified over R, that is, } p\mid e.
  \end{array}
\right.$$
\end{enumerate}

\end{fact}
\begin{proof}
Chapter 3, Section 2 of \cite{N}.
\end{proof}

For model theory of valued fields, we take the language of valued fields with three types of sorts for valuation fields, residue fields, and value groups.
Let $\CL_{K}=\{+,-,\cdot;0,1;|\}$ be a ring language with a binary relation $\mid$ for valued fields, where we interpret the binary relation $\mid$ as $a \mid b$ if $\nu(a)\le \nu(b)$ for $a,b\in K$, $\CL_{k}=\{+',-',\cdot';0',1'\}$ be the ring language for residue fields, and $\CL_{\Gamma}=\{+^*;0^*;<\}$ be the ordered group language for valuation groups.
The language of valued fields is the language $\CL_{val}=\CL_{K}\cup\CL_{k}\cup\CL_{\Gamma}$ equipped with function symbols $\pr_{k}$ and $\pr_{\Gamma}$, where $\pr_k$ and $\pr_{\Gamma}$ are interpreted as the canonical surjctive maps from the valuation ring to the residue field and from the valued field to the valuation group respectively.
Next, we consider an extended language of $\CL_{val}$ by adding the ring languages for the $n$-th residue rings and function symbols $\pr_n$ and $\pr_m^n$ for $n\ge m$, where $\pr_n$ and $\pr_m^n$ are interpreted as the canonical epimorphisms from the valuation ring to the $n$-th residue ring and from the $n$-th residue ring to the $m$-th residue ring respectively.
For each $n\ge 1$, let $\CL_{R_{(n)}}=\{+_n,-_n,\cdot_n;0_n,1_n\}$ be the ring language for the $n$-th residue ring. For $n=1$, we identify $\CL_{R_{(1)}}=\CL_{k}$.
We get an extended language $\CL_{val,R}=\CL_{val}\cup\bigcup_{n\ge 1} \CL_{R_{(n)}}$ for valued fields.
Let $(K_1,\nu_1,k_1,\Gamma_1)$ and $(K_2,\nu_2,k_2,\Gamma_2)$ be valued fields, and let $R_{1,{(n)}}$ and $R_{2,{(n)}}$ be the $n$-th residue rings of $(K_1,\nu_1)$ and $(K_2,\nu_2)$ respectively.
We say $(K_1,\nu_1)$ and $(K_2,\nu_2)$ are elementarily equivalent if they are elementarily equivalent in $\CL_{K}$.
If $(K_1,\nu_1)$ and $(K_2,\nu_2)$ are elementarily equivalent, then
\begin{itemize}
	\item $k_1$ and $k_2$ are elementarily equivalent in $\CL_{k}$;
	\item $\Gamma_1$ and $\Gamma_2$ are elementarily equivalent in $\CL_{\Gamma}$; and
	\item $R_{1,{(n)}}$ and $R_{2,{(n)}}$ are elementarily equivalent in $\CL_{R_{(n)}}$ for each $n\ge 1$.
\end{itemize}

\begin{remark}\label{iso_Rn_Gamma}
Let $(K_1,\nu_1,\Gamma_1)$ and $(K_2,\nu_2,\Gamma_2)$ be valued fields. Suppose
\begin{itemize}
	\item $R_{1,{(n)}}\equiv R_{2,{(n)}}$ as rings in the language $\CL_{R_{(n)}}$ for each $n\ge 1$;
	\item $\Gamma_1\equiv \Gamma_2$ as ordered abelian groups in the language $\CL_{\Gamma}$.
\end{itemize}
Then there are $\aleph_1$-saturated elementary extensions $(K_1',\nu_1',\Gamma_1')$ and $(K_2',\nu_2',\Gamma_2')$ of $K_1$ and $K_2$ such that
\begin{itemize}
	\item $R_{1,{(n)}}'\cong R_{2,{(n)}}'$ for $n\ge 1$;
	\item $\Gamma_1'\cong \Gamma_2'$,
\end{itemize}
 where $R_{1,{(n)}}'$ and $R_{2,{(n)}}'$ are the $n$-th residue rings of $K_1'$ and $K_2'$ respectively.
\end{remark}

\begin{proof}
It is easily deduced from the the Keisler-Shelah isomorphism theorem.
\end{proof}

Next, we review coarse valuations. For the coarse valuations, we refer to \cite{K, PR}.

\begin{rem/def}\label{def_coarse_valuation}\cite[page 25-27]{PR}
Suppose $(K,\nu,k,\Gamma)$ is finitely ramified. Let $\pi$ be a uniformizer so that $\nu(\pi)$ is the smallest positive element in $\Gamma$. Let $\Gamma^{\circ}$ be the convex subgroup of $\Gamma$ generated by $\nu(\pi)$ and $\dot{\nu}:K\setminus\{0\}\longrightarrow \Gamma/\Gamma^{\circ}$ be a map sending $x(\neq 0)\in K$ to $\nu(x)+\Gamma^{\circ}\in \Gamma/\Gamma^{\circ}$. The map $\dot{\nu}$ is a valuation, called {\em the coarse valuation}. The residue field $\cfd$ of $(K,\dot{\nu})$, called {\em the core field} of $(K,\nu)$,  forms a valued field equipped with a valuation $\cnu$, whose value group is $\Gamma^{\circ}$. More precisely, the valuation $\cnu$ is defined as follows: Let $\pr_{\dnu} : R_{\dnu}\longrightarrow \cfd$ be the canonical epimorphism and let $x\in R_{\dnu}$. If $x^{\circ}:=\pr_{\dnu}(x)\in \cfd\setminus\{0\}$, then $\cnu(x^{\circ}):=\nu(x)$. And $x^{\circ}=0\in \cfd$ if and only if $\nu(x)>\gamma$ for all $\gamma\in \Gamma^{\circ}$.
\end{rem/def}

\begin{remark}\label{coarse_valuation}
\begin{enumerate}
	\item Let $R_{\nu}$, $R_{\dnu}$, and $R_{\cnu}$ be the valuation rings of $(K,\nu)$, $(K,\dnu)$, and $(\cfd,\cnu)$ respectively. Then $(\pr_{\dnu})^{-1}(R_{\cnu})=R_{\nu}$.
	\item Let $R_{(n)}$ and $R^{\circ}_{(n)}$ be the $n$-th residue rings of $(K,\nu)$ and $(\cfd,\cnu)$ respectively. Then there is a canonical isomorphism $\theta_n:R_{(n)}\longrightarrow R^{\circ}_{(n)}$ such that $\pr_n^{\cnu}\circ(\pr_{\dnu}|_{R_{\nu}})=\theta_n\circ\pr_n$, where $\pr_n:R_{\nu}\longrightarrow R_{(n)}$ and $\pr_n^{\cnu}:R_{\cnu}\longrightarrow R^{\circ}_{(n)}$ are the canonical epimorphisms.
	\item If $(K,\nu)$ is henselian, then $(K,\dnu)$ is henselian.
	\item If $(K,\nu)$ is $\aleph_1$-saturated, then $(\cfd,\cnu)$ is complete.
\end{enumerate}
\end{remark}
\begin{proof}
(1) Note that $R_{\dnu}:=\{x\in K|\ \dnu(x)\ge 0\}=\{x\in K|\ \nu(x)\ge \gamma\ \mbox{for some } \gamma\in \Gamma^{\circ} \}$.
Let $x\in R_{\dnu}$ be such that $\pr_{\dnu}(x)=:x^{\circ} \in R_{\cnu}$, that is, $\cnu(x^{\circ})(\in \Gamma^{\circ})\ge 0$.
If $x^{\circ}=0$, $\nu(x)>\gamma$ for all $\gamma\in \Gamma^{\circ}$ and $x\in R_{\nu}$.
If $x^{\circ}\neq 0$, then $\cnu(x^{\circ})=\nu(x)\ge 0$ in $\Gamma^{\circ}$, and hence $\nu(x)\ge 0$ in $\Gamma$.
Thus $x\in R_{\nu}$.
Therefore, for $x\in R_{\dnu}$, $x\in R_{\nu}$ if and only if $x^{\circ}\in R_{\cnu}$.

(2) Note that each $\theta_n$ is induced from $\pr_{\dnu}|_{R_{\nu}}:R_{\nu}\longrightarrow R_{\cnu}$.
It is easy to see that each $\theta_n$ is surjective.
To show that $\theta_n$ is injective, it is enough to show that $\nu(x)\ge n$ if and only if $\cnu(x^{\circ})\ge n$ for $x\in R_{\nu}$. It clearly comes from the definition of $\cnu$ in (1).

(3)-(4) Section 5 of \cite{K}.
\end{proof}

\begin{remark}\label{rem:reduction_elementary_to_isomorphism}
By combining \emph{Fact \ref{AKE}}, \emph{Remark \ref{iso_Rn_Gamma}} and \emph{Remark \ref{coarse_valuation}},  we reduce the problem on elementary equivalence between finitely ramified henselian valued fields of mixed characteristic to the problem on isometricity between complete discrete valued fields of mixed characteristic whose  $n$-th residue rings are isomorphic for each $n\ge 1$. To our knowledge, this strategy first appeared in \cite{K}.
\end{remark}

\section{Lifting homomorphisms}\label{Crietrion_isom}\label{LiftHom}
{\bf From now on, if there is no comment, we consider only complete discrete valued fields of mixed characteristic $(0,p)$ with perfect residue fields, and we assume that valuation groups are $\BZ$ so that for a valued field $(L,R,\nu)$, $\nu(x)=e_{\nu}(x)$ for $x\in R$.}
Let $(R,\nu,k)$ be a valuation ring.
Let $\pi$ be a uniformizer of $R$.
Let $L$ and $K$ be the fraction fields of $R$ and $W(k)$ respectively.

\begin{definition}\label{def:M(R)}
If $L$ is {\bf ramified}, we denote the maximal value
$$
 \max\left\{\wi{\nu}\left(\pi- \sigma(\pi) \right): \sigma\in \Hom_{K}\left(L,L^{alg}\right),\ \sigma (\pi) \neq \pi \right\}
$$
by $M(R)_{\pi}$ or $M(L)_{\pi}$.
\end{definition}

\begin{lemma}\label{restriction map}
Let $(R_i,\fm_i,\nu_i,k_i)$ be a valuation ring and let $\pi_i$ be a uniformizer of $R_i$ for $i=1,2$. Let $S_i$ be the set of Teichm\"uller representatives of $R_i$ for $i=1,2$.
\begin{enumerate}
 \item
 For any homomorphism
 $\phi:R_{1,(n_1)}\longrightarrow R_{2,(n_2)}$, $\phi (S_1 + \fm_1 ^{n_1} )$ is contained in $S_2 +\fm_2^{n_2} $.
 Similarly, for any homomorphism $g:R_1\longrightarrow R_2$, $g(S_1)$ is contained in $S_2$.

 \item
 For any homomorphism $\phi:R_{1,(n_1)}\longrightarrow R_{2,(n_2)}$, $\phi\left((W(k_1)+\fm_1^{n_1})/\fm_1^{n_1}\right)$ is contained in $ (W(k_2)+\fm_2^{n_2})/\fm_2^{n_2}$.
 Similarly, for any homomorphism $g:R_1\longrightarrow R_2$, $g(W(k_1))$ is contained in $W(k_2)$.
\end{enumerate}
\end{lemma}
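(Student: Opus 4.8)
The plan is to reduce both parts to the intrinsic description of the Teichm\"uller set given by Lemma~\ref{Teichmuller}(2): an element of such a ring $A$ lies in the Teichm\"uller system if and only if it is a $p^m$-th power for every $m\ge 0$, a condition obviously preserved by (unital) ring homomorphisms. So the real content is to check that Lemma~\ref{Teichmuller} applies to the Artinian quotients $R_{j,n_j}$ and that their Teichm\"uller sets are exactly $\pr_{j,n_j}(S_j)$.

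First I would verify that Lemma~\ref{Teichmuller} applies to $A=R_{j,n_j}$ with the filtration $\mathfrak a_i:=\pr_{j,n_j}(\fm_j^{\,i})$, which is eventually zero; the hypotheses ($A$ Hausdorff and complete, $\mathfrak a_n\mathfrak a_m\subseteq\mathfrak a_{n+m}$, and $A/\mathfrak a_1=k_j$ a perfect field of characteristic $p$) are then immediate. This yields a Teichm\"uller system $\overline h_j\colon k_j\to R_{j,n_j}$; since $\pr_{j,n_j}\circ h_j$ is also a system of representatives commuting with $p$-th powers, the uniqueness clause of Lemma~\ref{Teichmuller}(1) forces $\overline h_j=\pr_{j,n_j}\circ h_j$, so the Teichm\"uller set of $R_{j,n_j}$ is precisely $\pr_{j,n_j}(S_j)=(S_j+\fm_j^{\,n_j})/\fm_j^{\,n_j}$. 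For part~(1) it then suffices to take $s=h_1(\lambda)\in S_1$, use perfectness of $k_1$ to write $\lambda=\mu^{p^m}$ and hence $s=h_1(\mu)^{p^m}$ for each $m$, and apply $g$ (respectively $\phi$ precomposed with $\pr_{1,n_1}$): the image is a $p^m$-th power for all $m$, so it lies in $S_2$ (respectively in $\pr_{2,n_2}(S_2)$) by Lemma~\ref{Teichmuller}(2).

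For part~(2) I would use Lemma~\ref{witt_embedding} to regard $W(k_j)$ as a subring of $R_j$, note that its own Teichm\"uller system coincides (by uniqueness, after composing with the inclusion $W(k_j)\hookrightarrow R_j$) with $S_j$, so $S_j\subseteq W(k_j)$, and recall that every $w\in W(k_j)$ has a convergent expansion $w=\sum_{i\ge 0}s_ip^i$ with $s_i\in S_j$; hence $W(k_j)=\overline{\BZ[S_j]}$ (closure in $R_j$) and $W(k_j)$ is closed in $R_j$, the induced topology being its $p$-adic topology, in which it is complete. Any homomorphism $g\colon R_1\to R_2$ is local --- tracking the relation $p=\pi_1^{\nu_1(p)}u$ with $u$ a unit through $g$ shows $g(\pi_1)\in\fm_2$ --- hence continuous, so $g(W(k_1))=g(\overline{\BZ[S_1]})\subseteq\overline{g(\BZ[S_1])}\subseteq\overline{\BZ[S_2]}\subseteq W(k_2)$ using part~(1). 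For the finite-length statement, $\overline{\BZ[S_1]}\subseteq\BZ[S_1]+\fm_1^{\,n_1}$ gives $\pr_{1,n_1}(W(k_1))=\pr_{1,n_1}(\BZ[S_1])$, the subring of $R_{1,n_1}$ generated by $\pr_{1,n_1}(S_1)$ and the image of $\BZ$; applying $\phi$ and invoking part~(1) lands inside the subring of $R_{2,n_2}$ generated by $\pr_{2,n_2}(S_2)$ and the image of $\BZ$, which is $\pr_{2,n_2}(\BZ[S_2])\subseteq(W(k_2)+\fm_2^{\,n_2})/\fm_2^{\,n_2}$.

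The steps needing care --- the ``hard part'', such as it is --- are the two bookkeeping points: (i) that Lemma~\ref{Teichmuller} genuinely applies to the Artinian rings $R_{j,n_j}$ and that their Teichm\"uller sets are the projections of $S_j$, and (ii) the topological facts in part~(2), namely that $W(k_j)$ is closed in $R_j$ with induced $p$-adic topology and that an arbitrary unital ring homomorphism between these valuation rings is automatically local, hence continuous. Everything else is a formal consequence of the $p^m$-th power characterization of Teichm\"uller representatives.
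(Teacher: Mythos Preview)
Your proof is correct, and in fact a bit cleaner than the paper's in both parts.

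For part~(1), the paper does not invoke Lemma~\ref{Teichmuller} for the Artinian quotient $R_{2,n_2}$ directly. Instead, for $\lambda\in S_1$ it chooses representatives $\eta_s$ of $\phi(\lambda^{1/p^s}+\fm_1^{n_1})$, shows by a binomial estimate that $(\eta_s^{p^s})_s$ is Cauchy in $R_2$, and checks that the limit $\lim_s \eta_s^{p^s}$ is both a representative of $\phi(\lambda+\fm_1^{n_1})$ and a $p$-th power of the analogous limit for $\lambda^{1/p}$, hence lies in $S_2$ by Lemma~\ref{Teichmuller}(2). Your observation that Lemma~\ref{Teichmuller} applies verbatim to $R_{j,n_j}$ with the eventually-zero filtration, together with uniqueness of the Teichm\"uller system, short-circuits this computation: the $p^m$-th-power characterisation is then available in the quotient itself and the claim is immediate.

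For part~(2), the paper treats the case $g:R_1\to R_2$ exactly as you do, via the $p$-adic expansion $a=\sum_r\lambda_r p^r$ and part~(1). For the quotient map $\phi$, however, the paper takes a different route: it invokes Theorem~\ref{Witt_classic}(2) to produce the unique lift $g_{res}:W(k_1)\to W(k_2)$ of the induced map $k_1\to k_2$, and then verifies that $g_{res}$ actually induces $\phi_{res}$ by comparing term by term on Teichm\"uller expansions. Your argument, that $\pr_{1,n_1}(W(k_1))=\pr_{1,n_1}(\BZ[S_1])$ is the subring generated by projected Teichm\"uller representatives and hence is carried by $\phi$ into $\pr_{2,n_2}(\BZ[S_2])\subseteq (W(k_2)+\fm_2^{n_2})/\fm_2^{n_2}$, is more elementary and avoids Theorem~\ref{Witt_classic} altogether. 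The payoff of the paper's approach is that it simultaneously constructs the lift $g_{res}$ and shows it induces $\phi_{res}$; this lift is precisely what is used as the starting point in the proof of Theorem~\ref{main_theorem_construction}, so the extra work is not wasted there. Your bookkeeping points (i) and (ii) are correctly identified and correctly handled.
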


\begin{proof}
(1)
This comes from Fact \ref{Teichmuller} and Remark \ref{rem:Teichmuller_nth-residuering}.

\smallskip

(2) Since $W(k_i) /pW(k_i)\cong R_i/\fm_i \cong k_i$, $S_i$ is contained in $W(k_i)$ by Fact \ref{Teichmuller}.
Since any element $a$ in $W(k_1)$ can be uniquely written  as $a=\sum_{r=0}^{\infty} \lambda_r p^r$ where $\lambda_r$ is in $S_1$, we have that $\phi\left((W(k_1)+\fm_1^{n_1})/\fm_1^{n_1}\right)\subset  (W(k_2)+\fm_2^{n_2})/\fm_2^{n_2}$ and $g(W(k_1))\subset W(k_2)$ by Lemma \ref{restriction map}.(1).
\end{proof}

\begin{lemma}\label{M_1=M_2}
Let $L_i$ and $K_i$ be the fraction fields of  $R_i$ and $W(k_i)$ respectively for $i=1,2$.
\begin{enumerate}
\item
Let $\alpha$ be a uniformizer of $R_1$.
Then $M(R_1)_{\pi_1}=M(R_1)_{\alpha}$.
We write $M(R_1)_{\pi_1}=M(R_1)$.
\item
Suppose $[L_1:K_1]=[L_2:K_2]=e$, that is, $\nu_1(p)=\nu_2(p)=e$. Suppose there is an isometry $g: L_1\longrightarrow L_2$.
Then $M(R_1)=M(R_2)$.

\end{enumerate}
\end{lemma}

\begin{proof}
(1)
By Remark \ref{rem:unique_expresseion_Teichmuller}, we can write $\alpha= \sum_{r=1}^{\infty}\lambda_r \pi_1^r$
where $\lambda_r$ is a   Teichm\"{u}ller representative of $R_1$ for each $r$ and $\lambda_1 \neq0 $.
Since $R_1 /\fm_1 = k_1$, $\lambda_r$ is in $W(k_1)$ for each $r$ by Fact \ref{Teichmuller}. For any $\sigma$ in $\Hom_{K_1}(L_1,K_1^{alg})$,
\begin{align*}
\alpha-\sigma(\alpha)&=\sum_{r=1}^{\infty}\lambda_r \pi_1^r-\sigma\left(\sum_{r=1}^{\infty}\lambda_r \pi_1^r\right)\\
&=\sum_{r=1}^{\infty}\lambda_r\Big( \pi_1^r-\sigma(\pi_1^r)\Big)\\
&=\Big(\pi_1-\sigma(\pi_1)\Big)\sum_{r=1}^{\infty}\lambda_r
\left(\sum_{j=0}^{r-1}\pi_1^{r-1-j} \sigma(\pi_1^j)\right)
\end{align*}
and
$
\wi{\nu_{1}}(\alpha-\sigma(\alpha))=\wi{\nu_1}(\pi_1- \sigma(\pi_1))$ because
$$
\wi{\nu_1}\left(\sum_{r=1}^{\infty}\lambda_r\left(\sum_{j=0}^{r-1}\pi_1^{r-1-j}  \sigma(\pi_1^j)\right)\right)=0.
$$

\noindent
So, we have $M(R_1)_{\pi_1}=M(R_1)_{\alpha} $.

\smallskip

(2)
By Lemma \ref{restriction map}.(2), $g(K_1)$ is contained in $K_2$.
Let $f_1$ be the monic irreducible polynomial of $\pi_1$ over $W(k_1)$.
Since $g$ is an isometry, we have $\ov{\nu_2}(g(\pi_1))=\ov{\nu_1}(\pi_1)= 1/e$, and hence, $g(\pi_1)$ is a uniformizer of $L_2$. Let $\wi{g}:L_1^{alg}\longrightarrow L_2^{alg}$ be an extended homomorphism of $g$. If we write $f_1=x^e + \cdots +a_1x +a_0$, we have that
\begin{align*}
g(f_1)&=x^e+ \cdots+g(a_1)x+g(a_0)
\end{align*}
is the monic irreducible polynomial of $g(\pi_1)$ over $K_2$ since $g(K_1)$ is contained in $K_2$. Then by Lemma \ref{M_1=M_2}.(1) and Fact \ref{isometry}, we get
\begin{align*}
M(R_2)&= \max\left\{\wi{\nu_2}\left(g(\pi_1)-\eta \right):g(f_1)(\eta)=0 ,\ \eta \neq g(\pi_1) \right\}\\
&=\max\left\{\wi{\nu_2}\left(g(\pi_1)- \wi{g}(\pi_1') \right):f_1(\pi_1')=0 ,\ \pi_1' \neq \pi_1 \right\}\\
&=\max\left\{\wi{\nu_1}\left(\pi_1- \pi_1' \right):f_1(\pi_1')=0 ,\ \pi_1' \neq \pi_1 \right\}\\
&=M(R_1),
\end{align*}
which finishes the proof.
\end{proof}

Now we introduce the notion of lifting maps.
\begin{definition}\label{def:lifting_homomorphim}
Let $R_1 $ and $R_2$ be complete discrete valuation rings of characteristic $0$ with perfect residue fields $k_1$ and $k_2$ of characteristic $p$ respectively. Let $\mathfrak{m}_i$ be the maximal ideal of $R_i$ for $i=1,2$.
Let $L_i$ and $K_i$ be the fraction fields of $R_i$ and $W(k_i)$ for $i=1,2$ respectively. For any homomorphism $\phi:R_{1,(n_1)}\longrightarrow R_{2,(n_2)}$, we say that  a homomorphism $g:R_1\longrightarrow R_2$  is a \emph{$(n_1,n_2)$-lifting} of $\phi$ if $g$ satisfies the following:
 \begin{itemize}
\item
For any $x$ in $R_1$, there exists a representative $\beta_x$ of $\phi(x+\fm_1^{n_1})$ which satisfies
$$
  \widetilde{\nu_2}\big(g(x)-\beta_x\big)
  > M(R_1)
  $$
\item
$\phi_{red,1} \circ \pr_{1,1}=\pr_{2,1} \circ g$ where $\phi_{red,1}:k_1 \longrightarrow k_2 $ denotes the natural reduction map of $\phi$ and $\pr_{i,1}:R_i \longrightarrow k_i$ is the canonical epimorphism for $i=1,2$.
\end{itemize}
When such $g$ is unique, we denote $g$ by $\L_{n_1,n_2}(\phi)$. When $\L_{n_1,n_2}(\phi)$ exists for all $\phi:R_{1,(n_1)}\longrightarrow R_{2,(n_2)}$, we write $\L_{n_1,n_2}:\Hom(R_{1,(n_1)},R_{2,(n_2)})\longrightarrow \Hom(R_1,R_2)$. When $n_1=n_2=n$, we denote $\L_{n_1,n_2}$ by $\L_{n}$ and say that $\L_{n}$ is an \emph{$n$-lifting}.
\end{definition}

\noindent The following example explains why we need our `approximate' lifting map for the ramified case.
\begin{example}\label{example_tame optimal}
If we take $R_1 = R_2 = \mathbb{Z}_3 [\sqrt3]$ and $n_1=n_2=2n$, then $R_{1,(2n)} =R_{2,(2n)} \cong (\mathbb{Z}_3 /3^{n}\mathbb{Z}_3) [x]/ (x^2 -3) $.
Then $\phi: a+bx \mapsto a+(1+3^{n-1})bx =\phi(a+bx)$ defines an isomorphism between $R_{1,(2n)} $ and $R_{2,(2n)} $.
But when $n>1$, there is no homomorphism $g:R_1 \longrightarrow R_2$ which induces $\phi$ since the Galois conjugates of $\sqrt{3}$ are $\pm \sqrt{3}$.
This shows that we can not guarantee that the following diagram is commutative:
$$
\begin{CD}
R_1 @>\L_{n_1,n_2}(\phi)>> R_2\\
@V VV @V VV \\
R_{1,(n_1)} @>\phi>>R_{2,(n_2)}
\end{CD}
$$
\end{example}

We introduce a weaker condition of lifting map, which will turn out to be equivalent to Definition \ref{def:lifting_homomorphim} (see Proposition \ref{proposition_lifting homomorphism}). This weaker notion is useful to show  the functoriality of lifting maps (see Proposition \ref{proposition lifting functor}).

\begin{proposition}\label{proposition_lifting homomorphism}
For a homomorphism $\phi:R_{1,(n_1)}\longrightarrow R_{2,(n_2)}$, suppose that  a homomorphism $g:R_1\longrightarrow R_2$ satisfies the following:
 \begin{itemize}
\item
There exists a representaive $\beta$ of $\phi(\pi_1 +\fm_1^{n_1})$ which satisfies
$$
  \widetilde{\nu_2}\left(g(\pi_1)-\beta\right)
  > \max_{\sigma}\limits \left\{\widetilde{\nu_2}\left(\sigma\left(g(\pi_1)\right) -\beta  \right):\sigma\left(g(\pi_1)\right)\neq g(\pi_1)   \right\}
$$
          where $\sigma$ runs through all of $\Hom_{K_2}(L_2,L_2^{alg})$.
\item
$\phi_{red,1} \circ \pr_{1,1}=\pr_{2,1} \circ g$ where $\phi_{red,1}:k_1 \longrightarrow k_2 $ is the natural reduction map of $\phi$.
\end{itemize}

\begin{enumerate}
\item
We have that
$$ \max_{\sigma}\limits\left\{\widetilde{\nu_2}\left(\sigma\left(g(\pi_1)\right) -\beta  \right):\sigma\left(g(\pi_1)\right)\neq g(\pi_1)\right\}=M(R_1).
$$

\item

For any $x$ in $R_1$, there exists a representative $\beta_x$ of $\phi(x+\fm_1^{n_1})$ which satisfies
$$
  \widetilde{\nu_2}\left(g(x)-\beta_x\right)
  > M(R_1)
  $$
so that $g$ is a $(n_1, n_2)$-lifting of $\phi$.
\end{enumerate}

\end{proposition}

\begin{proof}

(1)
For $\sigma\in \Hom_{K_2}(L_2, L_2 ^{alg})$ with $\sigma ( g(\pi_1) )\neq g(\pi_1)$, we have
\begin{align*}
\widetilde{\nu_2}\left(\sigma\left(g(\pi_1)\right)-g(\pi_1)\right) &= \widetilde{\nu_2}\left(\sigma\left(g(\pi_1)\right)-\beta +\beta-g(\pi_1) \right) \\
&=\min\left\{\widetilde{\nu_2}\left(\sigma\left(g(\pi_1)\right) -  \beta  \right),\
\widetilde{\nu_2}\left(g(\pi_1)-\beta \right) \right\}\\
&=\widetilde{\nu_2}\left(\sigma\left(g(\pi_1)\right) -  \beta  \right)
\end{align*}
where the second equality follows from the ultrametric inequality and the assumption $\widetilde{\nu_2}(g(\pi_1)-\beta) >\widetilde{\nu_2}(\sigma(g(\pi_1))-\beta)$.

This shows
\begin{align*}
M(R_1)&=\max_{\sigma'}\left\{\wi{\nu_1}\left(\pi_1- \sigma'(\pi_1) \right): \sigma' (\pi_1) \neq \pi_1 \right\}\\
&=\max_{\sigma}\left\{\widetilde{\nu_2}\left(g(\pi_1)- \sigma\left( g(\pi_1)\right)  \right) :\sigma\left(g(\pi_1)\right)\neq g(\pi_1) \right\} \\ &=\max_{\sigma}\left\{\widetilde{\nu_2}\left(\sigma\left(g(\pi_1)\right) -\beta  \right):\sigma\left(g(\pi_1)\right)\neq g(\pi_1) \right\}
\end{align*}

\noindent
where $\sigma'$ runs through all of  $\Hom_{K_1}(L_1,L_1^{alg})$.
The second equality follows from Lemma \ref{M_1=M_2}.(2) because $[K_2(g(\pi_1)):K_2]$ is equal to $[L_1:K_1]$ and $g(\pi_1)$ is a uniformizer of $K_2(g(\pi_1))$ by Fact \ref{fact:valuationring_homo_isometry}.

\smallskip

(2)
For any $x$ in $R_1$, we can write
$x= \sum_{r=0}^{\infty}\lambda_r \pi_1^r $ where $\lambda_r$ is in $S_1$ for each $r$.
Then $$
\phi(x+\fm_1^{n_1}) =\phi\left( \left(\sum_{r=0}^{\infty}\lambda_r \pi_1^r \right)+\fm_1^{n_1}\right) \\
=\left(\sum_{r=0}^{\infty}\tau_r \beta^r \right)+\fm_2^{n_2}
$$
where $\tau_r$ is a representative of $\phi(\lambda_r +\fm_1^{n_1})$ contained in $S_2$ which is guaranteed by Lemma \ref{restriction map}.(1).
 In particular $\sum_{r=0}^{\infty}\tau_r \beta^r $ is a representative of $\phi(x+\fm_1^{n_1}) $, say $\beta_x$.
By  Lemma \ref{restriction map}.(1) again, we have $g(\lambda_r)=\tau_r$, and hence,
$$
g(x) =g\left(\sum_{r=0}^{\infty}\lambda_r \pi_1^r\right)=\sum_{r=0}^{\infty}\tau_r g(\pi_1)^r.
$$
We obtain
\begin{align*}
\widetilde{\nu_2}(g(x)- \beta_x)&=\widetilde{\nu_2}\left(\sum_{r=0}^{\infty}\tau_r g(\pi_1)^r-\sum_{r=0}^{\infty}\tau_r \beta^r\right)\\
&=\widetilde{\nu_2}\left( \Big(g(\pi_1)-\beta \Big)\sum_{r=1}^{\infty} \tau_r\left(\sum_{j=0}^{r-1} g(\pi_1)^{r-1-j}\beta^j\right)  \right)\\
& >M(R_1)
\end{align*}
because
\begin{align*} \widetilde{\nu_2}\left(g(\pi_1)-\beta\right)& >\max_{\sigma}\left\{\widetilde{\nu_2}\left(\sigma\left(g(\pi_1)\right) -\beta  \right):\sigma\left(g(\pi_1)\right)\neq g(\pi_1)\right\}\\
&=M(R_1).
\end{align*}
So $g$ is a $(n_1, n_2)$-lifting of $\phi$.
\end{proof}

The following theorem shows that there is a unique lifting if we enlarge the lengths of residue rings.
\begin{theorem}\label{main_theorem_construction}
Suppose  $n_2> M(R_1)\nu_1(p)\nu_2(p) $ and $\Hom(R_{1,(n_1)},R_{2,(n_2)})$ is not empty.
Then there exists a unique $(n_1,n_2)$-lifting $\L_{n_1, n_2}:\Hom(R_{1,(n_1)},R_{2,(n_2)})\longrightarrow\Hom(R_1,R_2)$.
Also, $\L_{n_1, n_2}(\phi)$ is an isomorphism when $\phi:R_{1,(n_1)}\longrightarrow R_{2,(n_2)}$ is an isomorphism.
\end{theorem}
\begin{proof}
Let $\phi$ be a homomorphism from $R_{1,(n_1)}$ to $R_{2,(n_2)}$.
By Lemma \ref{restriction map}.(2), let
$$
\phi_{res} :\frac{W(k_1)+\fm_1^{n_1}}{\fm_1^{n_1}} \longrightarrow \frac{W(k_2)+\fm_2^{n_2}} {\fm_2^{n_2}}
$$
be the restrition map of $\phi$.
For an element $a=\sum_{r=0}^{\infty} \lambda_r p^r$ in $W(k_1)$, we define $g_{res}:W(k_1)\longrightarrow W(k_2)$ by the rule
$$
g_{res}:W(k_1)\longrightarrow W(k_2),\ a\mapsto g_{res}(a) =\sum_{r=0}^{\infty}\tau_r p^r
$$
 where $\tau_r$ is a unique representative of $\phi_{res} (\lambda_r + \fm_1^{n_1})$ which is contained in $S_2$, the set of Teichm\"{u}ller representatives of $R_2$.
Then, by the proof of Fact \ref{Witt_classic}.(2) (c.f. the proof of \cite[Proposition 10]{S}), $g_{res}$ is a homomorphism and $g_{res}$ induces $\phi_{res}$.
By Fact \ref{witt_embedding}, $L_1=K_1(\alpha)$ is totally ramified of degree $\nu_1(p)$ over $K_1$, that is, $[L_1:K_1]=\nu_1(p)$, where $\alpha=\pi_1$ is a uniformizer of $R_1$.
Let $f$ be the monic irreducible polynomial of $\alpha$ over $K_1$.
The ring homomorphism $g_{res}$ induces a field homomorphism from $K_1$ into $K_2$.
We still denote the fraction field homomorphism by $g_{res}$ if there is no confusion. Then $g_{res}: K_1 \longrightarrow K_2$ is an isometry by Fact \ref{fact:valuationring_homo_isometry}.
Let $\widetilde{g_{res}}: K_1^{alg}\longrightarrow K_2^{alg}$ be an  extended field homomorphism of $g_{res}$, which is also an isometry by Fact \ref{isometry}. Write
\begin{align*}
f=&x^{\nu_1(p)} + \cdots+ a_1 x + a_0 \\
=&(x-\alpha_1)\cdots(x-\alpha_{\nu_1(p)})
 \end{align*}
 where $\alpha=\alpha_1$, and put
 \begin{align*}
 g_{res}(f)=& x^{\nu_1(p)} + \cdots + g_{res}(a_1) x + g_{res}(a_0)\\
=&\left(x-\widetilde{g_{res}}(\alpha_1)\right)
\cdots \left(x-\widetilde{g_{res}}(\alpha_{\nu_1(p)})\right).
\end{align*}
We have that $[K_2(\wi{g_{res}}(\alpha) ):K_2]\le [K_1(\alpha) :K_1] = \nu_1(p)$ and that $\wi{\nu_2}(\wi{g_{res}}(\alpha)) = \wi{\nu_1}(\alpha)= 1/ \nu_1(p)  $ because $\wi{g_{res}}$ is an isometry.
Therefore $g_{res}(f)$ is the monic irreducible polynomial of $\widetilde{g_{res}} (\alpha)$ over $K_2$.
Let $\beta$ be any representative of $\phi(\alpha+ \fm_1^{n_1})$. Since $g_{res}$ induces $\phi_{res}$,
we can write
\begin{align*}
0+\fm_2^{n_2}&=\phi(f(\alpha)+ \fm_1^{n_1} )\\
&=\phi(\alpha+\fm_1^{n_1})^{\nu_1(p)} + \cdots+ \phi(a_1 + \fm_1^{n_1})\phi(\alpha+\fm_1^{n_1}) + \phi(a_0 + \fm_1^{n_1})\\
&=g_{res}(f)(\beta) +\fm_2^{n_2}.
\end{align*}
This shows that
$g_{res}(f)(\beta)$ is in $\fm_2^{n_2}$
and
$$
\nu_2\left(g_{res}(f)(\beta)\right) \ge n_2> M(R_1)\nu_1(p)\nu_2(p).
$$
We claim that there exists an index $i_0$ satisfying $\widetilde{\nu_2}(\beta -\widetilde{g_{res}}(\alpha_{i_0}) )> M(R_1) $.
If
$\widetilde{\nu_2}(\beta -\widetilde{g_{res}}(\alpha_i) ) \le M(R_1)$ for all $i$, then
$$
\widetilde{\nu_2}\left(g_{res}(f)(\beta)\right)
=\widetilde{\nu_2}\left( \prod_{i}\left(  \beta -\widetilde{g_{res}}(\alpha_i)  \right)  \right)\le M(R_1)\nu_1(p).
$$
This shows
$$
n_2\le \nu_2\left( g_{res}(f)(\beta) \right)
= \nu_2(p)\widetilde{\nu_2}\left(g_{res}(f)(\beta)\right)
\le M(R_1)\nu_1(p)\nu_2(p),
$$
which is impossible. Thus there is an index $i_0$ satisfying
$$
\widetilde{\nu_2}\left(\beta -\widetilde{g_{res}}(\alpha_{i_0}) \right)> M(R_1)=\max\left\{\widetilde{\nu_2}\left(\widetilde{g_{res}} (\alpha_1)- \widetilde{g_{res}} (\alpha_j)\right): j=2,...,\nu_1(p) \right\}
$$
where the equality follows from the fact that $\widetilde{g_{res}}$ is an isometry.
Hence, by Fact \ref{Krasner},  $K_2(\widetilde{g_{res}}(\alpha_{i_0})) \subset K_2(\beta) \subset L_2$.
We define an extended homomorphism $g:L_1 \longrightarrow L_2$ of $g_{res}:K_1\longrightarrow K_2$ by the rule
$\pi_1\mapsto g(\pi_1)=\widetilde{g_{res}}(\alpha_{i_0}) $. Then,
$g$ induces the restricted homomorphism from $R_1$ to $R_2$ which is still denoted by $g$.
Also, $g$ is a $(n_1,n_2)$-lifting of $\phi$ because $g_{res}$ induces $\phi_{res}$ and
$$
M(R_1)=\max_{\sigma}\left\{\widetilde{\nu_2}\left( \sigma\left(g(\pi_1)\right) -\beta  \right) :\sigma\left(g(\pi_1)\right)\neq g(\pi_1) \right\}
$$ by Lemma \ref{proposition_lifting homomorphism}.

Suppose that  $g_1:R_1\longrightarrow R_2$ is an $(n_1,n_2)$-lifting of $\phi$ other than $g$. We note that the restriction $g|_{S_1}$ of $g$ to $S_1$ is equal to   $g_1|_{S_1}$ by Fact \ref{Witt_classic}.
From  Remark \ref{rem:unique_expresseion_Teichmuller} and  $g|_{S_1}=g_1|_{S_1}$, it follows that $g_1|_{W(k_1)}=g|_{W(k_1)}$. Since $R_1=W(k_1)[\pi_1]$, $g=g_1$ if $g(\pi_1)=g_1(\pi_1)$. So, $g(\pi_1)\neq g_1(\pi_1)$, and by Proposition \ref{proposition_lifting homomorphism},
$$
\widetilde{\nu_2} \left(g_1 (\pi_1) -\beta\right) >\max_{\sigma}\left\{\widetilde{\nu_2}\left(\sigma\left(g_1(\pi_1)\right) -\beta  \right): \sigma \left( g_1\left(\pi_1\right) \right)\neq g_1(\pi_1) \right\}.
$$
Since $g_1|_{W(k_1)} = g|_{W(k_1)}$, $g(\pi_1)$ and $g_1(\pi_1)$ have the same minimal polynomial over $W(k_2)$ and
$$
\left\{\sigma\left(g_1(\pi_1)\right):\sigma\in \Hom_{K_2}(L_2, L_2 ^{alg})\right\}=\left\{\sigma\left(g(\pi_1)\right) :\sigma\in \Hom_{K_2}(L_2, L_2 ^{alg})\right\}.
$$
In particular
$g_1(\pi_1)=\sigma(g(\pi_1))$ for some $\sigma\in \Hom_{K_2}(L_2, L_2 ^{alg})$.
Since $g_1(\pi_1) \neq g(\pi_1)$, we have the inequalities $\wi{\nu_2}(g_1(\pi_1) -\beta) > \wi{\nu_2}(g(\pi_1) -\beta)$ and $\wi{\nu_2}(g_1(\pi_1) -\beta) < \wi{\nu_2}(g(\pi_1) -\beta)$ simultaneously by the first bullet point of Proposition \ref{proposition_lifting homomorphism}. This gives a contradiction, and hence, we obtain the uniqueness of the lifting.

When $\phi$ is an isomorphism,  so are $\phi_{res}$ and $g_{res}$. We obtain $[L_2:K_2]  =[L_1:K_1]$ from the assumption that $n_2>M(R_1)\nu_1(p)\nu_2(p)$, and hence, $\L_{n_1, n_2} (\phi)$ is also an isomorphism.
\end{proof}

\noindent
We note that the proof of Theorem \ref{main_theorem_construction} works for any representative $\beta$ of $\phi(\pi_1+\fm_1^{n_1})$.

\begin{example}\label{ex:example_tame optimal}
Let $R_1=\BZ_3[\sqrt{3}]$ and $R_2=\BZ_3[\sqrt{-3}]$.
There is no homomorphism between $R_1 $ and $R_2$ by Kummer theory. But there is an isomorphism
$$
\phi:R_{1,(2)}= \frac{\BZ_3[\sqrt{3}]}{3\BZ_3[\sqrt{3}]}
\longrightarrow
R_{2,(2)}=\frac{\BZ_3[\sqrt{-3}]}{3\BZ_3[\sqrt{-3}]}
$$
given by the rule
$a+b\sqrt {3} \mapsto a+b\sqrt{-3}$. Since $\nu_1(3)=\nu_2(3)=2$ and $M(R_1)=\wi{\nu_1}(\sqrt{3}-(-\sqrt{3}))=1/2$, we obtain $M(R_1)\nu_1(3)\nu_2(3)=2$. Hence
the lower bound for $n_2$ in \emph{Theorem \ref{main_theorem_construction}} is the best possible in this case. This phenomenon will be generalized in \emph{Proposition \ref{tame optimal pro}}.
\end{example}

We give a generalized version of Fact \ref{Witt_classic}.(1) for the ramified case. We first give a useful upper bound for  $M(R)$.
\begin{lemma}\label{lem:upperbound_M(R)}
Let $(R,\nu,k)$ be a valuation ring and let $\pi$ be a uniformizer of $R$. Let $L$ and $K$ be fraction fields of $R$ and $W(k)$ respectively. Then,
$$M(R)\le \frac{1+\nu(\nu(p))}{\nu(p)}.$$
\end{lemma}
\begin{proof}
Let $f$ be the monic irreducible polynomical of $\pi$ over $K$,   which is of degree  $e:=\nu(p)$.
 Let $\pi_1(:=\pi),\ldots, \pi_e$ be the distinct zeros of $f$.
   We have
$\widetilde{\nu}(\pi)= 1/e$ and hence  $\widetilde{\nu}
(\pi_i-\pi_j)\ge 1/e$ for all $i$ and $j$.
Furthermore, by definition of $M(R)$, we have that for some $2\le i_0\le e$,
\begin{itemize}
	\item $M(R)\ge \wi{\nu}(\pi)=\frac{1}{e}$; and
	\item $M(R)=\wi{\nu}\big(\pi_1-\pi_{i_0}\big)$.
\end{itemize}

Consider the differentiation
$$
f'= \sum_{i=1}^{e}{\frac{f}{(x-\pi_i)}}.
 $$
There are two cases.
\begin{itemize}
\item
Tame case:
  Suppose $L/K$ is tamely ramified. Hence, $\nu(\nu(p))=\nu(e)=0$.
It follows from Fact \ref{different bound} that
$$
\frac{e-1}{e} = \widetilde{\nu}\big(f' (\pi_1)\big)
=\widetilde{\nu}\left(\prod_{j\neq 1}(\pi_1-\pi_j)\right)
=\sum_{j \neq 1} \widetilde{\nu}(\pi_1-\pi_j).
$$
Since $\widetilde{\nu}
(\pi_i-\pi_j)\ge 1/e$
,
$\widetilde{\nu}
(\pi_1-\pi_j)
=1/e=M(R)
$ for $j\neq1$.
Hence, we have that $$
M(R)=\frac{1}{e}=\frac{1+\nu(e)}{e}.$$

\item
Wild case:
  Suppose $L/K$ is wildly ramified.
Noting that  $\widetilde{\nu}
(\pi_i-\pi_j)\ge 1/e$, we have that
\begin{align*}
M(R)&\le \wi{\nu}(\pi_1-\pi_{i_0})+\sum_{2\le i\neq i_0\le e}\left(\wi{\nu}(\pi_1-\pi_{i})-\frac{1}{e}\right)\\
&= \wi{\nu}\left(\prod_{i\neq 1}(\pi_1-\pi_i)\right)-\frac{(e-2)}{e}\ =\ \wi{\nu}(f'(\pi))-\frac{e-2}{e}\\
&\le \frac{e-1+\nu(e)}{e} -\frac{e-2} {e} \ =\ \frac{1+\nu(e)}{e}
\end{align*}
by Fact \ref{different bound} again.
\end{itemize}
Therefore we get the desired result.
\end{proof}

\begin{theorem}\label{main_theorem_existence}
Let $\overline{R}$ be a principal Artinian local ring of length $n$ with  perfect residue field $k$ of characteristic $p$ and  maximal ideal $\overline{\mathfrak{m}}$, that is, $\overline{\mathfrak{m}} ^n=0$ and $\overline{\mathfrak{m}} ^{n-1}\neq0$.
Suppose that $\overline{R}$ has no finite subfield as a subring. For any positive integer $a$, if $a$ generates a nonzero ideal $\overline{\mathfrak{m}}^{k}$, we denote $k$ by $\nu(a)$.
Suppose
 $$
 \nu(p)\overline{R}\neq0\ \mbox{and }n>\nu(p)+\nu(p) \nu\big(\nu(p) \big) .
 $$ Then there exists a complete discrete valuation ring of characteristic $0$ which has $\overline{R}$ as its $n$-th residue ring.
 Also such a ring is unique up to isomorphism.
\end{theorem}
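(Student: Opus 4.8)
The plan is to treat existence and uniqueness separately. Put $e:=\nu(p)$. The length hypothesis forces $e<n$: if $p=0$ in $\overline R$ then $p$ generates $\overline{\fm}^{n}$, so $\nu(p)=n$ and $n>\nu(p)+\nu(p)\nu(\nu(p))$ cannot hold. Hence $p\neq 0$ in $\overline R$ (equivalently $\overline R$ is of mixed characteristic, which is exactly what the hypothesis that $\overline R$ contains no finite subfield amounts to here) and $1\le e<n$.

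\emph{Existence.} Fix a generator $\overline\pi$ of $\overline{\fm}$. Since $(p)=\overline{\fm}^{e}$ we may write $p=\bar v^{-1}\overline\pi^{e}$ with $\bar v\in\overline R^{\times}$. Using Teichm\"uller representatives (Lemma \ref{Teichmuller}) one expands every element of $\overline R$, in particular $\bar v$, as a polynomial in $\overline\pi$ with coefficients in the image of $W(k)$; lifting those coefficients to $W(k)$ yields an element $v$ of the power series ring $W(k)[[x]]$, with $v(0)\in W(k)^{\times}$ since $\bar v$ is a unit, for which $h(x):=x^{e}-p\,v(x)$ reduces to $x^{e}$ modulo $p$ and satisfies $h(\overline\pi)=0$ in $\overline R$. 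Since $h$ reduces modulo $p$ to $x^{e}$, Weierstrass preparation in $W(k)[[x]]$ yields a factorization $h=u\cdot f$ with $u\in W(k)[[x]]^{\times}$ and $f\in W(k)[x]$ monic of degree $e$ with all lower coefficients in $pW(k)$; since $f(0)=h(0)/u(0)\in pW(k)\setminus p^{2}W(k)$, the polynomial $f$ is Eisenstein. Then $R:=W(k)[x]/(f)$ is a complete discrete valuation ring of characteristic $0$, totally ramified of degree $e$ over $W(k)$, with residue field $k$ (Lemma \ref{witt_embedding}, cf. \cite{S}). The continuous $W(k)$-algebra map $W(k)[[x]]\to\overline R$ sending $x\mapsto\overline\pi$ is surjective, its image being the subring generated by the image of $W(k)$ and $\overline\pi$, which is all of $\overline R$ by the Teichm\"uller expansion above; and it kills $h$, hence kills $f=u^{-1}h$, so it descends to a surjection $R\twoheadrightarrow\overline R$. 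As $R$ is a discrete valuation ring, its kernel is $\fm_R^{t}$ for a unique $t$ with $1\le t<\infty$ (nonzero since $R$ is not Artinian, proper since $\overline R\neq 0$); comparing $\overline{\fm}^{n}=0\neq\overline{\fm}^{n-1}$ with $\fm_R^{t}=0\neq\fm_R^{t-1}$ in $R/\fm_R^{t}$ gives $t=n$, so $\overline R\cong R/\fm_R^{n}$. (This is the Cohen structure theorem \cite{H} at work: $\overline R$ is a quotient of $W(k)[[x]]$, and the computation shows that quotient is a quotient of a complete discrete valuation ring.)

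\emph{Uniqueness.} Suppose $R$ and $R'$ are complete discrete valuation rings of characteristic $0$ with $R/\fm_R^{n}\cong\overline R\cong R'/\fm_{R'}^{n}$. Reducing $(p)$ modulo the $n$-th power of the maximal ideal shows $\nu_R(p)=\nu(p)=e<n$ and likewise for $R'$, so both $R$ and $R'$ are totally ramified of degree $e$ over $W(k)$ with residue field $k$ (Lemma \ref{witt_embedding}). Let $\pi$ be a uniformizer of $R$, with minimal polynomial $f$ over $W(k)$ and conjugates $\pi=\pi_{1},\pi_{2},\dots,\pi_{e}$. By Lemma \ref{different bound} the exponent $s$ of the different $\mathfrak{D}_{R/W(k)}=(f'(\pi))$ satisfies $s\le e-1+\nu_R(e)$, where $\nu_R(e)=\nu(\nu(p))$ is the order of the integer $e$ (the same computed in $R$ or in $\overline R$, and $<n$ by the length hypothesis); and $f'(\pi)=\prod_{j=2}^{e}(\pi-\pi_{j})$ gives $\sum_{j=2}^{e}\wi{\nu_R}(\pi-\pi_{j})=s/e$. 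Since each $\wi{\nu_R}(\pi-\pi_{j})\ge 1/e$, spreading the excess $s-(e-1)$ over the $e-1$ terms yields
\[
M(R)=\max_{2\le j\le e}\wi{\nu_R}(\pi-\pi_{j})\ \le\ \frac1e+\frac{s-(e-1)}{e}\ \le\ \frac{1+\nu(\nu(p))}{e}\,,
\]
whence $M(R)\,\nu_R(p)\,\nu_{R'}(p)=M(R)\,e^{2}\le\nu(p)+\nu(p)\nu(\nu(p))<n$. Theorem \ref{main_theorem_construction}, applied with $n_{1}=n_{2}=n$ to the composite isomorphism $\phi\colon R/\fm_R^{n}\cong\overline R\cong R'/\fm_{R'}^{n}$, then produces a lifting $\L_{n,n}(\phi)\colon R\to R'$, which is an isomorphism because $\phi$ is. Hence $R\cong R'$.

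\emph{Where the difficulty lies.} Uniqueness is bookkeeping once Theorem \ref{main_theorem_construction} is in hand; its only real content is the inequality $M(R)\,e^{2}\le\nu(p)+\nu(p)\nu(\nu(p))$, which is precisely why the bound on $n$ is stated in that form and which falls out of the different estimate (Lemma \ref{different bound}). The substantive step is existence: producing a genuine Eisenstein polynomial over $W(k)$ from the purely Artinian datum $\overline R$ — this is where Weierstrass preparation (equivalently the Cohen structure theorem) is essential — and then checking that the complete discrete valuation ring it cuts out truncates back to $\overline R$ at exactly the $n$-th level and no other.
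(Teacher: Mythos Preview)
Your proof is correct, and the uniqueness half tracks the paper's argument closely: both bound $M(R)$ via the different estimate of Lemma~\ref{different bound} to obtain $M(R)\,e^{2}\le e+e\,\nu(e)$ and then invoke Theorem~\ref{main_theorem_construction}. The paper splits into tame and wild cases (computing $M(R)=1/e$ exactly when $p\nmid e$), whereas you give a single unified bound; this is a mild streamlining but not a different idea.

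The existence half differs in presentation. The paper disposes of it in one sentence by citing the Cohen structure theorem (every principal Artinian local ring is a quotient of a discrete valuation ring, hence after completion of a complete one), whereas you construct the Eisenstein polynomial explicitly via Weierstrass preparation in $W(k)[[x]]$. Your route is more self-contained and makes the role of the Witt ring visible already at the existence stage, at the cost of tacitly using a ring map $W(k)\to\overline R$, which the paper's Theorem~\ref{Witt_classic}(2) literally states only for complete discrete valuation ring targets; the Artinian case is of course standard (and Lemma~\ref{Teichmuller} is stated in enough generality to supply it), but it is not what the paper proves. Your closing assessment that existence is ``the substantive step'' and uniqueness mere ``bookkeeping'' inverts the paper's own emphasis: there, existence is a one-line citation and the content lies entirely in the $M(R)$ bound.
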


\begin{proof}
Any principal Artinian local ring is a homomorphic image of a discrete valuation ring. This can be proved by Cohen structure theorem for complete local rings (c.f. \cite{H}) or, more directly, by the property of CPU-rings (c.f. \cite{HT}). Since the completion of a discrete valuation ring $R$ has the same $n$-th residue ring as that of $R$, we may assume that there are complete discrete valuation rings $R_1 $ and $R_2$  such that $R_{1,(n)}$ and $R_{2,(n)}$ are isomorphic to $\overline{R}$. We note that $R_i$ is of characteristic $0$ for $i=1,2$ because $\overline{R}$ has no finite subfield as a subring.
Let $L_i$ and $K_i$ be the fraction fields of $R_i$ and $W(k_i)$ for $i=1,2$ respectively.
Then by Fact \ref{witt_embedding}, $L_1 =K_1(\alpha)$ where $\alpha=\pi_1$ is a uniformizer of $R_1$.
By Lemma \ref{lem:upperbound_M(R)}, we have that $$M(R_1)\nu_1(p)\nu_2(p)\le \nu_2(p)(1+\nu_1(\nu_1(p)))=\nu(p)(1+\nu(\nu(p)).$$
Note that $\nu(\nu(p))$ and $\nu(p)$ are well-defined since $\nu(p)\overline{R}\ne 0$ and $\overline{R}$ has no finite subfield. The desired result follows from Theorem \ref{main_theorem_construction}.
\end{proof}
\noindent
Note that the notation $\nu(p)$ in Theorem \ref{main_theorem_existence} is compatible with the previously defined valuation.
Suppose that a discrete valuation ring $R$ with valuation $\nu$
and maximal ideal $\mathfrak{m}$
has $\overline{R}$ as its residue ring. Then $\nu(p)$ is equal to a  power of the maximal ideal generated by $p$, that is, $pR= \mathfrak{m}^{\nu(p)}$ as we noted in the proof of Theorem \ref{main_theorem_existence}.


\section{Functoriality}\label{Functoriality}
The main purpose of this section is to give a generalized version of Fact \ref{Witt_modern} for the ramified case.
For a prime number $p$ and a positive integer $e$, let $\CC_{p,e}$ be a category consisting of the following data:
\begin{itemize}
	\item $\ob(\CC_{p,e})$ is the family of complete discrete valuation rings of mixed characteristic having perfect residue fields of characteristic $p$ and the ramification index $e$; and
	\item $\mor_{\CC_{p,e}}(R_1,R_2):=\Hom(R_1,R_2)$ for $R_1$ and $R_2$ in $\ob(\CC_{p,e})$.
\end{itemize}
Let $\CR_{p,e}^{n}$ be a category consisting of the following data:
\begin{itemize}
	\item For $n\le e$, $\ob(\CR_{p,e}^{n})$ is the family of principal Artinian local rings $\overline{R}$ of length $n$ with perfect residue fields of characteristic $p$, and for $n>e$, $\ob(\CR_{p,e}^{n})$ is the family of principal Artinian local rings $\overline{R}$ of length $n$ with perfect residue fields of characteristic $p$ such that $p\in \overline{\fm}^{e}\setminus\overline{\fm}^{e+1}$ where $\overline{\fm}$ is the maximal ideal of $\ov{R}$; and
	\item $\mor_{\CR_{p,e}^{n}}(\overline{R_1},\overline{R_2}) :=\Hom(\overline{R_1},\overline{R_2})$ for $\overline{R_1}$ and $\ov{R_2}$ in $\ob(\CR_{p,e,}^{n})$,
\end{itemize}
Note that for $e_1,e_2\ge 1$ and for $n\le e_1,e_2$, two categories $\CR_{p,e_1}^n$, $\CR_{p,e_2}^n$ are the same. For each $m>n$, let $\Pr_n:\ \CC_{p,e}\rightarrow \CR_{p,e}^n$ and $\Pr^m_n:\ \CR_{p,e}^m\rightarrow \CR_{p,e}^n$ be the canonical functors respectively.

\begin{definition}\label{n-liftable}
Fix a prime number $p$ and a positive integer $e$.
\begin{enumerate}
	\item
We say that the category $\CC_{p,e}$ is \emph {$n$-liftable}  if there is a  functor $\L:\CR_{p,e}^n \longrightarrow \CC_{p,e}$ which satisfies the following:
 \begin{itemize}
 \item
 $(\Pr_n\circ \L)(\ov{R})\cong \ov{R}$ for each $\overline{R}$ in $\ob(\CR_{p,e})$.
  \item
 $\Pr_1 \circ\L$ is equivalent to $\Pr^{n}_1$.
 \item
 $\L\circ \Pr_n$ is equivalent to  $\id_{\CC_{p,e}}$, the identity functor.

 \end{itemize}
 We say that $\L$ is a \emph{$n$-th lifting functor} of $\CC_{p,e}$.
	\item The \emph{lifting number} for $\CC_{p,e}$ is the smallest positive integer $n$ such that $\CC_{p,e}$ is $n$-liftable. If there is no such $n$, we define the lifting number for  $\CC_{p,e}$ to be $\infty$.
\end{enumerate}
\end{definition}
\noindent
We note that the condition  $(\Pr_n\circ \L)(\ov{R})\cong \ov{R}$ in  the first bullet point in Definition \ref{n-liftable}.(1) is weaker than the condition that $\Pr_n\circ \L$ is equivalent to $\id_{\CR_{p,e}^n}$.
By Example \ref{example_tame optimal}, $\Pr_n\circ \L$ is not equivalent to $\id_{\CR_{p,e}^n}$ in general.

\begin{remark}\label{basic_on_n-liftable}
\begin{enumerate}
  \item
Suppose that  there is a $n$-th lifting functor $\L :\CR_{p,e}^n\rightarrow \CC_{p,e}$.
For any $\ov{R}$ in $\ob(\CR_{p,e})$, $\L (\ov{R})$ is the unique (up to isomorphism) object in $\ob(\CC_{p,e})$ which has $\ov{R}$ as its $n$-th residue ring.
Indeed, suppose that $R$ in $\ob(\CC_{p,e})$ has  $\ov{R}$ as its $n$-th residue ring. Since $\L\circ \Pr_n$ is equivalent to the identity functor $\id_{\CC_{p,e}}$, $R=\id_{\CC_{p,e}}(R)$ is isomorphic to $(\L\circ \Pr_n)(R)=\L (\ov{R})$.

	\item
The lifting number for $\CC_p$ is $1$ by \textup{Fact} \ref{Witt_modern}. We will see that the lifting number for $\CC_{p,e}$ is always larger than $e$ whenever $e>1$ in \emph{Corollary \ref{lifting number >e}}.

\item
For $n\ge e$, a functor $\L_{n+1}:=\L_n\circ \Pr^{n+1}_n$  is a $(n+1)$-th lifting functor of $\CC_{p,e}$ for any $n$-th lifting functor $\L_n :\CR_{p,e}^n\rightarrow \CC_{p,e}$.
The proof is as follows: For $\ov{R}$ in $\ob(\CR_{p,e}^{n+1})$, there exists a ring $R$ in $\ob(\CC_{p,e})$ which satisfies $\Pr_{n+1}(R)=\ov{R}$ as noted in the proof of \emph{Theorem \ref{main_theorem_existence}}.
 Since there is a unique object in $\ob(\CC_{p,e})$ which has  $\Pr_n(R)$ as its $n$-th residue ring by \emph{Remark \ref{basic_on_n-liftable}.(1)}, we have that
$$({\Pr}_{n+1}\circ\L_{n+1})\left(\ov{R}\right) ={\Pr}_{n+1}\circ(\L_n\circ {\Pr}^{n+1}_{n})\left(\ov{R}\right)={\Pr}_{n+1}(R)=\ov{R}.$$
Also,
${\Pr}_1 \circ {\L}_{n+1}= ({\Pr}_1 \circ {\L}_n )\circ {\Pr}_n^{n+1}$ is equivalent to ${\Pr}_1^n \circ {\Pr}_n^{n+1}= {\Pr}_1^{n+1}$ and
$$
{\L}_{n+1}\circ{\Pr}_{n+1} =({\L}_{n}\circ{\Pr}_n^{n+1})\circ{\Pr}_{n+1}
  ={\L}_{n}\circ{\Pr}_n
  $$
   is equivalent to $\id_{\CC_{p,e}}$.

\end{enumerate}
\end{remark}


\begin{proposition}\label{proposition lifting functor}
For $1\le i\le3$, let $(R_i,\fm_i,\nu_i)$ be a complete discrete valuation ring of mixed characteristic $(0,p)$ with a  perfect residue field and let $\pi_i$ be a uniformizer of $R_i$.
 For $\phi^{1,2}:R_{1,(n_1)}\longrightarrow R_{2,(n_{2})}$ and $\phi^{2,3}:R_{2,(n_2)}\longrightarrow R_{3,(n_{3})}$, suppose that there are liftings $g^{1,2}:R_1\longrightarrow R_{2}$ and $g^{2,3}:R_2\longrightarrow R_{3}$  of $\phi^{1,2}$ and $\phi^{2,3}$ respectively.

 If $\nu_1(p)=\nu_2(p)$, then $g=g^{2,3}\circ g^{1,2}$ is a  lifting of $\phi^{2,3}\circ \phi^{1,2}$. Moreover $g$ is the unique lifting of $\phi^{2,3}\circ \phi^{1,2}$ when $n_3>M(R_2)\nu_2(p)\nu_3(p)$.
\end{proposition}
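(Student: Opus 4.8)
The plan is to verify directly the two defining conditions of a lifting from Definition~\ref{def_lifting homomorphim} for the composite, using the uniformizer-free reformulation of Proposition~\ref{proposition_lifting homomorphism}.(2), and then to invoke Theorem~\ref{main_theorem_construction} for the uniqueness clause. Write $\psi=\phi^{2,3}\circ\phi^{1,2}$ and $g=g^{2,3}\circ g^{1,2}$.

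First I would record two preliminary facts. (a) Any ring homomorphism $h$ between complete discrete valuation rings of characteristic $0$ with residue characteristic $p$ is injective, since its kernel is a prime ideal not containing $p=h(p)$ and hence equals $(0)$; moreover $h$ fixes $p$, so a short computation comparing the values of $p$ shows $\widetilde{\nu}(h(y))=\widetilde{\nu}(y)$ for all $y$, i.e. $h$ extends to an isometry of fraction fields. In particular $g^{1,2}$ extends to an isometry $L_1\to L_2$ and $g^{2,3}$ to an isometry $L_2\to L_3$. (b) Since $\nu_1(p)=\nu_2(p)$ by hypothesis and $g^{1,2}\colon L_1\to L_2$ is an isometry, Lemma~\ref{M_1=M_2}.(2) gives $M(R_1)=M(R_2)$; this is the only place where the hypothesis $\nu_1(p)=\nu_2(p)$ is used.

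For the first assertion, the residue-field condition is immediate: composing $\pr_{2,1}\circ g^{1,2}=\phi^{1,2}_{red,1}\circ\pr_{1,1}$ with $\pr_{3,1}\circ g^{2,3}=\phi^{2,3}_{red,1}\circ\pr_{2,1}$ gives $\pr_{3,1}\circ g=\psi_{red,1}\circ\pr_{1,1}$. The substantive step is the valuation estimate, for which I would use Proposition~\ref{proposition_lifting homomorphism}.(2). Fix $x\in R_1$. Applying it to $g^{1,2}$, choose a representative $\beta'$ of $\phi^{1,2}(x+\fm_1^{n_1})$ with $\widetilde{\nu_2}(g^{1,2}(x)-\beta')>M(R_1)$; applying it to $g^{2,3}$ and to the element $\beta'\in R_2$, choose a representative $\gamma$ of $\phi^{2,3}(\beta'+\fm_2^{n_2})$ with $\widetilde{\nu_3}(g^{2,3}(\beta')-\gamma)>M(R_2)$. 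Since $\beta'+\fm_2^{n_2}=\phi^{1,2}(x+\fm_1^{n_1})$, the element $\gamma$ is a representative of $\psi(x+\fm_1^{n_1})$. Then $g(x)-\gamma=g^{2,3}(g^{1,2}(x)-\beta')+(g^{2,3}(\beta')-\gamma)$: the first summand has $\widetilde{\nu_3}$-value $\widetilde{\nu_2}(g^{1,2}(x)-\beta')>M(R_1)$ because $g^{2,3}$ is an isometry, and the second has $\widetilde{\nu_3}$-value $>M(R_2)=M(R_1)$, so the ultrametric inequality gives $\widetilde{\nu_3}(g(x)-\gamma)>M(R_1)$. With the residue-field condition, Proposition~\ref{proposition_lifting homomorphism}.(2) now shows $g$ is a lifting of $\psi$.

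For the uniqueness clause, by (b) the condition $n_3>M(R_2)\nu_2(p)\nu_3(p)$ reads $n_3>M(R_1)\nu_1(p)\nu_3(p)$, and $\Hom(R_{1,n_1},R_{3,n_3})\ni\psi$ is nonempty, so Theorem~\ref{main_theorem_construction} applied to the pair $(R_1,R_3)$ produces a unique $(n_1,n_3)$-lifting of $\psi$, which must coincide with $g$; the remaining hypothesis $n_2>M(R_1)\nu_1(p)\nu_2(p)$ is what guarantees, again via Theorem~\ref{main_theorem_construction}, that $g^{1,2}$ is the unique lifting $\L_{n_1,n_2}(\phi^{1,2})$, so that the result can be phrased in terms of the lifting functor $\L$. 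I expect the main obstacle to be the bookkeeping with representatives in the valuation estimate: since $g^{2,3}$ need not induce $\phi^{2,3}$ (Example~\ref{example_tame optimal}.(2)), one cannot obtain a representative of $\psi(x+\fm_1^{n_1})$ simply by reducing $g^{2,3}(\beta')$ modulo $\fm_3^{n_3}$, so it is crucial to use the ``for all $x$'' characterization of Proposition~\ref{proposition_lifting homomorphism}.(2) rather than only the condition at the uniformizer.
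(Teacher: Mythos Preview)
Your proof is correct and follows essentially the same approach as the paper's: both use Lemma~\ref{M_1=M_2}.(2) to get $M(R_1)=M(R_2)$, split $g(x)-\gamma$ into $g^{2,3}(g^{1,2}(x)-\beta')+(g^{2,3}(\beta')-\gamma)$, apply the isometry property of $g^{2,3}$ together with the ultrametric inequality, and then invoke Theorem~\ref{main_theorem_construction} for uniqueness. The only cosmetic difference is that the paper carries out the estimate at the uniformizer $\pi_1$ and appeals to Definition~\ref{def_lifting homomorphim} and Proposition~\ref{proposition_lifting homomorphism}, whereas you work with a general $x\in R_1$ via the equivalent characterization in Proposition~\ref{proposition_lifting homomorphism}.(2); the computations are identical.
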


\begin{proof}
By Fact \ref{fact:valuationring_homo_isometry}, the liftings $g^{1,2}$ and $g^{2,3}$ are isometries.
Also, since both $\widetilde{\nu_2}$ and $\widetilde{\nu_3}$ are normalized, we have $\widetilde{\nu_3}(g^{2,3}(x))=\widetilde{\nu_2}(x)$ for any $x\in R_2$. By Lemma \ref{M_1=M_2}, $M(R_1)=M(R_2)$, say $M$.
Since $g^{1,2}$ is a lifting of $\phi^{1,2}$, there is a representative $\beta_1$  of $\phi^{1,2}(\pi_1+\fm_1^{n_1})$
such that $\wi{\nu_2}(g^{1,2} (\pi_1)-\beta_1)>M$.
We note that $\beta_1$ is a uniformizer of $R_2$. Since
$g^{2,3}$ is a lifting of $\phi^{2,3}$, there is a representative $\beta_2$  of $$(\phi^{2,3} \circ \phi^{1,2} ) (\pi_1+ \fm_1^{n_1})=\phi^{2,3}(\beta_1 +\fm_2^{n_2} )$$
such that $\wi{\nu_3}(g^{2,3} (\beta_1)-\beta_2)>M$.

 If we write $g^{1,2} (\pi_1) =\beta_1 +x_{M}$ where $\widetilde{\nu_2}(x_{M}) >M$, then $$g(\pi_1)=g^{2,3} (g^{1,2}(\pi_1))
=g^{2,3}(\beta_1 + x_M).$$
Since $\widetilde{\nu_3}(g^{2,3}(\beta_1) -\beta_2 )>M$ and $\widetilde{\nu_3}(g^{2,3}(x_M))=\widetilde{\nu_2}(x_M )>M$,
$$
\widetilde{\nu_3} \big(g(\pi_1)-\beta_2\big)
=\widetilde{\nu_3} \big(g^{2,3}(\beta_1)-\beta_2+ g^{2,3}(x_M)\big)
>M.
$$
The equality
$(\phi^{2,3}\circ \phi^{1,2})_{red,1} \circ \pr_{1,1}=\pr_{3,1} \circ g$ follows directly from $g=g^{2,3} \circ g^{1,2}$. By  Proposition \ref{proposition_lifting homomorphism}, $g$ is a lifting of $\phi^{2,3}\circ \phi^{1,2}$.

When $n_3>M(R_2)\nu_2(p)\nu_3(p)=M(R_1)\nu_1(p)\nu_3(p)$,
$g$ is the unique lifting of $\phi^{2,3} \circ \phi^{1,2}$ by Theorem \ref{main_theorem_construction}.
\end{proof}

\begin{theorem}\label{main theorem lifting functor}

The lifting number for $\CC_{p,e}$ is finite.
More precisely, $\CC_{p,e}$ is $(e+e\nu(e)+1)$-liftable.
Here $\nu(e)$ denotes the exponent $n$ such that $e$ generates an ideal $\mathfrak{m}^{n}$ of  $R$ in $ \ob(\CC_{p,e})$ where $\mathfrak{m}$ denotes the maximal ideal of $R$.
The value $\nu(e)$ depends only on the prime number $p$ and the ramification index $e$, in particular $\nu(e)$ is independent of the choice of $R$ in $ \ob(\CC_{p,e})$.

\end{theorem}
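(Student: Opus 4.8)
The plan is to set $n:=e+e\nu(e)+1$ and to assemble the required functor $\L\colon\CR_{p,e}^{n}\to\CC_{p,e}$ out of Theorem~\ref{main_theorem_existence} (on objects), Theorem~\ref{main_theorem_construction} (on morphisms) and Proposition~\ref{proposition lifting functor} (for functoriality and naturality). First observe that $n>e$ in every case, so each $\ov R\in\ob(\CR_{p,e}^{n})$ satisfies $p\in\ov\fm^{e}\setminus\ov\fm^{e+1}$; in particular $p\neq0$ in $\ov R$, hence $\ov R$ contains no finite subfield (such a subfield would contain $\mathbb F_{p}$, forcing $p=0$), and the exponent $\nu(p)$ attached to $\ov R$ is $e$. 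Writing $e=p^{a}u$ with $p\nmid u$, the integer $u$ is a unit of $\ov R$, so $\nu(e)=a\nu(p)=ae$; thus $\nu(e)$ depends only on $p$ and $e$, the number $n$ is well defined, and $n>\nu(p)+\nu(p)\nu\big(\nu(p)\big)=e+e\nu(e)$. Consequently Theorem~\ref{main_theorem_existence} applies to every $\ov R\in\ob(\CR_{p,e}^{n})$ and produces, uniquely up to isomorphism, a complete discrete valuation ring of characteristic $0$ whose $n$-th residue ring is $\ov R$; by the remark following that theorem its valuation has $\nu(p)=e$, so it lies in $\ob(\CC_{p,e})$. Fix, once and for all, such a ring $\L(\ov R)$ together with a ring isomorphism $\iota_{\ov R}\colon(\L(\ov R))_{n}\to\ov R$.

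On morphisms: given $\phi\in\Hom(\ov R_{1},\ov R_{2})$, transport it through the chosen isomorphisms to $\tilde\phi:=\iota_{\ov R_{2}}^{-1}\circ\phi\circ\iota_{\ov R_{1}}\in\Hom\big((\L(\ov R_{1}))_{n},(\L(\ov R_{2}))_{n}\big)$, and define $\L(\phi):=\L_{n}(\tilde\phi)$, the $n$-lifting of Definition~\ref{def_lifting homomorphim}. The numerical input making Theorem~\ref{main_theorem_construction} applicable here, \emph{uniformly} in the objects, is the estimate
\[ M(S)\,e^{2}\ \le\ e+e\nu(e)\ <\ n\qquad\text{for every }S\in\ob(\CC_{p,e}), \]
which is exactly the bound drawn from Lemma~\ref{different bound} in the proof of Theorem~\ref{main_theorem_existence} (equivalently, the second bullet of Corollary~\ref{iso_completefields_explicitbound_1}) and whose right-hand side is independent of $S$. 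Hence each $\L(\phi)$ exists and is the unique $n$-lifting of $\tilde\phi$. Functoriality is then formal: the transport of $\id_{\ov R}$ is $\id_{(\L(\ov R))_{n}}$, so $\L(\id_{\ov R})=\id_{\L(\ov R)}$ by uniqueness; and because all rings $\L(\ov R)$ have $\nu(p)=e$ while all the relevant lengths equal $n$, which exceeds every bound $M(\cdot)e^{2}$, Proposition~\ref{proposition lifting functor} gives $\L(\phi_{2}\circ\phi_{1})=\L_{n}(\tilde\phi_{2}\circ\tilde\phi_{1})=\L_{n}(\tilde\phi_{2})\circ\L_{n}(\tilde\phi_{1})=\L(\phi_{2})\circ\L(\phi_{1})$.

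It then remains to check the three clauses of Definition~\ref{n-liftable}. The first, $(\Pr_{n}\circ\L)(\ov R)\cong\ov R$, is built into the construction via $\iota_{\ov R}$. For the second, $\Pr_{1}\circ\L\simeq\Pr_{1}^{n}$, the natural isomorphism is the map $(\L(\ov R))_{1}\to\ov{R}/\ov{\fm}$ induced by $\iota_{\ov R}$ on residue fields, and naturality amounts to the fact that the residue-field map induced by $\L(\phi)$ is $\tilde\phi_{red,1}$, which is precisely the second defining clause of an $n$-lifting. For the third, $\L\circ\Pr_{n}\simeq\id_{\CC_{p,e}}$, fix $R\in\ob(\CC_{p,e})$, regard the canonical isomorphism $\iota_{R_{n}}\colon(\L(R_{n}))_{n}\to R_{n}$ as an element of $\Hom\big((\L(R_{n}))_{n},R_{n}\big)$, and set $\theta_{R}:=\L_{n}(\iota_{R_{n}})\colon\L(R_{n})\to R$; this is an isomorphism because $\iota_{R_{n}}$ is one. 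Naturality of $(\theta_{R})_{R}$ uses Proposition~\ref{proposition lifting functor} once more, together with the observation recorded right after Definition~\ref{def_lifting homomorphim} (and uniqueness) that every morphism $g\colon R\to R'$ of $\CC_{p,e}$ is itself the $n$-lifting $\L_{n}(\pr^{n,n}(g))$ of its own reduction: both $\theta_{R'}\circ\L(\Pr_{n}g)$ and $g\circ\theta_{R}$ then simplify, via Proposition~\ref{proposition lifting functor}, to $\L_{n}\big(\pr^{n,n}(g)\circ\iota_{R_{n}}\big)$.

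I expect the one real difficulty to be the coordination rather than any isolated step: one must exhibit a single length $n$ — and the proposed $n=e+e\nu(e)+1$ is essentially forced — that is at once large enough for Theorem~\ref{main_theorem_existence} to supply the objects, for Theorem~\ref{main_theorem_construction} to supply and pin down the lifted morphisms, and for Proposition~\ref{proposition lifting functor} to make these liftings compose, all uniformly over $\ob(\CC_{p,e})$. Everything hinges on the bound $M(S)e^{2}\le e+e\nu(e)$ with right-hand side independent of $S$, hence on Lemma~\ref{different bound}; granting it, the categorical bookkeeping above is routine, and this establishes $(e+e\nu(e)+1)$-liftability, whence the finiteness of the lifting number (with higher liftability then coming for free from Remark~\ref{basic_on_n-liftable}).
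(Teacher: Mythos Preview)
Your proposal is correct and follows essentially the same approach as the paper: use Theorem~\ref{main_theorem_existence} on objects, Theorem~\ref{main_theorem_construction} on morphisms, and Proposition~\ref{proposition lifting functor} for composition, all powered by the uniform bound $M(S)e^{2}\le e+e\nu(e)$ coming from Lemma~\ref{different bound}. The paper's proof is considerably more terse---it dispatches the verification of Definition~\ref{n-liftable} in a single sentence---whereas you have spelled out the transport isomorphisms $\iota_{\ov R}$ and the naturality checks explicitly; this extra care is welcome but not a different method.
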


\begin{proof}
Suppose $n$ is bigger than $e+e\nu(e)$. For any $\ov{R}, \ov{R_1}$ and $\ov{R_2}$ in $\ob(\CR_{p, e}^n)$, by Theorem \ref{main_theorem_existence}, we define $\L_n (\ov{R})$ to be a unique ring $R$ in $\ob(\CC_{p,e})$  which satisfies $\Pr_n(R)=\ov{R}$. By Lemma \ref{lem:upperbound_M(R)}, $e+e\nu(e)\ge M(R)e^2$.
By Theorem \ref{main_theorem_construction}, for any $\phi:\ov{R_1} \longrightarrow \ov{R_2}$, there exists a unique $n$-th lifting map $\L(\phi):\L(\ov{R_1})\longrightarrow \L(\ov{R_2})$, and hence we obtain a lifting functor $\L_n:\CR_{p,e}^n\longrightarrow\CC_{p,e}$ by Proposition \ref{proposition lifting functor}.
\end{proof}

\noindent Example \ref{ex:example_tame optimal} can be generalized as follows.
\begin{proposition}\label{tame optimal pro}
Let $R_1/W(k)$ and $R_2/W(k)$ be totally  ramified extensions of degree $e$. Then $R_{1,(e)}$ is isomorphic to $R_{2,(e)}$ as $W(k)$-algebras.
\end{proposition}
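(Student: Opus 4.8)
The plan is to compute each $e$-th residue ring $R_{i,e}=R_i/\fm_i^e$ explicitly and observe that the two answers coincide. The starting observation is that total ramification of degree $e$ forces $\fm_i^e=pR_i$ (since $\nu_i(p)=e$ for the normalized integer-valued valuation $\nu_i$ with $\nu_i(\pi_i)=1$), so $R_{i,e}=R_i/pR_i$. In particular the $W(k)$-algebra structure map $W(k)\to R_{i,e}$ kills $pW(k)$ and so factors through $k=W(k)/pW(k)$; thus $R_{i,e}$ is really a $k$-algebra with $k$ acting via the residue map, and it suffices to produce a $k$-algebra isomorphism.

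Next I would invoke Lemma \ref{witt_embedding}: $W(k)$ embeds in $R_i$, the ring $R_i$ is a free $W(k)$-module of rank $e$, and $R_i=W(k)[\pi_i]$ for a uniformizer $\pi_i$. Hence, writing $K$ for the fraction field of $W(k)$ and $L_i$ for that of $R_i$, the minimal polynomial $f_i\in W(k)[x]$ of $\pi_i$ over $K$ has degree $e$ and $R_i\cong W(k)[x]/(f_i(x))$ as $W(k)$-algebras. Since $\pi_i$ is a uniformizer of the totally ramified degree-$e$ extension $L_i/K$, the polynomial $f_i$ is Eisenstein over $W(k)$: every non-leading coefficient lies in $pW(k)$, so the reduction $\bar f_i\in k[x]$ equals $x^e$. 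Reducing the presentation modulo $p$ then gives, as $W(k)$-algebras,
$$
R_{i,e}\;=\;R_i/pR_i\;\cong\;W(k)[x]/\bigl(p,\,f_i(x)\bigr)\;\cong\;k[x]/\bigl(\bar f_i(x)\bigr)\;=\;k[x]/(x^e),
$$
with $W(k)$ acting on the right through $W(k)\twoheadrightarrow k$.

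Since the same ring $k[x]/(x^e)$ arises for $i=1$ and $i=2$, composing the two isomorphisms yields the desired $W(k)$-algebra isomorphism $R_{1,e}\cong R_{2,e}$ — concretely, the one carrying the class of $\pi_1$ to the class of $\pi_2$. The whole argument is a short direct computation; the only step demanding a little care is the assertion that $f_i$ is Eisenstein, equivalently that $\bar f_i=x^e$, which is the standard structure theory of totally ramified extensions of complete discretely valued fields and follows at once by inspecting the Newton polygon of $f_i$ with respect to the normalized valuation $\ov{\nu_i}$ (a single segment of slope $-1/e$). I do not expect any genuine obstacle here.
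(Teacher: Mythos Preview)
Your proof is correct and follows essentially the same route as the paper: both arguments identify $R_{i,e}=R_i/\fm_i^e=R_i/pR_i$ with $W(k)[x]/(p,f_i)\cong k[x]/(x^e)$ via the Eisenstein minimal polynomial $f_i$ of a uniformizer, and conclude by comparing the two copies of $k[x]/(x^e)$.
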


\begin{proof}
Let $\pi_i$  be a uniformizer of $R_i$ and let $\nu_i$ be the valuation corresponding to $R_i$ for $i=1,2$. By the theory of totally ramified extensions (see Chapter 2 of \cite{L} for example), the monic irreducible polynomial $f_i$ of $\pi_i$ over $W(k)$ is an Eisenstein polynomial for $i=1, 2$. If we write
$f_i=x^e + a_{i,e-1}x^{e-1}+ \cdots +a_{i,1}x+a_{i,0}$, then $\nu_i(p)=\nu_i(a_{i,0})=e$ and $\nu_i(a_{i,j})\ge e$ for $i=1,2$ and $j=1,2, \dots ,e-1$. This shows

\begin{align*}
R_{i,(e)}
&=\frac{W(k)[\pi_i]}{(\pi_i)^e}\  \cong\  \frac{W(k)[x]}{(p,f_i)}\\
&=\frac{k[x]}{(x^e + \cdots +a_{i,1}x+a_{i,0})}\\
&=\frac{k[x]}{(x^e)},
\end{align*}

and hence, $R_{1,(e)}$ is isomorphic to $R_{2,(e)}$ as $W(k)$-algebras.
\end{proof}

For the tame case, we can calculate the lifting number. We denote a primitive $n$-th root of unity by $\zeta_n$.

\begin{lemma}\label{tame distinct}
Let $k$ be a perfect field of characteristic $p$ and let $K$ be the fraction field of $W(k)$. Let $e$ be a positive integer prime to $p$. Suppose that there is a prime divisor $l$ of $e$ such that $\zeta_{l^n}$ is in $k^{\times}$ and $\zeta_{l^{n+1}}$ is not in $k^{\times}$ for some $n>0$. Then there are two  totally ramified extensions $L_1$ and $L_2$ of degree $e$ over $K$ which are not isomorphic over $\BQ$.
\end{lemma}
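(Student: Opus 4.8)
The plan is to produce the two non-isomorphic totally ramified extensions explicitly as radical extensions, using the hypothesis on roots of unity to obstruct an isomorphism over $\BQ$. Write $e = l^m \cdot e'$ with $l \nmid e'$, and fix a uniformizer of $W(k)$, namely $p$ (so $K$ has $p$ as a uniformizer). Since $l \mid e$ and $l \neq p$, I would set $L_1 = K(\sqrt[e]{p})$ and $L_2 = K(\sqrt[e]{up})$ where $u \in W(k)^\times$ is a unit chosen so that $u$ is \emph{not} an $l$-th power in the residue field $k^\times$ — such a $u$ exists because $\zeta_{l^{n+1}} \notin k^\times$ while $\zeta_{l^n} \in k^\times$ forces $k^{\times}/(k^{\times})^{l}$ to be nontrivial (the $l$-power map on $k^\times$ is not surjective; concretely $\zeta_{l^n}$ itself, or an appropriate root of it, witnesses a nontrivial class). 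Both $x^e - p$ and $x^e - up$ are Eisenstein polynomials over $W(k)$ since $\nu(p) = 1 = \nu(up)$ and all intermediate coefficients vanish, so $L_1$ and $L_2$ are indeed totally ramified of degree $e$ over $K$; this is the theory of Eisenstein polynomials cited in the proof of Proposition \ref{tame optimal pro}.

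Next I would show $L_1 \not\cong L_2$ over $\BQ$. The key point is that any $\BQ$-isomorphism $\sigma : L_1 \to L_2$ restricts to an isomorphism of valued fields (there is a unique valuation extending that of $\BQ$ on each, up to equivalence, by completeness/henselianity), and it must carry $W(k)$ to $W(k)$: indeed $W(k)$ is the unique absolutely unramified complete discrete valuation subring with residue field $k$, and $\sigma$ induces an automorphism of the residue field $k$. So $\sigma$ would give a $W(k)$-semilinear isomorphism; composing with the induced residue automorphism of $k$, one reduces to comparing the two extensions up to an automorphism $\tau$ of $W(k)$ lifting a field automorphism of $k$. Now $\sigma(\sqrt[e]{p})$ is an element of $L_2$ whose $e$-th power is $\sigma(p) = p$ (as $\sigma$ fixes $\BQ$, hence $p$). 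So $L_2$ contains an element $\theta$ with $\theta^e = p$; comparing with $\sqrt[e]{up} \in L_2$, the ratio $\theta / \sqrt[e]{up}$ is an $e$-th root of $u^{-1}$ lying in $L_2$. Reducing to the residue field, and using that $L_2/K$ is totally ramified so its residue field is still $k$, I get that $u^{-1}$ — more precisely its image under the residue automorphism $\bar\tau$ — becomes an $e$-th power, hence an $l$-th power, in $k^\times$. Since the $l$-power class of $u$ is nontrivial and is preserved by $\bar\tau$ (automorphisms of $k$ permute the nontrivial classes in $k^\times/(k^\times)^l$ but cannot send a nontrivial class to the trivial one), this is a contradiction.

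The main obstacle is the reduction step: showing that a $\BQ$-isomorphism must respect the Witt subring and hence be "defined over $W(k)$ up to a residue-field twist," and then extracting the root-of-unity obstruction at the residue level cleanly. This requires Lemma \ref{witt_embedding} (to know $W(k) \subset L_i$ is canonically the Witt ring of the residue field), Theorem \ref{Witt_classic} (uniqueness of $W(k)$ and functoriality, so that $\sigma|_{W(k)}$ is the lift of a residue automorphism), and a careful check that reducing the equation $(\theta/\sqrt[e]{up})^e = u^{-1}$ modulo the maximal ideal is legitimate — for this one notes $\theta/\sqrt[e]{up}$ has valuation $0$, so it is a unit and its reduction makes sense. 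A secondary subtlety is verifying that $u$ with nontrivial $l$-power class actually exists from the stated hypothesis: the hypothesis $\zeta_{l^n} \in k^\times, \zeta_{l^{n+1}} \notin k^\times$ says precisely that the $l^{n+1}$-th cyclotomic polynomial has a root-situation forcing $[k(\zeta_{l^{n+1}}):k] = l$ or that $\zeta_{l^n}$ has no $l$-th root in $k$; either way $k^\times \neq (k^\times)^l$, giving the desired $u$. Once these reductions are in place, the contradiction is immediate and the proof is complete.
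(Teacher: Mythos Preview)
Your core argument is correct, but it differs from the paper's proof, and some of your scaffolding is unnecessary.

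\textbf{Comparison with the paper.} The paper makes the specific choice $u=\zeta_{l^n}$, writes $\sqrt[e]{p\zeta_{l^n}}=\sqrt[e]{p}\,\zeta_{el^n}$, and tracks the $\BQ$-conjugates directly: any $\BQ$-embedding sends $\sqrt[e]{p}\,\zeta_{el^n}$ to $\sqrt[e]{p}\,\zeta_{el^n}^{\,k}$ with $\gcd(k,l)=1$, so an isomorphism $L_2\to L_1$ would force $\zeta_{el^n}^{\,k}\in L_1$ and hence $\zeta_{l^{n+1}}\in L_1$, contradicting that $L_1/K$ is totally ramified. Your route is instead a residue--Kummer argument: from $\theta^e=p$ in $L_2$ you get $(\theta/\sqrt[e]{up})^e=u^{-1}$, and since the ratio is a unit its reduction shows $\bar u\in (k^\times)^e\subset (k^\times)^l$, contradicting the choice of $u$. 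Both arguments are valid; yours is slightly more general (any $u$ with $\bar u\notin (k^\times)^l$ works, and you correctly observe that $\zeta_{l^n}$ itself is such an element), while the paper's stays entirely at the level of algebraic conjugates over $\BQ$ and never needs to pass to the residue field of $L_2$.

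\textbf{What you can drop.} The discussion of $\sigma$ being an isometry, of $\sigma$ carrying $W(k)$ to $W(k)$, and of the induced residue automorphism $\bar\tau$ is not needed and, for general perfect $k$, is not obviously justified (when $k/\BF_p$ is transcendental it is not clear that the valuation on $L_i$ is the \emph{unique} one extending $v_p$ on $\BQ$, so your appeal to ``completeness/henselianity'' for uniqueness is shaky). Fortunately none of this is used: the only input from $\sigma$ is $\sigma(p)=p$, which holds for any ring homomorphism, giving $\theta:=\sigma(\sqrt[e]{p})\in L_2$ with $\theta^e=p$. From there the computation $(\theta/\sqrt[e]{up})^e=u^{-1}$ takes place entirely inside the valued field $L_2$; the ratio is a unit because its $e$-th power is, and the reduction to $k$ uses only that $L_2/K$ is totally ramified. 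So your mention of $\bar\tau$ in ``$u^{-1}$ --- more precisely its image under the residue automorphism $\bar\tau$'' should simply be deleted: it is literally $\bar u^{-1}$ that becomes an $e$-th power.
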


\begin{proof}
We have $\zeta_{l^n}$ is in  $W(k)^{\times}$ by Hensel's lemma, and $\zeta_{l^{n+1}}$ is not in $W(k)^{\times}$. Then $L_1 = K(\sqrt[e]{p})$ and $L_2=K(\sqrt[e]{p\zeta_{l^n}})$ are totally  ramified extensions of degree $e$ over $K$. Suppose that there is an  isomorphism $\sigma:L_2\longrightarrow L_1$. Since Galois conjugates of $\sqrt[e]{p}$ and $\zeta_{el^{n}}$ over $\BQ$ are of the form $\sqrt[e]{p}\zeta_{e}^i$ and $\zeta_{el^{n}}^j$ respectively for some $i$ and $j$ with $(j,e)=1$,
 $$
 \sigma \left(\sqrt[e]{p\zeta_{l^n}}\right)=\sigma \left(\sqrt[e]{p}\zeta_{el^{n}}\right)=\sqrt[e]{p}\zeta_{el^{n}}^k
 $$
 for some $k$ prime to $l$. In particular, $L_1$ contains both $\sqrt[e]{p}$ and $\sqrt[e]{p}\zeta_{el^{n}}^k$,
 and hence, $\zeta_{l^{n+1}}$ is in $L_1$. This is a contradiction because $L_1/K$ is totally ramified.
\end{proof}

\begin{corollary}\label{tame optimal cor}
Suppose that $p$ does not divide $e$ and $e>1$. Then $e+1$ is the lifting number for $\CC_{p,e}$.
\end{corollary}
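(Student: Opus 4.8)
The plan is to prove the two inequalities $(\text{lifting number for }\CC_{p,e})\le e+1$ and $\ge e+1$ separately. For the upper bound I would appeal directly to Theorem \ref{main theorem lifting functor}: since $p\nmid e$, for every $R\in\ob(\CC_{p,e})$ the image of $e$ in the residue field is nonzero, so $e$ is a unit of the local ring $R$, whence $eR=R=\mathfrak{m}^{0}$ and $\nu(e)=0$. Thus $\CC_{p,e}$ is $(e+e\nu(e)+1)=(e+1)$-liftable, so the lifting number is $\le e+1$.

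For the lower bound I would show that $\CC_{p,e}$ is \emph{not} $e$-liftable, by exhibiting two non-isomorphic objects of $\CC_{p,e}$ whose $e$-th residue rings are isomorphic. Fix a prime divisor $l$ of $e$; since $p\nmid e$ we have $l\ne p$. Here is the one delicate step: one must choose a perfect field $k$ of characteristic $p$ satisfying the root-of-unity hypothesis of Lemma \ref{tame distinct}. I would take $k=\BF_{p}$, whose multiplicative group is cyclic of order $p-1$, so with $n$ the $l$-adic valuation of $p-1$ one has $\zeta_{l^{n}}\in k^{\times}$ and $\zeta_{l^{n+1}}\notin k^{\times}$ (alternatively $k=\BF_{p^{d}}$ with $d$ the multiplicative order of $p$ modulo $l$, if one wants $n\ge 1$). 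Put $K=\mathrm{Frac}(W(k))$. Lemma \ref{tame distinct} then produces totally ramified extensions $L_{1},L_{2}$ of $K$ of degree $e$ that are not isomorphic over $\BQ$; let $R_{1},R_{2}$ be their valuation rings, so $R_{1},R_{2}\in\ob(\CC_{p,e})$. Since $L_{i}/K$ is totally ramified of degree $e$ we have $\mathfrak{m}_{i}^{e}=pR_{i}$, so $R_{i,e}=R_{i}/pR_{i}$ is a principal Artinian local ring of length $e$ with (perfect) residue field $k$; as $e\le e$ no extra condition is imposed on $\CR_{p,e}^{e}$, so $R_{i,e}\in\ob(\CR_{p,e}^{e})$. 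By Proposition \ref{tame optimal pro}, $R_{1,e}\cong R_{2,e}$ as $W(k)$-algebras, hence they are isomorphic as objects of $\CR_{p,e}^{e}$.

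Finally I would argue by contradiction. Suppose $\L:\CR_{p,e}^{e}\to\CC_{p,e}$ is an $e$-th lifting functor. From $\L\circ\Pr_{e}\cong\id_{\CC_{p,e}}$ we get $R_{1}\cong\L(R_{1,e})$ and $R_{2}\cong\L(R_{2,e})$; since every functor preserves isomorphisms and $R_{1,e}\cong R_{2,e}$, this forces $R_{1}\cong R_{2}$. But a ring isomorphism $R_{1}\to R_{2}$ extends to an isomorphism of fraction fields $L_{1}\to L_{2}$, which fixes the prime field $\BQ$ pointwise because both fields have characteristic $0$, contradicting Lemma \ref{tame distinct}. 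Hence $\CC_{p,e}$ is not $e$-liftable, so the lifting number is $\ge e+1$, and with the upper bound it equals $e+1$. The main obstacle is the verification that Lemma \ref{tame distinct}'s hypothesis is realizable over some perfect field of characteristic $p$; once $k=\BF_{p}$ is used this is immediate from the finiteness of $\BF_{p}^{\times}$, and the rest is routine bookkeeping with the definition of $n$-liftable and the functoriality set up in this section.
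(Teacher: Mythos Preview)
Your argument is correct and follows essentially the same route as the paper: the upper bound via Theorem~\ref{main theorem lifting functor} with $\nu(e)=0$, and the lower bound by producing two non-isomorphic objects of $\CC_{p,e}$ with isomorphic $e$-th residue rings, combining Lemma~\ref{tame distinct} with Proposition~\ref{tame optimal pro} and the uniqueness encoded in Definition~\ref{n-liftable}.

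The one point worth flagging is your choice of residue field. Lemma~\ref{tame distinct} is proved by taking $L_{1}=K(\sqrt[e]{p})$ and $L_{2}=K(\sqrt[e]{p\zeta_{l^{n}}})$, which coincide when $n=0$; so the lemma tacitly requires $n\ge 1$. Your primary choice $k=\BF_{p}$ fails exactly when $l\nmid p-1$, and you correctly supply the fix $k=\BF_{p^{d}}$ with $d$ the order of $p$ modulo $l$. The paper instead takes $k=\BF_{p}(\zeta_{e})$, which guarantees $\zeta_{l}\in k^{\times}$ for every prime $l\mid e$ and hence $n\ge 1$ automatically; this is a slightly cleaner way to dispatch the hypothesis, but the content is the same.
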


\begin{proof}
Since $\nu(p)=0$,
$e+e\nu(e)+1=e+1$. By Theorem \ref{main theorem lifting functor}, $\CC_{p,e}$ is $(e+1)$-liftable.
Let $\BF_p$ be the prime field of $p$ elements. Let $K$ be the fraction field of the Witt ring $W(k)$ of $k=\BF_p(\zeta_e)$.
 By Lemma \ref{tame distinct}, there are two totally ramified extensions $L_1$ and $L_2$ of degree $e$ over $K$ such that there is no isomorphism between $L_1$ and $L_2$.
 If $\CC_{p,e}$ is $e$-liftable, $L_1$ and $L_2$ are isomorphic over $K$ by  Proposition \ref{tame optimal pro} and it is a contradiction.
\end{proof}
\noindent

\begin{remark}

\emph{Proposition \ref{tame optimal pro}} and \emph{Corollary \ref{tame optimal cor}} show the difference between the unramified case and the tamely ramified case.
We can regard the unramified valued fields of mixed characteristic as the  tamely ramified valued fields having the ramification index $e=1$. If we apply  \emph{Corollary \ref{tame optimal cor}} to $\CC_p$, the lifting number for $\CC_p$ should be $1+1=2$.
However the argument in the proof of \emph{Corollary \ref{tame optimal cor}} does not work for $\CC_p$. For an  unramified  complete discrete valued field $K$,  there is a unique totally ramified extension  of degree $1$ over $K$, that is, $K$ itself.
 Hence the fact that the lifting number for $\CC_p$ is $1$ does not contradict  \emph{Corollary \ref{tame optimal cor}}.

\end{remark}
\bigskip

For the wild case, we have the following example.
Let $R_1=\BZ_2[\sqrt{2}]$ and $R_2=\BZ_2[\sqrt{10}]$. There is no homomorphism between $R_1$ and $R_2$ by Kummer theory. But there is an isomorphism between $R_{1,(6)}$ and $R_{2,(6)}$ because
\begin{align*}
R_{1,(6)}&=\frac{\BZ_2[\sqrt{2}]}{({\sqrt2}^6)}\ \cong\ \frac{\BZ_2 [x]}{(x^2-2, 8)}\\
&=\frac{\BZ_2 [x]}{(x^2-10, 8)}\ \cong\ \frac{\BZ_2[\sqrt{10}]}{({\sqrt2}^6)}\ =\ R_{2,(6)}.
\end{align*}
Note that the last equality holds because $(\sqrt{10})^6\BZ_2[\sqrt{10}]=(\sqrt{2})^6\BZ_2[\sqrt{10}]$. This shows that the lifting number for $\CC_{2, 2}$ is $2+2\nu(2)+1=7$ by Theorem \ref{main theorem lifting functor}. In general, we have a lower bound $e+1$ of the lifting number for the wild case. To prove this, we need the following lemma.

\begin{lemma}\label{wild distinct}
Let $k$ be a perfect field of characteristic $p$ and let $K$ be the fraction field of the Witt ring $W(k)$ of $k$.
 Let $e$ be a positive integer divisible by $p$.
Then there are two  totally ramified extensions $L_1$ and $L_2$ of degree $e$ over $K$ which are not isomorphic over $\BQ$.
\end{lemma}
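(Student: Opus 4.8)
The plan is to distinguish two explicit totally ramified extensions of degree $e$ over $K$ by an \emph{intrinsic} ramification invariant, the different over $W(k)$, and to observe that this invariant already obstructs an abstract field isomorphism. Write $e=p^{a}m$ with $p\nmid m$; since $p\mid e$ we have $a\ge 1$ and $e\ge p\ge 2$, in particular $ae\ge 2$. I would take $L_{1}=K(\pi_{1})$ with $\pi_{1}$ a root of $x^{e}-p$, and $L_{2}=K(\beta)$ with $\beta$ a root of $g(x)=x^{e}+px+p$. Both $x^{e}-p$ and $g$ are Eisenstein over $W(k)$ (whose maximal ideal is $pW(k)$), hence irreducible over $K$, so $L_{1}/K$ and $L_{2}/K$ are totally ramified of degree $e$; their residue fields are $k$, and by Lemma \ref{witt_embedding} their valuation rings are $R_{1}=W(k)[\pi_{1}]$ and $R_{2}=W(k)[\beta]$, with $\pi_{1},\beta$ uniformizers.

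The heart of the proof is to compute the different $\mathfrak{D}_{R_{1}/W(k)}$ twice, under the assumption that an isomorphism exists. Suppose $\sigma\colon L_{1}\to L_{2}$ is a field isomorphism; since $\BQ$ is the prime field this is the same as an isomorphism over $\BQ$. Then $\gamma:=\sigma^{-1}(\beta)\in L_{1}$ satisfies $g(\gamma)=\sigma^{-1}\big(g(\beta)\big)=0$, as $g$ has integer coefficients fixed by $\sigma^{-1}$. Since $g$ is irreducible over $K$ of degree $e=[L_{1}:K]$ we get $L_{1}=K(\gamma)$, and since $g$ is Eisenstein over $W(k)$ the element $\gamma$ is a uniformizer of $R_{1}$, so $R_{1}=W(k)[\gamma]$ by Lemma \ref{witt_embedding}. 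By Lemma \ref{different bound}.(1) applied to the two presentations $R_{1}=W(k)[\pi_{1}]=W(k)[\gamma]$, we get the ideal identity $(e\pi_{1}^{e-1})=\mathfrak{D}_{R_{1}/W(k)}=(e\gamma^{e-1}+p)$ in $R_{1}$, hence $\nu_{1}(e\pi_{1}^{e-1})=\nu_{1}(e\gamma^{e-1}+p)$. Now $\nu_{1}(p)=e$ and $\nu_{1}(e)=a\nu_{1}(p)=ae$ (since $m$ is a unit of $W(k)$), so the left side is $ae+e-1$, while on the right $\nu_{1}(e\gamma^{e-1})=ae+e-1>e=\nu_{1}(p)$ forces the right side to be $e$. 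Thus $ae+e-1=e$, i.e. $ae=1$, contradicting $ae\ge 2$. Hence no such $\sigma$ exists, so $L_{1}$ and $L_{2}$ are not isomorphic over $\BQ$ (a fortiori not over $K$). The same contradiction can be phrased via the discriminants $\mathcal{D}_{R_{i}/W(k)}=\mathrm{Norm}\big(\mathfrak{D}_{R_{i}/W(k)}\big)$ using Lemma \ref{discriminant different}, which is the form convenient for numerical estimates.

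The step I expect to be the real obstacle is conceptual, not computational: an abstract field isomorphism need not respect the valuations or fix $W(k)$, so one cannot directly transport the different over $W(k)$ along $\sigma$. This is exactly why $L_{2}$ is chosen to be generated by a root of a polynomial with \emph{integer} coefficients which is Eisenstein over $W(k)$: the image of such a generator under any field isomorphism is forced to be a root of the same polynomial, hence lies in $L_{1}$ and, by the Eisenstein and degree hypotheses, generates $L_{1}$ over $K$ and generates the valuation ring over $W(k)$ — after which the presentation-independence of the different (Lemma \ref{different bound}) closes the argument. The rest is routine bookkeeping of the valuations $\nu_{i}$ and the inequality $ae+e-1>e$, which is precisely where the hypothesis $p\mid e$ is used (it fails, and the argument collapses, in the tame case $p\nmid e$, in accordance with Lemma \ref{tame distinct}).
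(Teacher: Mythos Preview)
Your proof is correct and takes a genuinely different route from the paper's. The paper writes $e=sp^{r}$ with $(s,p)=1$, sets $L_{1}=K(p^{1/e})$ and $L_{2}=K(p^{1/s},\alpha)$ where $\alpha$ is a uniformizer of the $r$-th layer $M_{r}$ of the cyclotomic $\BZ_{p}$-extension of $\BQ$, and then argues by Galois theory and degree counting that an isomorphism would force $K(p^{1/p^{r}})=K(\alpha)$, which is declared a contradiction. Your approach replaces this Galois/cyclotomic machinery with a direct computation of the different via Lemma~\ref{different bound}: by choosing $g\in\BZ[x]$ Eisenstein over $W(k)$, any abstract isomorphism transports $\beta$ to a root $\gamma\in L_{1}$ of the \emph{same} polynomial $g$, which then generates $R_{1}$ over $W(k)$, and the two presentations $R_{1}=W(k)[\pi_{1}]=W(k)[\gamma]$ give incompatible valuations $ae+e-1$ versus $e$ for the different. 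This is more elementary and entirely self-contained within the paper's toolkit; it also makes the obstruction (a numerical invariant) completely explicit, whereas the paper's final contradiction $K(p^{1/p^{r}})=K(\alpha)$ is left unjustified and in fact requires an argument of exactly your type (or a Galois-versus-non-Galois distinction) to complete. The paper's construction has the mild advantage that $L_{2}$ is visibly built from arithmetically natural pieces, but your polynomial $x^{e}+px+p$ would serve equally well in the downstream application to Proposition~\ref{lower_bound_lambda(T)_wild}.
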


\begin{proof}

We write $e=sp^r$ for some positive integers $s$ and $r$ where $s$ is prime to $p$. Let $\BQ_{\infty}/\BQ$ be the cyclotomic $\BZ_p$-extension, in particular $\mathrm{Gal}(\BQ_{\infty}/\BQ)\cong \BZ_p$.
Let $M_r$ be a unique subfield of $\BQ_{\infty}$ such that $[M_r:\BQ]=p^r$.
 By the theory of cyclotomic fields (c.f. \cite[Chapter 1]{N}), the Galois extension $M_r/\BQ$ is  totally  ramified  at the place above $p$. Let $\alpha$ be a uniformizer of $M_r$ corresponding to the place above $p$. Since $M_r/\BQ$ is a Galois extension, $M_r=\BQ(\alpha)=\BQ(\sigma(\alpha))$ for any embedding $\sigma$. We fix an embedding $\BQ^{alg} \subset K^{alg}$.

Let $L_1=K(p^{1/e})=K(p^{1/s},p^{1/p^r} )$ and $L_2=K(p^{1/s},\alpha )$. Then $L_1$ and $L_2$ are totally  ramified extensions of degree $e$ over $K$.
If there is an isomorphism $\sigma:L_2 \longrightarrow L_1$, $L_1$ contains both $\sigma(\alpha)$ and $p^{1/p^r}$. Since $\BQ(\alpha)=\BQ(\sigma(\alpha))$, $K(\sigma(\alpha))=K(\alpha)$ is contained in $L_1$. We note that $[K(p^{1/p^r},\alpha):K(p^{1/p^r})]$ divides $[K(\alpha):K]=p^r$ because $K(\alpha)/K$ is a Galois extension.
 Since
$$
s=\left[L_1:K\left(p^{1/p^r}\right)\right] =\left[L_1:K\left(p^{1/p^r}, \alpha\right)\right]\left[K\left(p^{1/p^r},\alpha\right): K(p^{1/p^r})\right],
$$
 $[K(p^{1/p^r},\alpha):K(p^{1/p^r})]$ divides $s$. Hence we obtain $[K(p^{1/p^r},\alpha):K(p^{1/p^r})]=\mathrm{gcd}(s, p^r)=1$. This shows  $K(p^{1/p^r})=K(\alpha)$ because $[K(p^{1/p^r}):K]=[K(\alpha):K ]$. This is a contradiction, and hence, $L_1$ and $L_2$ are not isomorphic.
\end{proof}

\begin{proposition}\label{wild_lower_bound}
Let $p$ be a prime number and let $e$ be a positive integer divisible by $p$.
 Then the lifting number for $\CC_{p,e}$ is bigger than $e$.
\end{proposition}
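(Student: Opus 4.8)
The plan is to follow the scheme of the proof of Corollary~\ref{tame optimal cor}, feeding in Lemma~\ref{wild distinct} where that proof used Lemma~\ref{tame distinct}. Suppose for contradiction that $\CC_{p,e}$ is $n$-liftable for some $n\le e$, and fix an $n$-th lifting functor $\L\colon\CR_{p,e}^{n}\to\CC_{p,e}$; since $n\le e$, the only requirement on objects of $\CR_{p,e}^{n}$ is that they be principal Artinian local rings of length $n$ with perfect residue field of characteristic $p$, so the rings produced below will automatically lie in $\ob(\CR_{p,e}^{n})$. Take $k=\BF_{p}$, so $W(k)=\BZ_{p}$ and $K=\BQ_{p}$. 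By Lemma~\ref{wild distinct}, since $p\mid e$, there exist totally ramified extensions $L_{1}$ and $L_{2}$ of degree $e$ over $K$ that are not isomorphic over $\BQ$; let $R_{1},R_{2}$ be their valuation rings. Each $R_{i}$ is a complete discrete valuation ring of mixed characteristic $(0,p)$ with residue field $\BF_{p}$ (by total ramification) and absolute ramification index $e$, hence $R_{i}\in\ob(\CC_{p,e})$.

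Next, apply Proposition~\ref{tame optimal pro} to the totally ramified degree-$e$ extensions $R_{1}/W(k)$ and $R_{2}/W(k)$: it yields a $W(k)$-algebra isomorphism $R_{1,e}\cong R_{2,e}$, which is in particular a ring isomorphism. An isomorphism of local rings carries the $n$-th power of the maximal ideal onto the $n$-th power of the maximal ideal, so for every $n\le e$ it descends to an isomorphism $R_{1,n}\cong R_{2,n}$, that is, an isomorphism in $\CR_{p,e}^{n}$. Now $R_{i,n}=\Pr_{n}(R_{i})$ is the $n$-th residue ring of $R_{i}$, so by Remark~\ref{basic_on_n-liftable}.(2) (uniqueness up to isomorphism of an object of $\CC_{p,e}$ with a prescribed $n$-th residue ring) together with the fact that functors preserve isomorphisms, we obtain $R_{1}\cong\L(R_{1,n})\cong\L(R_{2,n})\cong R_{2}$ as rings.

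Finally, a ring isomorphism $R_{1}\to R_{2}$ extends uniquely to an isomorphism of fraction fields $L_{1}\to L_{2}$; being a unital ring homomorphism it fixes $\BZ$, hence fixes $\BQ$, so $L_{1}$ and $L_{2}$ are isomorphic over $\BQ$ --- contradicting Lemma~\ref{wild distinct}. Therefore $\CC_{p,e}$ is not $n$-liftable for any $n\le e$, and the lifting number for $\CC_{p,e}$ is greater than $e$ (it is moreover finite, by Theorem~\ref{main theorem lifting functor}).

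I do not expect a genuine obstacle at this stage: granted Lemma~\ref{wild distinct} and Proposition~\ref{tame optimal pro}, the deduction is purely formal, exactly as in Corollary~\ref{tame optimal cor}. The only delicate point worth flagging is that the hypothesis $p\mid e$ is used precisely once, inside Lemma~\ref{wild distinct}, to manufacture a totally ramified degree-$e$ extension of $K$ --- built from the cyclotomic $\BZ_{p}$-extension of $\BQ$ --- that is not of the form $K(\sqrt[e]{c})$ for any $c\in K$; that is where the real content of the lower bound resides, and without wild ramification such an extension need not exist.
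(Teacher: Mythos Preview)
Your proof is correct and follows essentially the same approach as the paper's: invoke Lemma~\ref{wild distinct} to obtain non-isomorphic totally ramified degree-$e$ extensions of $\BQ_p$, use Proposition~\ref{tame optimal pro} to see that their $e$-th residue rings are isomorphic, and derive a contradiction from the existence of a lifting functor. Your write-up is in fact slightly more careful than the paper's in explicitly descending the isomorphism $R_{1,e}\cong R_{2,e}$ to $R_{1,n}\cong R_{2,n}$ for arbitrary $n\le e$ before applying Remark~\ref{basic_on_n-liftable}.(2).
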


\begin{proof}
By Lemma \ref{wild distinct}, there are two  totally ramified extensions $L_1$ and $L_2$ of degree $e$ over $\BQ_p$ such that there is no isomorphism over $\BQ_p$ between $L_1$ and $L_2$.
 If $\CC_{p,e}$ is $e$-liftable, $L_1$ and $L_2$ are isomorphic over $\BQ_p$ by Proposition \ref{tame optimal pro} and it is a contradiction. Hence, the lifting number for $\CC_{p,e}$ is bigger than $e$.
\end{proof}

\begin{corollary}\label{lifting number >e}
The lifting number for $\CC_{p,e}$ is bigger than $e$ whenever $e>1$.
\end{corollary}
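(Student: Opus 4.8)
The plan is to combine the two cases already handled separately in the text, namely the tamely ramified case (Corollary \ref{tame optimal cor}) and the wildly ramified case (Proposition \ref{wild_lower_bound}), to obtain a uniform lower bound for all $e>1$. First I would fix $e>1$ and split according to whether $p\mid e$ or $p\nmid e$. If $p\mid e$, then Proposition \ref{wild_lower_bound} directly gives that the lifting number for $\CC_{p,e}$ is bigger than $e$, and there is nothing more to prove. If $p\nmid e$, then since $e>1$ we are in the hypothesis range of Corollary \ref{tame optimal cor}, which asserts that $e+1$ is the lifting number for $\CC_{p,e}$; in particular the lifting number equals $e+1 > e$.

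In both cases the lifting number strictly exceeds $e$, which is exactly the claimed statement. Thus the corollary follows immediately by case distinction from the two preceding results, with no additional argument required. The only thing to check is that the two cases together exhaust all $e>1$, which is clear since every integer $e$ either is divisible by $p$ or is prime to $p$.

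There is no real obstacle here: the corollary is a packaging of Corollary \ref{tame optimal cor} and Proposition \ref{wild_lower_bound} into a single statement valid for all $e>1$. I would simply write:

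\begin{proof}
If $p\mid e$, then by Proposition \ref{wild_lower_bound} the lifting number for $\CC_{p,e}$ is bigger than $e$. If $p\nmid e$, then since $e>1$, Corollary \ref{tame optimal cor} shows that the lifting number for $\CC_{p,e}$ is $e+1>e$. In either case the lifting number for $\CC_{p,e}$ is bigger than $e$.
\end{proof}
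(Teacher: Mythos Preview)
Your proposal is correct and matches the paper's approach exactly: the paper states Corollary \ref{lifting number >e} without proof, immediately after Proposition \ref{wild_lower_bound}, precisely because it is the evident combination of Corollary \ref{tame optimal cor} (the case $p\nmid e$) and Proposition \ref{wild_lower_bound} (the case $p\mid e$). Your case split and justification are exactly what is intended.
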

\noindent
Although we have  the lower bound $e+1$ and  the upper bound $e+e\nu(e)+1$ of the lifting number for $\CC_{p,e}$, we  have no clue to calculate the lifting number explicitly for the wild case.

\begin{question}
What is the lifting number for the wild case\textup{?}
\end{question}

\section{Ax-Kochen-Ershov principle for finitely ramified valued fields}\label{AKE_Basarab}

Our main goal in this section is to strengthen Basarab's result on relative completeness for finitely ramified henselian valued fields of mixed characteristic with perfect residue fields.
{\bf In this section, we drop the restriction that a valuation group is $\BZ$ so that a valuation group can be an arbitrary ordered abelian group.}
Recall that for a valued field $(K,R,\nu, \Gamma)$, $e_{\nu}(x)$ is the number of the positive elements of $\Gamma$ less than or equal to $\nu(x)$ for $x\in R$.

\begin{remark}\label{rmk:elementaryequivalence_same_ramification}
Let $(K_1,\nu_1)$ and $(K_2,\nu_2)$ be finitely ramified valued fields of mixed characteristic $(0,p)$. Suppose $R_{1,n}\equiv R_{2,n}$ for some $n> \min\{e_{\nu_1}(p),e_{\nu_2}(p)\}$, where $R_{1,(n)}$ and $R_{2,(n)}$ are the $n$-th residue rings of $K_1$ and $K_2$ respectively. Then, $e_{\nu_1}(p)=e_{\nu_2}(p)$.
\end{remark}
\begin{proof}
Without loss of generality, we may assume that $e_1:=e_{\nu_1}(p)\le e_2:=e_{\nu_2}(p)$. By the Keisler-Shelah isomorphism theorem, we may assume that $R_{1,(n)}$ and $R_{2,(n)}$ are isomorphic. Since $n> e_1$, we have that $pR_{1,(n)}=\bar \fm_1^{e_1}\neq 0$ where $\bar \fm_1$ is the maximal ideals of $R_{1,(n)}$. Since $R_{1,(n)}$ and $R_{2,(n)}$ are isomorphic, $0\neq pR_{2,(n)}=\bar \fm_2^{e_2}$, where $\bar \fm_2$ is the maximal ideal of $R_{2,(n)}$, and $e_1=e_2$.
\end{proof}

\begin{theorem}\label{fined_AKE_Rn}
Let $(K_1,\nu_1,\Gamma_1)$ and $(K_2,\nu_2,\Gamma_2)$ be finitely ramified henselian valued fields of mixed characteristic $(0,p)$ with perfect residue fields. Let $n_0>e_{\nu_2}(p)(1+e_{\nu_1}(e_{\nu_1}(p)))$. Then, the following are equivalent:
		\begin{enumerate}
			\item $K_1\equiv K_2$;
			\item $\Gamma_1\equiv \Gamma_2$ and $R_{1,{(n)}}\equiv R_{2,{(n)}}$ for each $n\ge 1$; and
			\item $\Gamma_1\equiv \Gamma_2$ and $R_{1,(n_0)}\equiv R_{2,(n_0)}$.
		\end{enumerate}
\end{theorem}
\begin{proof}
It is easy to check $(1)\Rightarrow(2)\Rightarrow(3)$. We show $(3)\Rightarrow (1)$.
Suppose $R_{1,(n_0)}\equiv R_{2,(n_0)}$ and $\Gamma_1\equiv \Gamma_2$.
By Remark \ref{iso_Rn_Gamma}, we may assume that $R_{1,(n_0)}\cong R_{2,(n_0)}$ and $\Gamma_1\cong \Gamma_2$, and that $(K_1,\nu_1,\Gamma_1)$ and $(K_2,\nu_2,\Gamma_2)$ are $\aleph_1$-saturated.
Consider the coarse valuations $\dnu_1$ and $\dnu_2$ of $\nu_1$ and $\nu_2$ respectively and the valued fields $(K_1,\dnu_1,\Gamma_1/\Gamma^{\circ}_1)$ and $(K_2,\dnu_2,\Gamma_2/\Gamma^{\circ}_2)$, where $\Gamma^{\circ}_i$ is the convex subgroup of $\Gamma_i$ generated by the minimum positive element in $\Gamma_i$ for $i=1,2$.
Since $(K_1,\nu_1)$ and $(K_2,\nu_2)$ are $\aleph_1$-saturated, by Remark \ref{coarse_valuation}.(4), the core fields $(\cfd_1,\cnu_1)$ and $(\cfd_2,\cnu_2)$ are complete discrete valued fields, where $\cnu_1$ and $\cnu_2$ are the valuations induced from $\nu_1$ and $\nu_2$ respectively.
Since the $n_0$-th residue rings of $(K_1,\nu_1)$ and $(K_2,\nu_2)$ are isomorphic, by Remark \ref{coarse_valuation}.(2), the $n_0$-th residue rings of $(\cfd_1,\cnu_1)$ and $(\cfd_2,\cnu_2)$ are isomorphic.

By Theorem \ref{main_theorem_construction}, $\cfd_1$ and $\cfd_2$ are isomorphic. Since $\Gamma_1\cong \Gamma_2$, $\Gamma_1/\Gamma^{\circ}_1\cong \Gamma_2/\Gamma^{\circ}_2$. Furthermore, $(K_1,\dnu_1,(K_1^{\circ},\nu_1^{\circ}))\equiv (K_2,\dnu_2,(K_2^{\circ},\nu_2^{\circ}))$ because Fact \ref{AKE} holds after adding structure on residue fields. To get that $(K_1,\nu_1)\equiv(K_2,\nu_2)$, it is enough to show that $(K_1,R_{\nu_1})\equiv (K_2,R_{\nu_2})$ in the ring language with a unary predicate. By Remark \ref{coarse_valuation}.(1), the valuation rings $R_{\nu_1}$ and $R_{\nu_2}$ are definable by the same formula in $(K_1,\dnu_1,(K_1^{\circ},\nu_1^{\circ}))$ and $(K_2,\dnu_2,(K_2^{\circ},\nu_2^{\circ}))$ so that $(K_1,R_{\nu_1})\equiv (K_2,R_{\nu_2})$.
\end{proof}

We give several corollaries of Theorem \ref{fined_AKE_Rn}. First, we improve the result in \cite{B2} on a decidability of finitely ramified henselian valued fields in the case of perfect residue field.
\begin{corollary}\label{decidability_ramified_valuedfield}
Let $(K,\nu,\Gamma)$ be a finitely ramified henselian valued field of mixed characteristic with a perfect residue field. Let $n_0>e_{\nu}(p)(1+e_{\nu}(e_{\nu}(p))$.
Let $\Th(K,\nu)$ be the theory of $(K,\nu)$, $\Th(\Gamma)$  the theory of $\Gamma$, and $\Th(R_{(n)})$ the theory of $R_{(n)}$.
The following are equivalent:
\begin{enumerate}
	\item $\Th(K,\nu)$ is decidable.
	\item $\Th(\Gamma)$ is decidable, and $\Th(R_{(n)})$ is decidable for each $n\ge 1$.
	\item $\Th(\Gamma)$ is decidable, and $\Th(R_{(n_0)})$ is decidable.
\end{enumerate}
Note that the lower bound of $n_0$ depends only on $e$ and $p$.
\end{corollary}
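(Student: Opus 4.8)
The plan is to derive this purely formally from Theorem \ref{fined_AKE_Rn}, using two standard facts from recursion theory applied to model theory: (a) a complete first-order theory is decidable if and only if it is recursively (effectively) axiomatizable, and (b) decidability transfers along uniform interpretations. The implication $(2)\Rightarrow(3)$ is immediate. For $(1)\Rightarrow(2)$ I would use that the value group $\Gamma$ and each $n$-th residue ring $R_n$ are interpretable in $(K,\nu)$ in the language $\CL_{val,R}$, with the interpreting formula for $R_n$ computable from $n$; hence a decision procedure for $\Th(K,\nu)$ yields one for $\Th(\Gamma)$ and, for every fixed $n$, one for $\Th(R_n)$.

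The substance is in $(3)\Rightarrow(1)$. I would fix $p$ and set $e:=e_{\nu}(p)$, so that $e$ and the number $n_0$ are finite data attached to $K$. Let $T_0$ be the $\CL_{val,R}$-theory asserting that the valued field is henselian of mixed characteristic $(0,p)$, has perfect residue field and absolute ramification index $e$, together with the recursive axioms identifying each $R_n$ with $R/\fm^n$. This $T_0$ is recursively axiomatizable: Hensel's lemma is a recursive schema, "characteristic $(0,p)$", "ramification index $e$" (with $p$, $e$ explicit), and "perfect residue field" are all recursively axiomatizable. By Theorem \ref{fined_AKE_Rn}, any model of $T_0\cup\Th(\Gamma)\cup\Th(R_{n_0})$ has value group elementarily equivalent to $\Gamma$ and $n_0$-th residue ring elementarily equivalent to $R_{n_0}$, hence is elementarily equivalent to $(K,\nu)$; so $T_0\cup\Th(\Gamma)\cup\Th(R_{n_0})$ is complete and axiomatizes $\Th(K,\nu)$. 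Assuming (3), the complete theories $\Th(\Gamma)$ and $\Th(R_{n_0})$ are decidable, hence recursively axiomatizable; adjoining these recursive axiom sets to the recursively axiomatizable $T_0$ exhibits $\Th(K,\nu)$ as a recursively axiomatized complete theory, which is therefore decidable.

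The only genuine point to check — and the main place care is needed — is that the one-sided bound $n_0 > e_{\nu}(p)\bigl(1+e_{\nu}(e_{\nu}(p))\bigr)$ assumed for the given $K$ suffices to invoke Theorem \ref{fined_AKE_Rn}, whose hypothesis is stated symmetrically in two fields. This is fine, because the ramification index $e$ is already read off from $R_{n_0}$ (the relation $p\in\overline{\fm}^{e}\setminus\overline{\fm}^{e+1}$ is visible once $n_0>e$, which holds here), so any model of $T_0\cup\Th(R_{n_0})$ has ramification index exactly $e$, and the symmetric bound then collapses to the one we have assumed. Everything else is the routine interplay between completeness, recursive axiomatizability, and interpretations, and no new valuation-theoretic input beyond Theorem \ref{fined_AKE_Rn} is required.
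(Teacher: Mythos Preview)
Your proposal is correct and follows essentially the same approach as the paper: for the substantive direction $(3)\Rightarrow(1)$, both you and the paper exhibit $\Th(K,\nu)$ as the completion (via Theorem~\ref{fined_AKE_Rn}) of a recursively axiomatizable base theory $T_{p,e}$ together with $\Th(\Gamma)\cup\Th(R_{n_0})$, and conclude decidability from completeness plus recursive axiomatizability. The only differences are cosmetic: the paper handles $(1)\Leftrightarrow(2)$ by citing Basarab~\cite{B2} rather than arguing via interpretability as you do, and it leaves implicit both the recursion-theoretic background and your observation that the symmetric bound in Theorem~\ref{fined_AKE_Rn} collapses to the one-sided bound once the ramification index is fixed.
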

\begin{proof}
(1) $\Leftrightarrow$ (2) This was already given by Basarab in \cite{B2}.

(1) $\Leftrightarrow$ (3) Let $e(:=e_{\nu}(p))$ be the ramification index of $(K,\nu)$. Consider the following theory $T_{p,e}$ consisting of the following statements, which can be expressed by the first order logic;
\begin{itemize}
	\item $(K,\nu)$ is a henselian valued field of characteristic zero;
	\item $\Gamma$ is an abelian ordered group having the minimum positive element;
	\item $k$ is a perfect field of characteristic $p>0$;
	\item $(K,\nu)$ has the ramification index $e$.
\end{itemize}
By Theorem \ref{fined_AKE_Rn}, the theory $T_{p,e}\cup \Th(\Gamma)\cup \Th(R_{(n_0)})$ is complete. Thus $\Th(K,\nu)$ is decidable if and only if $\Th(\Gamma)$ and $\Th(R_{(n_0)})$ are decidable.
\end{proof}

Next we recall the following definition introduced in \cite{B1}:
\begin{definition}\label{elementary_bound}\cite{B1}
Let $T$ be the theory of a finitely ramified henselian valued field $(K,\nu,\Gamma)$ of mixed characteristic. Let $\lambda(T)\in\BN\cup\{\infty\}$ be defined as the smallest positive integer $n$ (if such a number exists) such that for every finitely ramified henselian valued field $(K',\nu',\Gamma')$ of mixed characteristic having the same ramification index of $(K,\nu,\Gamma)$, the following are equivalent:
\begin{enumerate}
	\item $(K',\nu',\Gamma')\models T$.
	\item $\Gamma\equiv \Gamma'$ and the $n$-th residue rings of $(K,\nu)$ and $(K',\nu')$ are elementarily equivalent.
\end{enumerate}
Otherwise, $\lambda(T)=\infty$.
\end{definition}

\noindent
Basarab in \cite{B1} showed that $\lambda(T)$ is finite if $T$ is the theory of a local field of mixed characteristic. In general, for the perfect residue field case, we prove that Basarab's invariant $\lambda(T)$ is always finite and  smaller than or equal to the lifting number.
\begin{corollary}\label{basarab_question_perfectfields}
Let $(K,\nu)$ be a finitely ramified henselian valued field of mixed characteristic $(0,p)$ having finite ramification index $e=e_{\nu}(p)$ with a perfect residue field. Let $T$ be the theory of $(K,\nu)$. Then
\begin{enumerate}
\item
 $\lambda(T)$ is smaller than or equal to the lifting number for $\CC_{p,e}$.
 \item
 $\lambda(T)\le e_{\nu}(p)(1+e_{\nu}(e_{\nu}(p))+1$.
 \end{enumerate}
\end{corollary}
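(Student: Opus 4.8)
The plan is to derive part (2) from part (1) together with Theorem \ref{main theorem lifting functor}: that theorem bounds the lifting number for $\CC_{p,e}$ by $e+e\nu(e)+1$, which equals $e_{\nu}(p)\big(1+e_{\nu}(e_{\nu}(p))\big)+1$, so once part (1) is in hand part (2) is immediate. Thus all the content is in part (1). Write $n$ for the lifting number for $\CC_{p,e}$ (finite by Theorem \ref{main theorem lifting functor}). Unwinding Definition \ref{elementary_bound}, I must show that for every henselian valued field $(K',\nu',\Gamma')$ of mixed characteristic with absolute ramification index $e$, one has $(K',\nu')\models T$ if and only if $\Gamma\equiv\Gamma'$ and the $n$-th residue rings $R_n$ of $(K,\nu)$ and $R'_n$ of $(K',\nu')$ satisfy $R_n\equiv R'_n$. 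The ``only if'' direction is immediate, since $(K',\nu')\models T$ means $(K,\nu)\equiv(K',\nu')$ and both the residue rings and the value group are interpretable in $\CL_{val}$. For the ``if'' direction I would run the argument of Theorem \ref{fined_AKE_Rn}, but with the lifting functor at level $n$ replacing the explicit numerical bound used there.

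So assume $\Gamma\equiv\Gamma'$ and $R_n\equiv R'_n$; note this already forces the residue field of $K'$ to be perfect of characteristic $p$, as that is expressible in the theory of $R_n$. First I would pass to $\aleph_1$-saturated elementary extensions $(K_1^*,\nu_1^*,\Gamma_1^*)$ of $(K,\nu,\Gamma)$ and $(K_2^*,\nu_2^*,\Gamma_2^*)$ of $(K',\nu',\Gamma')$, built by a short iterated ultrapower in the style of the proof of Proposition \ref{iso_Rn_Gamma}: one Keisler--Shelah ultrapower (Theorem \ref{KS_iso_theorem}) to make the value groups isomorphic, a second to make the $n$-th residue rings isomorphic (without disturbing the already established isomorphism of value groups), and a nonprincipal ultrapower over $\omega$ to gain $\aleph_1$-saturation; only the single level $n$ is needed, so the full strength of Proposition \ref{iso_Rn_Gamma} is not required. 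The outcome is $\Gamma_1^*\cong\Gamma_2^*$ and $R_{1,n}^*\cong R_{2,n}^*$, with each $K_i^*$ still henselian of mixed characteristic and absolute ramification index $e$. Coarsening as in Remark/Definition \ref{def_coarse_valuation}, the core fields of $(K_1^*,\nu_1^*)$ and $(K_2^*,\nu_2^*)$ with their induced valuations are complete discretely valued fields by Lemma \ref{coarse_valuation}.(3)--(4); by Lemma \ref{coarse_valuation}.(2) their $n$-th residue rings are isomorphic to $R_{1,n}^*\cong R_{2,n}^*$ and (taking $n=1$) their residue fields are perfect of characteristic $p$, and by the definition of the core valuation their absolute ramification index is $e$. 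Hence the valuation rings $R_1^{\circ}$ and $R_2^{\circ}$ of the two core fields are objects of $\CC_{p,e}$ whose $n$-th residue rings are isomorphic objects of $\CR_{p,e}^{n}$.

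Now I would invoke the hypothesis that $\CC_{p,e}$ is $n$-liftable: fixing an $n$-th lifting functor $\L:\CR_{p,e}^{n}\to\CC_{p,e}$, Remark \ref{basic_on_n-liftable}.(2) identifies each $R_i^{\circ}$ up to isomorphism with $\L$ applied to its own $n$-th residue ring, and since a functor carries isomorphisms to isomorphisms, the isomorphism $R_{1,n}^*\cong R_{2,n}^*$ yields an isomorphism $R_1^{\circ}\cong R_2^{\circ}$ of valued fields. Feeding this into the Ax-Kochen-Ershov principle: the coarse valuations of $\nu_1^*$ and $\nu_2^*$ have residue characteristic $0$, their residue fields are the two core fields, now isomorphic hence elementarily equivalent, and their value groups $\Gamma_1^*/\Gamma_1^{*\circ}$ and $\Gamma_2^*/\Gamma_2^{*\circ}$ are isomorphic (the order-isomorphism $\Gamma_1^*\cong\Gamma_2^*$ carries the convex subgroup generated by the least positive element onto its counterpart), so Theorem \ref{AKE} gives that $(K_1^*,\nu_1^*)$ and $(K_2^*,\nu_2^*)$ become elementarily equivalent after replacing each $\nu_i^*$ by its coarsening. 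Finally, by Lemma \ref{def_resiue_ring} the valuation ring of the core valuation is definable in each core field by a single formula $\phi_q$ with $q=p^l+1$ for $l$ large enough that $q>e$, which works uniformly since both core fields have ramification index $e$; so by Lemma \ref{coarse_valuation}.(1) the valuation rings $R_{\nu_1^*}$ and $R_{\nu_2^*}$ are definable by one common formula in the coarsened structures. Therefore $(K_1^*,\nu_1^*)\equiv(K_2^*,\nu_2^*)$, whence $(K,\nu)\equiv(K',\nu')$, i.e., $(K',\nu')\models T$; this proves part (1), and part (2) then follows by quoting Theorem \ref{main theorem lifting functor}.

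I expect the main obstacle to be not conceptual but a matter of two careful verifications. The first is producing an honest isomorphism $R_{1,n}^*\cong R_{2,n}^*$ from the mere hypothesis $R_n\equiv R'_n$, whereas Proposition \ref{iso_Rn_Gamma} as stated presumes elementary equivalence of all residue rings simultaneously; since here only the single level $n$ is needed, the two-step ultrapower above suffices, but this has to be checked. The second, and more substantial, point is confirming that the core fields genuinely lie in $\ob(\CC_{p,e})$ --- complete, discretely valued, perfect residue field of characteristic $p$, and absolute ramification index exactly $e$ --- which is precisely where the passage to $\aleph_1$-saturated extensions and Lemma \ref{coarse_valuation} do the real work; once this is secured, the functoriality of $\L$ carries the argument through.
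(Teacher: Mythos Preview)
Your proposal is correct and follows essentially the same route as the paper, which simply says ``the proof follows from the proof of Theorem \ref{fined_AKE_Rn}.'' You have unpacked that one-line reference: re-run the argument of Theorem \ref{fined_AKE_Rn} (ultrapowers to force isomorphisms, coarsening to core fields, AKE for the coarsened fields, uniform definability of the valuation ring via Lemma \ref{def_resiue_ring}), replacing the explicit numerical bound by the lifting number and invoking the lifting functor through Remark \ref{basic_on_n-liftable}.(2); part (2) then follows from part (1) and Theorem \ref{main theorem lifting functor}, exactly as you say.
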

\noindent Next, we compute explicitly $\lambda(T)$ for the theories $T$ of some tamely ramified valued fields. We say that an abelian group $G$ is $e$-divisible when the multiplication by $e$ map, $e:G\longrightarrow G$ is surjective. We denote the unit group of a ring $R$ by $R^{\times}$.

\begin{lemma}\label{e-divisible}
Let $(K, W(k),\fm,  k)$ be an unramified  complete discrete valued field of mixed characteristic $(0,p)$ with a perfect residue field. Suppose that $k^{\times}$ is $e$-divisible for a positive integer $e$ prime to $p$.
 \begin{enumerate}
 \item
If $\zeta_e$ is contained in $W(k)$, then there exists a unique totally ramified extension $L$ of degree $e$ over $K$.
\item
If $\zeta_e$ is not contained in $W(k)$, then there exists a unique totally ramified extension $L$ of degree $e$ over $K$ up to $K$-isomorphism.
\end{enumerate}
\end{lemma}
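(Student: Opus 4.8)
The plan is to reduce every totally tamely ramified extension of $K$ of degree $e$ to the single extension $K(p^{1/e})$ — note that $p$ is a uniformizer of $W(k)$ since $K$ is absolutely unramified — and then read off the uniqueness statement from whether $\zeta_e$ lies in $W(k)$ or not. The whole argument rests on one ring-theoretic fact, proved first, that absorbs all unit factors into $e$-th powers.

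\emph{Step 1: under the hypotheses the $e$-th power map $W(k)^\times\to W(k)^\times$ is surjective.} This is the only place the $e$-divisibility assumption is used, and it is where I expect the real content to sit; everything afterward is classical local field theory. Given $u\in W(k)^\times$, I would use $e$-divisibility of $k^\times$ to write its residue as $\bar u=\bar c^{\,e}$, lift $\bar c$ to its Teichm\"uller representative $c\in W(k)^\times$ via Lemma \ref{Teichmuller}, so that $u=c^e v$ with $v\in 1+pW(k)$. Since $\gcd(e,p)=1$, the integer $e$ is a unit of $W(k)$, so Hensel's lemma over the $p$-adically complete ring $W(k)$ (applied to $x^e-v$ with approximate root $1$) yields $v=d^e$ with $d\in 1+pW(k)$, whence $u=(cd)^e$. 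Thus every unit of $W(k)$, and in particular every unit of $\CO_K$, is an $e$-th power in $K$.

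\emph{Step 2: normal form for extensions.} Let $L/K$ be totally tamely ramified of degree $e$ and let $\pi_L$ be a uniformizer of $L$. By the standard theory of totally ramified extensions, any uniformizer generates, so $L=K(\pi_L)$, and $\pi_L^e/p\in\CO_L^\times$. Since $L/K$ is totally ramified its residue field is $k$, so choosing $u\in W(k)^\times$ with $\pi_L^e/p\equiv u\pmod{\fm_L}$ gives $\pi_L^e/(up)\in 1+\fm_L$, which is an $e$-th power $w^e$ with $w\in 1+\fm_L$ by Hensel's lemma in $L$ (again using $\gcd(e,p)=1$). Writing $u=c^e$ as in Step 1, the element $\rho:=\pi_L/(cw)$ is a uniformizer of $L$, hence generates $L$ over $K$, and satisfies $\rho^e=p$.

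\emph{Step 3: conclusion.} The polynomial $x^e-p$ is Eisenstein at $p$, hence irreducible over $K$ of degree $e$; fixing an algebraic closure and one root $\rho_0$, its roots are $\zeta_e^{\,j}\rho_0$ for $0\le j<e$. By Step 2 every totally tamely ramified extension of $K$ of degree $e$ is one of the subfields $K(\zeta_e^{\,j}\rho_0)$. These are the roots of a single irreducible polynomial over $K$, hence pairwise conjugate over $K$ and therefore pairwise $K$-isomorphic, which proves (2). If moreover $\zeta_e\in W(k)\subset K$, then each $\zeta_e^{\,j}\in K$, so all these subfields coincide with $K(\rho_0)=K(p^{1/e})$, proving (1). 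Existence in both cases is immediate: $K(p^{1/e})$ is a degree-$e$ Eisenstein, hence totally ramified, extension, and it is tamely ramified because $e$ is prime to $p$. The main obstacle is Step 1; I would only sketch Steps 2 and 3, citing the standard ramification theory (e.g. Chapter 2 of \cite{L}).
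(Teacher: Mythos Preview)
Your proof is correct and follows essentially the same strategy as the paper's: both first establish that $W(k)^\times$ is $e$-divisible and then use this to reduce every totally tamely ramified degree-$e$ extension to $K(p^{1/e})$. The only differences are cosmetic---the paper proves $e$-divisibility of $1+\fm$ via the filtration $U^{(n)}/U^{(n+1)}\cong k$ and an inverse-limit argument where you invoke Hensel's lemma directly, and the paper handles case~(1) separately via Kummer theory (computing $K^\times/(K^\times)^e\cong\BZ/e\BZ$) while your Step~3 treats both cases uniformly by listing the roots of $x^e-p$.
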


\begin{proof}
Let $S$ be the set of  Teichm\"{u}ller representatives of $W(k)$. By Hensel's lemma, $1+\fm$ is $e$-divisible, and so is $W(k)^{\times}=S\setminus \{0\} \times (1+\fm)$ because $k^{\times}\cong S\setminus \{0\}$ is $e$-divisible.

For a totally tamely ramified extension $L$ of degree $e$ over $K$, there is $u$ in $W(k)^{\times}$ such that $L=K(\sqrt[e]{pu})$ by the theory of tamely ramified extensions (c.f. \cite[Chapter 2]{L}). Since $W(k)^{\times}$ is $e$-divisible, there is $v$ in $W(k)^{\times}$ such that $v^e=u$. Hence, $\sqrt[e]{pu}= \sqrt[e]{p}v \zeta_e^i$ for some $i$. This shows that $L=K(\sqrt[e]{pu})=K(\sqrt[e]{p} \zeta_e^i)$ is isomorphic to $K(\sqrt[e]{p})$ over $K$ because the irreducible polynomial of $\sqrt[e]{p}$ over $K$ is $x^e-p$.
Furthermore, $L=K(\sqrt[e]{p})$ when $\zeta_e $ is contained in $W(k)$.
%
\end{proof}

\noindent

\begin{proposition}\label{lambda(T)_tame}
Let $(K,\nu,\Gamma,k)$ be a finitely tamely ramified henselian valued field of mixed characteristic $(0,p)$ with a perfect residue field. Let $e\ge 2$ be the ramification index of $(K,\nu)$. Let $T$ be the theory of $(K,\nu)$.
\begin{enumerate}
	\item If $k^{\times}$ is $e$-divisible, then $\lambda(T)=1$.
	\item If there is a prime divisor $l$ of $e$ such that $\zeta_{l^n}\in k^{\times}$ and $\zeta_{l^{n+1}}\notin k^{\times}$ for some $n$, then $\lambda(T)=e+1$.
\end{enumerate}
\end{proposition}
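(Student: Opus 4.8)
My plan is to handle the two parts separately, in each case reducing to the core field via the coarsening device from the proof of Theorem~\ref{fined_AKE_Rn} and then invoking the tame (non-)uniqueness results for totally tamely ramified extensions of a Witt ring (Lemmas~\ref{e-divisible} and~\ref{tame distinct}, Proposition~\ref{tame optimal pro}).

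\emph{Part (1).} Since $\lambda(T)\ge 1$ by definition, I only need to check that the equivalence in Definition~\ref{elementary_bound} holds with $n=1$: that any henselian valued field $(K',\nu',\Gamma')$ of mixed characteristic with absolute ramification index $e$ and $\Gamma\equiv\Gamma'$, $k\equiv k'$ satisfies $K'\equiv K$ (the converse being trivial). First I would note that ``$k^\times$ is $e$-divisible'' is, for the fixed $e$ with $p\nmid e$, the first-order sentence $\forall x\,\exists y\,(y^e=x)$, so $(k')^\times$ is $e$-divisible too. Next I would run the argument in the proof of Proposition~\ref{iso_Rn_Gamma} only at the residue-field and value-group levels to obtain $\aleph_1$-saturated elementary extensions $K^*\succ K$ and $(K')^*\succ K'$ with isomorphic residue fields $k^*\cong (k')^*$ and isomorphic value groups. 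By Lemma~\ref{coarse_valuation}.(3)--(4) their core fields $K^\circ$ and $(K')^\circ$ are complete discrete valued fields of mixed characteristic with residue field $k^*$ and absolute ramification index $e$, so by Lemma~\ref{witt_embedding} each is totally (and, as $p\nmid e$, tamely) ramified of degree $e$ over $\operatorname{Frac}(W(k^*))$. Since $(k^*)^\times$ is still $e$-divisible, Lemma~\ref{e-divisible} forces $K^\circ\cong (K')^\circ$ as valued fields; hence their $m$-th residue rings agree for all $m$, and by Lemma~\ref{coarse_valuation}.(2) so do those of $K^*$ and $(K')^*$. Combined with $\Gamma\equiv\Gamma'$, arguing as in the proof of Theorem~\ref{fined_AKE_Rn} (equivalently, applying it with $R_{n_0}\equiv R_{n_0}'$) yields $K\equiv K'$, so $\lambda(T)=1$.

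\emph{Part (2).} For the upper bound $\lambda(T)\le e+1$ I would quote Corollary~\ref{basarab_question_perfectfields} (using $e_\nu(e)=0$ since $p\nmid e$), or its first part together with Corollary~\ref{tame optimal cor}. For $\lambda(T)>e$ I would exhibit a henselian valued field $(K',\nu',\Gamma')$ of ramification index $e$ with $\Gamma'\equiv\Gamma$ and $R_e'\equiv R_e$ (the $e$-th residue ring of $K$) but $K'\not\equiv K$. Since $k$ satisfies the hypothesis of Lemma~\ref{tame distinct}, that lemma provides $L_1=\operatorname{Frac}(W(k))(\sqrt[e]{p})$ and $L_2=\operatorname{Frac}(W(k))(\sqrt[e]{p\zeta_{l^n}})$, totally ramified of degree $e$ over $\operatorname{Frac}(W(k))$ and not isomorphic over $\BQ$; I view each as a complete discrete valued field with value group $\BZ$, residue field $k$, ramification index $e$, and note $R_{L_1,e}\cong R_{L_2,e}\cong k[x]/(x^e)$ by Proposition~\ref{tame optimal pro} and its proof. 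The heart of the argument is to show $L_1\not\equiv L_2$: if $L_1\equiv L_2$, all their $m$-th residue rings are elementarily equivalent, so by Proposition~\ref{iso_Rn_Gamma} there are $\aleph_1$-saturated elementary extensions of $L_1$ and $L_2$ with a common enlarged residue field $k^*$ and isomorphic $m$-th residue rings for all $m$; by Lemma~\ref{coarse_valuation} and the degree count, their core fields are exactly $\operatorname{Frac}(W(k^*))(\sqrt[e]{p})$ and $\operatorname{Frac}(W(k^*))(\sqrt[e]{p\zeta_{l^n}})$, which by Theorem~\ref{main_theorem_construction} (at level $n_0$) would be isomorphic as valued fields, hence as fields --- contradicting Lemma~\ref{tame distinct} over $k^*$, whose hypothesis persists because $\zeta_{l^n}\in k\subseteq k^*$ and ``$\zeta_{l^{n+1}}$ exists'' is first-order. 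Granting $L_1\not\equiv L_2$, one of them, say $L_j$, is not elementarily equivalent to $K$; then $(K',\nu')=L_j$ has $\Gamma'=\BZ\equiv\Gamma$ (as $\Gamma$ is a $\BZ$-group) and $R_e'=R_{L_j,e}\cong k[x]/(x^e)\equiv R_e$ (computing $R_e$ via a saturated elementary extension of $K$, its core field, Lemma~\ref{coarse_valuation}.(2), and Proposition~\ref{tame optimal pro}), while $K'\not\equiv K$. Hence $\lambda(T)>e$, and with the upper bound $\lambda(T)=e+1$.

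I expect the main obstacle to be the claim $L_1\not\equiv L_2$: the two fields are built precisely so that their $e$-th residue rings coincide, so they cannot be separated at the truncation level where $\lambda(T)$ would like to operate, and proving they are nonetheless not elementarily equivalent requires the full saturation/coarsening reduction, the careful identification of the resulting core fields as explicit radical extensions of $\operatorname{Frac}(W(k^*))$ of degree exactly $e$, and then feeding this back into the non-isomorphism statement of Lemma~\ref{tame distinct} over the enlarged residue field $k^*$.
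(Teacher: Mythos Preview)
Your Part~(1) is essentially the paper's argument verbatim: pass to $\aleph_1$-saturated extensions with isomorphic residue fields and value groups, identify the core fields as degree-$e$ totally tamely ramified extensions of $\operatorname{Frac}(W(k^*))$, apply Lemma~\ref{e-divisible}, and finish via the proof of Theorem~\ref{fined_AKE_Rn}.

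For Part~(2) your argument is correct but takes a heavier route for the lower bound than the paper does. You establish $L_1\not\equiv L_2$ by re-running the full saturation/coarsening/lifting machinery and then invoking Lemma~\ref{tame distinct} over the enlarged residue field $k^*$. The paper instead argues syntactically: it sets $T_0=T_{p,e}\cup\Th(\Gamma)\cup\Th(R_e)$ and writes down the explicit sentences $\exists x\,(x^e-p=0)$ and $\exists x\,\exists y\,(x^e-py=0\ \wedge\ \Phi_{l^n}(y)=0)$, observes that $L_1$ and $L_2$ model $T_0$ together with the first and second sentence respectively, and notes that the proof of Lemma~\ref{tame distinct} shows these two sentences are jointly inconsistent over $T_0$ (a common model would contain $\zeta_{l^{n+1}}$ in its residue field). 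Hence $T_0$ has at least two complete extensions, and since $K\models T_0$ one of them differs from $\Th(K)$. The obstacle you flagged --- proving $L_1\not\equiv L_2$ --- thus dissolves once one notices that $\exists x\,(x^e=p)$ is a single first-order sentence separating them; no second pass through coarsening or Theorem~\ref{main_theorem_construction} is needed. Your approach has the virtue of being methodologically uniform with Part~(1) and with the upper-bound machinery, but the paper's syntactic shortcut is considerably lighter. (Incidentally, your computation $R_e\equiv k[x]/(x^e)$ via the core field is also more than required: since $p=0$ in $R_e$ and $k$ is perfect, the Teichm\"uller section gives $R_e\cong k[x]/(x^e)$ directly.)
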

\begin{proof}
(1)
 Suppose $k^{\times}$ is $e$-divisible. Let $(K',\nu',\Gamma',k')$ be a henselian valued field of mixed characteristic having ramification index $e$. Suppose $k\equiv k'$ and $\Gamma\equiv \Gamma'$. By Remark \ref{iso_Rn_Gamma}, we may assume that $k\cong k'$, $\Gamma\cong \Gamma'$, and both $K$ and $K'$ are $\aleph_1$-saturated. Consider the core fields $(K^{\circ},\nu^{\circ},k^{\circ})$ and $( (K')^{\circ} ,(\nu')^{\circ},(k')^{\circ} )$ of $(K,\nu)$ and $(K',\nu')$ respectively. Since $k^{\times}$ is $e$-divisible, so is $(k^{\circ})^{\times}$. Then by Lemma \ref{e-divisible}, $(K^{\circ},\nu^{\circ}) \cong ((K')^{\circ},(\nu')^{\circ})$. By the proof of Theorem \ref{fined_AKE_Rn}, we have $(K,\nu)\equiv (K',\nu')$. Thus $\lambda(T)=1$.
\smallskip

(2)
 Suppose there is a prime divisor $l$ of $e$ and a natural number $n$ such that $\zeta_{l^n}\in k^{\times}$ and $\zeta_{l^{n+1}}\notin k^{\times}$. Let $T_{p,e}$ be the theory introduced in the proof of Corollary \ref{decidability_ramified_valuedfield}. Set $T_0=T_{p,e}\cup \Th(R_{e})$. Consider the following theories:
\begin{itemize}
	\item $T_1=T_0\cup \{\exists x( x^e-p=0)\}$;
	\item $T_2=T_0\cup\left\{\exists xy\big((x^e-py=0)\wedge \Phi_{l^n}(y)=0\big)\right\}$,
\end{itemize}
 where $\Phi_{l^n}(X)\in \BZ[X]$ is the $l^n$-th cyclotomic polynomial. By the proof of Lemma \ref{tame distinct}, we have
\begin{itemize}
	\item $T_1\cup T_2$ is inconsistent;
	\item $T_1$ and $T_2$ are consistent.
\end{itemize}
So, there are at least two different complete theories containing $T_0$, and we have $\lambda(T)\ge e+1$. By Corollary \ref{basarab_question_perfectfields}, we conclude that $\lambda(T)= e+1$.
\end{proof}
\noindent For some wild cases, we have a lower bound for $\lambda(T)$.
\begin{proposition}\label{lower_bound_lambda(T)_wild}
Let $p$ be a prime number and let $e$ be a positive integer divisible by $p$.
Let $(K,\nu,\Gamma,k)$ be a finitely ramified henselian valued field of mixed characteristic $(0,p)$ with a perfect residue field having the ramification index $e\ge 2$.
Then $\lambda(T)\ge e+1$ for the theory $T$ of $(K,\nu)$.
\end{proposition}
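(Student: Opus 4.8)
The plan is to adapt the proof of Proposition~\ref{lambda(T)_tame}.(2) to the wild case, using Lemma~\ref{wild distinct} in place of Lemma~\ref{tame distinct}. The first step is to pin down the residue rings of $(K,\nu)$: because the absolute ramification index is $e$, the convex subgroup of $\Gamma$ generated by the least positive element $\gamma_0$ is archimedean and discretely ordered, hence $\cong\BZ$, and it contains $\nu(p)$; so $\nu(p)=e\gamma_0$ and $\fm^e=pR$, where $R,\fm$ are the valuation ring and maximal ideal of $(K,\nu)$. Consequently, for $1\le n\le e$ the $n$-th residue ring $R_n=R/\fm^n$ is an Artinian local ring of equal characteristic $p$ with perfect residue field $k$ and principal maximal ideal of nilpotency degree $n$, so by the Cohen structure theorem $R_n\cong k[x]/(x^n)$; in particular $\Th(R_n)$ depends only on $k$ and $n$.

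Next I would apply Lemma~\ref{wild distinct} with this residue field $k$: writing $K_0$ for the fraction field of $W(k)$, it produces two non-isomorphic totally ramified degree-$e$ extensions $L_1,L_2$ of $K_0$, with $L_1=K_0(p^{1/e})$. Each $L_i$, complete for its (unique) extension of the valuation, is a henselian valued field of mixed characteristic with ramification index $e$, residue field $k$ and value group $\cong\BZ$; by Proposition~\ref{tame optimal pro} its $n$-th residue ring is $\cong k[x]/(x^n)$ for $n\le e$, hence isomorphic to the $n$-th residue ring of $(K,\nu)$, and its value group is elementarily equivalent to $\Gamma$ since $\Gamma$ is a $\BZ$-group. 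Clearly $p$ has an $e$-th root in $L_1$. The one genuinely new verification is that $p$ has no $e$-th root in $L_2$: such a root $x$ would be a uniformizer of $L_2$ (its value being $\nu_{L_2}(p)/e$, the least positive value), with minimal polynomial over $K_0$ equal to the Eisenstein polynomial $X^e-p$; then $[K_0(x):K_0]=e=[L_2:K_0]$ would force $L_2=K_0(x)\cong K_0[X]/(X^e-p)=L_1$, contradicting Lemma~\ref{wild distinct}.

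Finally I would conclude as follows. Let $T_{p,e}$ be the theory from the proof of Corollary~\ref{decidability_ramified_valuedfield}, set $T_0=T_{p,e}\cup\Th(\Gamma)\cup\Th(R_e)$ (so that $(K,\nu)\models T_0$, and $L_1,L_2\models T_0$ by the preceding paragraph), and put
$$
T_1=T_0\cup\{\exists x\,(x^e-p=0)\},\qquad T_2=T_0\cup\{\neg\exists x\,(x^e-p=0)\};
$$
these are mutually inconsistent, and $L_1\models T_1$, $L_2\models T_2$, so both are consistent. Suppose for contradiction that $\lambda(T)\le e$. Then by Definition~\ref{elementary_bound} every henselian valued field $(K',\nu')$ of mixed characteristic with ramification index $e$ such that $\Gamma'\equiv\Gamma$ and the $\lambda(T)$-th residue ring of $(K',\nu')$ is elementarily equivalent to that of $(K,\nu)$ is a model of $T$; both $L_1$ and $L_2$ qualify, since their $\lambda(T)$-th residue rings are $\cong k[x]/(x^{\lambda(T)})\cong$ the $\lambda(T)$-th residue ring of $(K,\nu)$. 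Hence $L_1\models T$ and $L_2\models T$, so $L_1\equiv L_2$, contradicting $L_1\models\exists x(x^e-p=0)$ and $L_2\models\neg\exists x(x^e-p=0)$. Therefore $\lambda(T)\ge e+1$.

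I do not expect any single step to be a serious obstacle; if anything the wild case is structurally simpler here than the tame one, because $T_1$ and $T_2$ are cut out by a sentence and its negation rather than by a three-way alternative, and because $L_1,L_2$ are literally complete discrete valued fields, so no coarsening is needed. The only point demanding care is checking that each $L_i$ simultaneously satisfies $T_{p,e}$, $\Th(\Gamma)$ and $\Th(R_e)$ — in particular the identification $R_n\cong k[x]/(x^n)$ through the Cohen structure theorem, which is what makes $\Th(R_e)$ insensitive to the choice of the totally ramified extension.
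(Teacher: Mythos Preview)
Your proof is correct and follows essentially the same strategy as the paper's: set $T_0=T_{p,e}\cup\Th(\Gamma)\cup\Th(R_e)$, exhibit two non-elementarily-equivalent models of $T_0$ coming from Lemma~\ref{wild distinct}, and conclude $\lambda(T)>e$. The differences are minor but all in your favour. First, you use a clean two-way split $T_1=T_0\cup\{\exists x(x^e-p=0)\}$ versus its negation, whereas the paper writes down three theories $T_1,T_2,T_3$ mirroring the specific construction of $L_2=K_0(p^{1/s},\alpha)$; only two are actually needed for the lower bound, so your version is tidier. Second, you make explicit a point the paper leaves implicit: that the $e$-th residue ring of the given henselian $(K,\nu)$ really is $k[x]/(x^e)$, which you obtain via Cohen's structure theorem rather than by appealing to Proposition~\ref{tame optimal pro} (which is stated only for complete DVRs). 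Third, your direct verification that $L_2$ has no $e$-th root of $p$---because such a root would force $L_2\cong K_0[X]/(X^e-p)\cong L_1$ over $K_0$, contradicting Lemma~\ref{wild distinct}---is a clean replacement for the paper's appeal to ``the proof of Lemma~\ref{wild distinct}'' to get inconsistency of $T_1\cup T_2$.
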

\begin{proof}
The proof is similar to the proof of Proposition \ref{lambda(T)_tame}.
Let $T_{p,e}$ and $T_0$ be the theory introduced in the proof of Proposition \ref{lambda(T)_tame}.
We write $e=sp^r$ for positive integers $s$ and $r$ where $s$ is prime to $p$.
Let $\alpha\in \BQ^{alg}$ be  as in the proof of Lemma \ref{wild distinct}.
In particular, $\alpha$ is a uniformizer of $M_r$ corresponding to the place above $p$ where
$M_r=\BQ(\alpha)$ is the $r$-th subfield of the cyclotomic $\BZ_p$-extension $\BQ_{\infty}$ of degree $p^r$ over $\BQ$.
Let $f(X)$ be the minimal polynomial of $\alpha$ over $\BQ$. Consider the following theories:
\begin{itemize}
	\item $T_1=T_0\cup\{\exists x(x^e-p=0)\}$;
	\item $T_2=T_0\cup\{\exists x(x^s-p=0),\ \exists x(f(x)=0)\}$.
\end{itemize}
By the proof of Lemma \ref{wild distinct}, we have
\begin{itemize}
	\item $T_1\cup T_2$ is not consistent;
	\item $T_1$ and $T_2$ are consistent.
\end{itemize}
So, there are at least two different complete theories containing $T_0$, we have $\lambda(T)\ge e+1$.
\end{proof}
\noindent
We list some special cases of Proposition \ref{lambda(T)_tame} and Proposition \ref{lower_bound_lambda(T)_wild} (see Corollary \ref{cor:explicit_value_basarab_ivariants}).
For a positive integer $s$, we say that $s^{\infty}$ divides $[k:\BF_p]$ if there is a subfield $k_n$ of $k$ such that $[k_n:\BF_p]$ is finite and $s^n$ divides $[k_n:\BF_p]$ for each $n\ge 1$.
For $m\ge 1$, let $\mu_m$ be the group generated by $\zeta_m$ and let $\mu_{m^{\infty}}=\bigcup_{n\ge 1}\mu_{m^n}$.
\begin{remark}\label{rem:criterion_for_e-divisibility}
Let $k$ be an algebraic extension of $\mathbb{F}_p$. Let $e>1$ be coprime to $p$, and let $s$ be the order of the group $\mu_e\cap k^{\times}$. Suppose $s^{\infty}$ divides $[k:\mathbb F_p]$. Then, $k^{\times}$ is $e$-divisible.
\end{remark}
\begin{proof}
Note that $(k^{alg})^{\times}\cong \oplus \mu_{q^{\infty}}$ where $q$ runs through all primes not equal to $p$. To show that $k^{\times}$ is $e$-divisible, it is enough to show that $k^{\times}$ is $r$-divisible for each prime factor $r$ of $e$. \\

Case $r\nmid s$. $k^{\times}$ is contained in $\oplus_{q\neq p,r} \mu_{q^{\infty}}$. Since $\mu_{q^n}$ is $r$-divisible for each $q\neq r$, $k^{\times}$ is $r$-divisible.\\

Case $r \mid s$. Note that $r^{\infty}$ divides $[k:\mathbb F_p]$ because $s^{\infty}$ divides $[k:\mathbb F_p]$. It is enough to show that $\mu_{r^{\infty}}\subset k^{\times}$.
Clearly, we have that $\zeta_r\in k$. By Kummer theory, for any positive integer $n$, we have $[\mathbb F_p(\zeta_{r^{n+1}}):\mathbb F_p]=r^{d_n}[\mathbb F_p(\zeta_r):\mathbb F_p]$ for some $d_n\le n$. Since $r^{\infty}$ divides $[k:\mathbb F_p]$, there is a subfield $k_{r,n}$ of $k$ with $[k_{r,n}:\mathbb F_p]=r^n$ so that $[k_{r,n}(\zeta_{r}):\mathbb F_p]=r^n[\mathbb F_p(\zeta_r):\mathbb F_p]$. So, $\mathbb F_p(\zeta_{r^{n+1}})\subset k_{r,n}(\zeta_{r})\subset k$. Therefore, we conclude that $\mu_{r^{\infty}}\subset k$.
\end{proof}

\begin{corollary}\label{cor:explicit_value_basarab_ivariants}

Let $(K,\nu,\Gamma,k)$ be a finitely ramified henselian valued field of mixed characteristic $(0,p)$ with a perfect residue field. Let $e$ be the ramification index of $K$ and let $s$ be the order of the group $\mu_e \cap k^{\times}$ where $\mu_e$ is the group generated by $\zeta_e$. For the theory $T$ of $(K,\nu)$,

Case $p\nmid e$.
\begin{itemize}
\item
$\lambda(T)=1$ when $k=k^{alg}$;
\item
$\lambda(T)=1$ when $K$ is a subfield of $\BC_p$ and
$s^\infty$ divides $[k:\BF_p]$;
\item
$\lambda(T)=e+1$ when $K$ is a subfield of $\BC_p$ and $s^\infty$ does not divide $[k:\BF_p]$.
\end{itemize}

Case $p|e$.
\begin{itemize}
\item $\lambda(T)\ge e+1$ when $K$ is a subfield of $\BC_p$.
\end{itemize}

\end{corollary}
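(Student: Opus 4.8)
The plan is to obtain the four assertions from Proposition~\ref{lambda(T)_tame} (in the tame case $p\nmid e$) and Proposition~\ref{lower_bound_lambda(T)_wild} (in the wild case $p\mid e$), after recording two consequences of the hypothesis $K\subseteq\BC_p$. First, the value group of $\BC_p$ is $\BQ$, and any subgroup of $\BQ$ possessing a least positive element is infinite cyclic; since $(K,\nu)$ is finitely ramified its value group $\Gamma$ has a least positive element, so $\Gamma\cong\BZ$ and in particular $\Gamma$ is a $\BZ$-group. Second, the residue field of $\BC_p$ is $\BF_p^{alg}$, so $k$ is a subfield of $\BF_p^{alg}$; thus $k$ is automatically perfect and is determined by the divisor-closed, lcm-closed set $N(k):=\{\,n\ge 1:\BF_{p^n}\subseteq k\,\}$. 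With these in hand the wild case is immediate: when $p\mid e$ one has $e\ge p\ge 2$, $k$ is perfect and $\Gamma$ is a $\BZ$-group, so Proposition~\ref{lower_bound_lambda(T)_wild} gives $\lambda(T)\ge e+1$.

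For $p\nmid e$ we use Proposition~\ref{lambda(T)_tame}, so it suffices to determine whether $k^{\times}$ is $e$-divisible, in which case part~(1) yields $\lambda(T)=1$, and otherwise to exhibit a prime divisor $l$ of $e$ and an $n$ with $\zeta_{l^n}\in k^{\times}$, $\zeta_{l^{n+1}}\notin k^{\times}$, in which case part~(2) (using that $\Gamma$ is a $\BZ$-group) yields $\lambda(T)=e+1$. If $k=k^{alg}$ then $k^{\times}$ is divisible, hence $e$-divisible, which settles the first bullet without invoking $K\subseteq\BC_p$. The last two bullets will both follow, once $K\subseteq\BC_p$ so that $k\subseteq\BF_p^{alg}$, from the number-theoretic \emph{Lemma}: for $s:=|\mu_e\cap k^{\times}|$, the group $k^{\times}$ is $e$-divisible if and only if $s^{\infty}$ divides $[k:\BF_p]$; the second bullet is the ``if'' direction, and the third is the contrapositive of ``only if'' together with the fact, established en route, that the failure of $e$-divisibility produces precisely the prime $l$ needed in Proposition~\ref{lambda(T)_tame}.(2).

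To prove the Lemma I would reduce $e$-divisibility of $k^{\times}$ to $l^{a}$-divisibility for each prime power $l^{a}$ exactly dividing $e$, since in an abelian group an element is an $e$-th power precisely when it is an $l^{a}$-th power for every such $l^{a}$. For a prime $l\nmid s$, the equality $\mu_e\cap k^{\times}=\mu_s$ forces $\zeta_l\notin k$, hence $l\nmid p^{n}-1$ for all $n\in N(k)$, so the $l^{a}$-th power map is bijective on each $\BF_{p^{n}}^{\times}$ and thus on $k^{\times}$. For a prime $l\mid s$, put $d=\operatorname{ord}_l(p)$ and $c=v_l(p^{d}-1)\ge 1$; lifting the exponent gives $v_l(p^{m}-1)=c+v_l(m)$ for every $m\in N(k)$ divisible by $d$ (with the usual separate treatment of $l=2$), and an element $x\in\BF_{p^{n}}^{\times}$ admits an $l^{a}$-th root in some $\BF_{p^{m}}^{\times}$ with $n\mid m\in N(k)$ exactly when $v_l(\operatorname{ord}x)+a\le v_l(p^{m}-1)$. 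Choosing $x$ of maximal $l$-power order in $\BF_{p^{n}}^{\times}$, this reduces to finding $m\in N(k)$ with $n\mid m$ and $v_l(m)\ge v_l(n)+a$, which can be done for every $n$ if and only if $\sup_{n\in N(k)}v_l(n)=\infty$, using lcm-closure of $N(k)$. Assembling the primes, $k^{\times}$ is $e$-divisible iff $\sup_{n\in N(k)}v_l(n)=\infty$ for every prime $l\mid s$, and this last condition is exactly the assertion that $s^{\infty}$ divides $[k:\BF_p]$. Finally, if $s^{\infty}$ does not divide $[k:\BF_p]$, there is a prime $l\mid s$ with $\sup_{n\in N(k)}v_l(n)<\infty$; then $\zeta_l\in k$ while $\mu_{l^{\infty}}\not\subseteq k$, and since $\{\,j\ge 0:\zeta_{l^{j}}\in k\,\}$ is downward closed it equals $\{0,1,\dots,n\}$ for some $n\ge 1$, giving the configuration required by Proposition~\ref{lambda(T)_tame}.(2) since also $l\mid s$ divides $e$.

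The main obstacle is the Lemma, and inside it the bookkeeping with $l$-adic valuations of $p^{m}-1$ (lifting the exponent, with $l=2$ handled separately) together with the clean verification that the ad hoc notion ``$s^{\infty}$ divides $[k:\BF_p]$'' coincides with ``$\sup_{n\in N(k)}v_l(n)=\infty$ for every prime $l\mid s$'' --- a purely combinatorial statement about the divisor- and lcm-closed set $N(k)$ that must be isolated and applied carefully.
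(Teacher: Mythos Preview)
Your approach is exactly the one the paper intends: the corollary is stated there without proof, merely as ``special cases of Proposition~\ref{lambda(T)_tame} and Proposition~\ref{lower_bound_lambda(T)_wild}'', and you have correctly supplied the missing reductions. The observations that $K\subseteq\BC_p$ forces $\Gamma\cong\BZ$ and $k\subseteq\BF_p^{alg}$, and the number-theoretic Lemma identifying $e$-divisibility of $k^{\times}$ with the condition ``$s^{\infty}$ divides $[k:\BF_p]$'', are precisely the verifications the paper leaves implicit; your argument via the primes $l\mid s$ versus $l\nmid s$, lifting-the-exponent, and lcm-closure of $N(k)$ is sound (the minor imprecision in the ``iff'' for $l^{a}$-th roots when $v_l(\operatorname{ord}x)=0$ is harmless, since you immediately restrict to $x$ of maximal $l$-power order).
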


\noindent Propositon \ref{lambda(T)_tame}.(1) shows that Basarab's invariant $\lambda(T)$ can be strictly smaller than the bound in Corollary \ref{basarab_question_perfectfields} for the tame case. In the following example, the same thing can happen for the wild case.
\begin{example}

Let $(K,R,\nu)=(\BQ_3(\sqrt[3]3), \BZ_3[\sqrt[3]3], \nu)$, $f(x)=x^3-3$ and $\alpha_1=\sqrt[3]3$, $\alpha_2= \sqrt[3]3\zeta_3$, and $\alpha_3=\sqrt[3]3\zeta_3 ^2 $.
 Since  $f(x)=(x-\sqrt[3]{3})(x-\sqrt[3]{3}\zeta_3)(x-\sqrt[3]{3}\zeta_3 ^2)=(x-\alpha_1)(x-\alpha_2)(x-\alpha_3)$ and $[\BQ_3(\sqrt[3]{3}, \zeta_3 ) :\BQ_3(\sqrt[3]{3})]=2$,
$$
\frac{x^3-3}{x-\sqrt[3]{3}}=\left(x-\sqrt[3]{3}\zeta_3\right)\left(x-\sqrt[3]{3}\zeta_3 ^2\right)=\left(x-\alpha_2\right)\left(x-\alpha_3\right)
$$
is irreducible over $\BQ_3(\sqrt[3]{3})$, that is, $\alpha_2$ and $\alpha_3$ are conjugate each other over $\BQ_3(\sqrt[3]3)$.
It follows that $\widetilde{\nu}(\alpha_1-\alpha_2)=\widetilde{\nu}(\alpha_1-\alpha_3)$.
By {\emph Fact \ref{different bound}},
$$
\widetilde{\nu}(f'(\alpha_1))=\widetilde{\nu}((\alpha_1-\alpha_2)(\alpha_1-\alpha_3))=2 \widetilde{\nu}(\alpha_1-\alpha_2)\le \frac{\nu(3)-1+\nu\big(\nu(3)\big)}{\nu(3)}.
$$
 Hence we have the following bound
\begin{align*}
M(R)& = \max\left\{\wi{\nu}\big(\alpha_1- \alpha_j \big): j\neq 1 \right\}\\
&=\widetilde{\nu}(\alpha_1-\alpha_2)\ =\ \widetilde{\nu}(\alpha_1-\alpha_3)\ =\ \frac{\widetilde{\nu}(f'(\alpha_1))}{2}\\
&\le \frac{\nu(3)-1+\nu\big(\nu(3)\big)}{2\nu(3)}\ =\ \frac{3-1+\nu(3)}{6}\\
&=\frac{5}{6}.
\end{align*}
 So we have
$$
M(R)\nu(3)^2\le\frac{5}{6} 3^2 =\frac{15}{2} \le8 < \nu(3)+\nu(3)\nu(\nu(3))=3+3\nu(3)=12.
$$
Thus, \emph{Theorem \ref{main_theorem_construction}} shows that Basarab's invariant $\lambda(T)$ for $K$ is smaller than or equal to $8$, which is strictly smaller than $\nu(3)(1+\nu(\nu(3)))+1=12$.
\end{example}

\end{document}